\newtheorem{thm}{Theorem}
\newtheorem{prop}{Proposition}[section]
\newtheorem{cor}[prop]{Corollary}
\newtheorem{lem}[prop]{Lemma}
\newtheorem{prob}[prop]{Problem}
\newtheorem{defn}[prop]{Definition}
\newtheorem{rem}[prop]{Remark}
\newtheorem{ex}[prop]{Example}
\newcommand{\osymm}{\origo-symmetric }
\renewcommand{\Re}{\mathbb R}
\renewcommand{\epsilon}{\varepsilon}
\newcommand{\B}{\mathbf{B}}
\newcommand{\BB}{\mathcal{B}}
\newcommand{\GG}{\mathcal{G}}
\newcommand{\TT}{\mathcal{T}}
\newcommand{\KK}{\mathcal{K}}
\newcommand{\remark}[1]{}
\newcommand{\Ze}{\mathbb Z}
\newcommand{\origo}{\ensuremath o}
\newcommand{\Bp}[1][C]{\B_{#1}^{+}}
\newcommand{\Bm}[1][C]{\B_{#1}^{-}}
\newcommand{\BC}[1][C]{\B_{#1}}                            
\newcommand{\bconv}[2][C]{{\mathrm{conv}}_{#1}^{b}\left(#2 \right)}
\newcommand{\bconvures}[1][C]{{\mathrm{conv}}_{#1}^{b}}
\newcommand{\sconv}[2][C]{{\mathrm{conv}}_{#1}^{s}\left(#2 \right)}
\newcommand{\Ren}{\Re^n}
\newcommand{\FF}{\mathcal{F}}
\newcommand{\D}[1][C]{\mathcal{D}({#1})}
\newcommand{\DT}[1][C]{\tilde{\mathcal D}({#1})}
\DeclareMathOperator{\inter}{int}
\DeclareMathOperator{\relint}{relint}
\DeclareMathOperator{\bd}{bd}
\DeclareMathOperator{\conv}{conv}
\DeclareMathOperator{\card}{card}
\DeclareMathOperator{\cl}{cl}
\DeclareMathOperator{\proj}{proj}
\DeclareMathOperator{\diam}{diam}
\DeclareMathOperator{\Car}{Car}
\DeclareMathOperator{\arclength}{\rho}
\DeclareMathOperator{\perim}{perim}
\DeclareMathOperator{\aff}{aff}
\begin{document}

%
%
%
%
%
%
%
%
%

\title{Ball and Spindle Convexity with respect to a Convex Body}

\author{Zsolt L\'angi}
\address{Dept. of Geometry, Budapest University of Technology and Economics,\\
Egry J\'ozsef u. 1,\\
Budapest, Hungary, 1111}
\email{zlangi@math.bme.hu}

\author{M\'arton Nasz\'odi}
\address{Dept. of Geometry, E\"otv\"os University,\\
P\'azm\'any P\'eter s. 1/c,\\
Budapest, Hungary, 1117}
\email{nmarci@math.elte.hu}

\author{Istv\'an Talata}
\address{Dept. of Geometry and Informatics, Ybl Faculty of Architecture and Civil Engineering, Szent Istv\'an University,\\
Th\"ok\"oly \'ut 74,\\
Budapest, Hungary, 1146}
\email{talata.istvan@ybl.szie.hu}

\thanks{The first named author was supported by the J\'anos Bolyai Research Scholarship of the Hungarian Academy of Sciences.\\
The second named author was partially supported by the Hung. Nat. Sci. Found. (OTKA) grants: K72537 and PD104744.\\
The third named author was partially supported by the Hung. Nat. Sci. Found. (OTKA) grant no. K68398.}

\subjclass{52A30; 52A35; 52C17}

\keywords{ball convexity, spindle convexity, ball-polyhedron, separation, Carath\'eodory's theorem,
convexity structure, illumination, arc-distance}


\begin{abstract}
Let $C\subset\mathbb{R}^n$ be a convex body. 
We introduce two notions of convexity associated to C.
A set $K$ is $C$-\emph{ball convex} if it is the intersection of  translates of $C$, or it is either $\emptyset$, or $\mathbb{R}^n$.
The $C$-ball convex hull of two points is called a $C$-spindle.
$K$ is $C$-\emph{spindle convex} if it contains the $C$-spindle of any pair of its points.
We investigate how some fundamental properties of conventional convex 
sets can be adapted to $C$-spindle convex and $C$-ball convex sets.
We study separation properties and Carath\'eodory numbers of these
two convexity structures.
We investigate the basic properties of arc-distance, a quantity 
defined by a centrally symmetric planar disc $C$, which is the length 
of an arc of a translate of $C$, measured in the $C$-norm, that 
connects two points.
Then we characterize those $n$-dimensional convex bodies $C$ for which every $C$-ball 
convex set is the $C$-ball convex hull of finitely many points.
Finally, we obtain a stability result concerning covering numbers of some $C$-ball convex sets, 
and diametrically maximal sets in $n$-dimensional Minkowski spaces.
\end{abstract}

\maketitle

\section{Introduction}\label{sec:intro}

Closed convex sets may be introduced in two distinct manners: either 
as intersections of half-spaces, or as closed sets which contain the 
line segments connecting any pair of their points.
We develop these approaches further to obtain the notions of \emph{ball convexity} and
\emph{spindle convexity} with respect to a convex body. Let $C$ be a convex body (a compact convex set with non-empty interior)
in Euclidean $n$-space, $\Ren$.

\begin{defn}
A set $K \subset \Ren$ is called \emph{ball convex} with respect to $C$ (shortly, \emph{$C$-ball convex}),
if it is either $\emptyset$, or $\Re^n$, or the intersection of all translates of $C$ that contain $K$.
\end{defn}

\begin{defn}\label{defn:C-spindle}
Consider two (not necessarily distinct) points $p, q \in \Re^n$ such that there is a translate of $C$ that contains both $p$ and $q$.
Then the \emph{$C$-spindle} (denoted as $[p,q]_C$) of $p$ and $q$ is the intersection of all translates of $C$
that contain $p$ and $q$. If no translate of $C$ contains $p$ and $q$, we set $[p,q]_C = \Re^n$.
We call a set $K \subset \Re^n$ \emph{spindle convex} with respect to $C$ (shortly, \emph{$C$-spindle convex}),
if for any $p,q \in K$, we have $[p,q]_C \subset K$.
\end{defn}

In this paper, we study the geometric properties of ball and spindle convex sets depending on $C$,
and how these two families are related to each other.

In 1935, Mayer \cite{M35} defined the notion of ``\"Uberkonvexit\"at''. 
His definition coincides with our spindle convexity with the additional assumption that $C$ is smooth and strictly convex.
Attributing the concept to \cite{Mei11}, Valentine (p.99 of \cite{Val64}) uses the term ``$r$-convex'' for ball convex 
sets in a Minkowski space (where the radius of the intersecting balls is $r$). 
For other generalized notions of convexity related to Minkowski spaces, see \cite{MaSw2}.

Intersections of (finitely many) Euclidean unit balls on the plane
were studied by Bieberbach \cite{B55}, and in three dimensions by Heppes \cite{He56},
Heppes and R\'ev\'esz \cite{HR56}, Straszewitcz \cite{Str57} and Gr\"unbaum \cite{Gr57}.
Recent developments have led to further investigations of sets that are ball convex
with respect to the Euclidean unit ball $B^n$.
For these results, the reader is referred to \cite{BCCs06}, \cite{BN06}, \cite{B70}, \cite{KMP10}, \cite{KMW96}, \cite{KMP05} and \cite{AP95}.
 A systematic investigation of these sets was started by Bezdek et al. \cite{BLNP07}:
the authors examined how fundamental properties of convex sets can
be transferred to sets that are ball convex with respect to $B^n$;
in particular, they gave analogues of Kirchberger's and  Carath\'eodory's theorems, examined separation
properties of ball convex sets and variants of the Kneser-Poulsen conjecture.
It is also shown there that the notions of ball and spindle convexity coincide when $C=B^n$.
In the spirit of the results in \cite{BLNP07}, in \cite{LN08} generalizations of 
Kirchberger's theorem are proved regarding separation of finite point sets by homothets or 
similar copies of a given convex region.
Using ball-polyhedra, the authors of \cite{KMP10} give a
characterization of finite sets in Euclidean 3-space 
for which the diameter of the set is attained a maximal number of times.
The notion of $C$-ball convex hull (cf. Definition~\ref{defn:spindleconvexhull})
was defined for centrally symmetric plane convex bodies by Martini and Spirova \cite{MS09}.

In Section \ref{sec:notation}, we introduce our notation and basic concepts.
In Section~\ref{sec:separation}, we examine how the notions of ball 
and spindle convexity are related to each other and how sets are separated by translates of $C$.
In Section~\ref{sec:arcdistance}, we define the arc-distance between
two points with respect to a planar disc $C$, and examine
when the triangle inequality holds and when it fails.
In Section~\ref{sec:caratheodory}, we introduce and examine the 
Carath\'eodory numbers associated to these convexity notions.
In Section~\ref{sec:ballspindle}, we give a partial characterization of convex bodies $C$ for which 
every $C$-ball convex set is the $C$-ball convex hull of finitely many points.
Finally, in Section~\ref{sec:covering}, we prove that the operation of taking
intersections of translates of $C$ is stable in a certain sense. By applying
this result to Hadwiger's Covering Problem for certain $C$-ball convex
sets, and diametrically maximal sets in a Minkowski space, we obtain
a stability of upper bounds on covering numbers.

\section{Spindle and ball convex hull, convexity structures}\label{sec:notation}

We use the standard notation $\bd S$, $\inter S$, $\relint S$, $\aff S$, $\conv (S)$ and $\card S$ for
the \emph{boundary}, the \emph{interior}, the \emph{relative interior}, the \emph{affine hull}, the (linear) \emph{convex hull} and the \emph{cardinality}
of a set $S$ in $\Ren$. For two points $p,q \in \Re^n$, $[p,q]$ denotes the closed segment connecting $p$ and $q$.
The vectors $e_1, e_2, \ldots, e_n \in \Re^n$ denote the standard orthonormal basis of the space,
and for a point $x \in \Ren$, the coordinates with respect to this basis are $x=(x_1,x_2, \ldots, x_n)$.
The Euclidean norm of $p\in \Re^n$ is denoted by $|p|$. 
As usual, $\alpha A+\beta B$ denotes the Minkowski combination of sets 
$A,B\subset\Ren$ with coefficients $\alpha, \beta \in \Re$ (cf. \cite{Sch93}).
By an \emph{$n$-polytope} we mean an $n$-dimensional convex body
which is the (linear) convex hull of finitely many points.

\begin{defn}\label{defn:BB}
Let $C\subset\Ren$ be a convex body, $X \subseteq \Re^n$ a nonempty set and $r\geq0$. Then we set
\[
\Bp(X,r) = \bigcap_{v \in X} (rC+v) \quad \hbox{and} \quad \Bm(X,r) = \bigcap_{v \in X} (-rC+v)
\]
Furthermore, we set $\Bp(\emptyset,r) = \Bm(\emptyset,r) = \Re^n$. When $r$ is omitted, it is one:
$\Bp(X)=\Bp(X,1), \Bm(X)=\Bm(X,1)$. When $C=-C$ we may omit the $+/-$ signs.
\end{defn}

Note that by Definition~\ref{defn:C-spindle}, we have
\[
[p,q]_C = \Bp \Bm \left(\{ p,q\} \right).
\]

In the paper we use the following two fundamental concepts.

\begin{defn}\label{defn:spindleconvexhull}
The \emph{spindle convex hull of a set $A$ with respect to $C$} (in short, \emph{$C$-spindle convex hull}),
denoted by $\sconv{A}$, is the intersection of all sets that contain $A$ and are spindle convex with respect to $C$.
The \emph{ball convex hull of $A$ with respect to $C$} (or \emph{$C$-ball convex hull}), denoted by $\bconv{A}$,
is the intersection of all $C$-ball convex sets that contain $A$.
\end{defn}

We remark that $\bconv{A}$ is the intersection of those translates of $C$ that contain $C$, or in other words, $\bconv{A} = \Bp\Bm (A)$.

Next, we study the notions of ball and spindle convexity in the context of the theory of abstract convexity.

\begin{defn}\label{defn:convstr}
A \emph{convexity space} is a set $X$ together with a collection
$\GG \subseteq {\mathcal P}(X)$ of subsets of $X$ that satisfy

\renewcommand{\theenumi}{\roman{enumi}}
\begin{enumerate}
\item $\emptyset, X \in\GG$, and
\item $\GG$ is closed under arbitrary intersection.
\end{enumerate}
\end{defn}

Such a collection $\GG$ of subsets of $X$ is called a \emph{convexity structure} on $X$. If a third condition

(iii) $\GG$ is closed under the union of any increasing chains (with respect to inclusion)

also holds, then we call the pair $(X,\GG)$ an \emph{aligned space} (and $\GG$ an \emph{aligned space structure}). 
This is the terminology used, for example, by Sierksma \cite{Sie1984} and by Kay and Womble \cite{KW71}.

We note that in the literature (cf. van de Vel \cite{vdV93}),
if $\GG$ satisfies (i) and (ii), but does not necessarily satisfy 
(iii), then it is often referred to as a Moore family, or a closure 
system. On the other hand, `convexity space' frequently 
refers to what we call an aligned space (see \cite{K02}). Other 
terms used for an aligned space in the literature are 'domain finite 
convexity space' and 'algebraic closure system'.

The \emph{convex hull} operation associated with a convexity space $(X,\GG)$ is: 
$\conv_\GG(A)=\cap\{G\in\GG\ : A\subseteq G\}$ for any $A\subseteq X$.
The roughest convexity (resp. aligned space) structure $\GG$ which contains
a given family $\mathcal S\subseteq\mathcal P(X)$ is the \emph{convexity (resp. aligned space) structure generated by $\mathcal S$}.
This is the intersection of all convexity (resp. aligned space) structures which contain $\mathcal S$.

For a convex body $C\subseteq\Ren$, we denote the family of $C$-ball convex sets by $\BB_C$.
Clearly, $(\Ren,\BB_C)$ is a convexity space.
We note that the family of $C$-spindle convex sets is an aligned space structure, while
the family of closed $C$-spindle convex sets is a convexity structure.
Furthermore, the space of $C$-spindle convex bodies is a geometrical aligned space (an aligned space $(X,\GG)$ is called geometrical 
if $A=\bigcup \{\mathrm{conv}_{\GG}(F)\mid F\subseteq A, \card(F)\leq 2\}$ for every $A\in \GG$, see \cite{K02}).

Clearly, $\BB_C$ is the convexity space generated by the translates of $C$.
Let $\TT_C$ denote the aligned space structure generated by these translates.
In Theorem~\ref{thm:convstruct}, we compare their corresponding hull operations $\bconvures$ and $\conv_{\TT_C}$.
For the proof we need the following result of Sierksma \cite{Sie1984}.

\begin{lem}[Theorem~7 in \cite{Sie1984}]\label{lem:Sie}
Let $(X,\mathcal S)$ be a convexity space, and let $\GG$
be the aligned space structure generated by $\mathcal S$. If $A\subseteq X$, then
\[
\conv_\GG (A)=\bigcup_{k=0}^\infty \bigg[ \cup \{ \conv_{\mathcal S}(F): F\subseteq A, \card(F)\leq k\} \bigg].\]
\end{lem}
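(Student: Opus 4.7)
The plan is to prove both inclusions separately, using the abbreviation $R(A) := \bigcup \{\conv_{\mathcal S}(F) : F \subseteq A,\; F \text{ finite}\}$ for the right-hand side. The containment $R(A) \subseteq \conv_\GG(A)$ will follow from a fixed-point-style argument, while the reverse $\conv_\GG(A) \subseteq R(A)$ reduces to showing that $R(A)$ itself belongs to $\GG$.

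For the first inclusion I introduce the family $\HH := \{S \subseteq X : R(S) = S\}$ of ``$R$-fixed'' subsets. First I would verify $\mathcal S \subseteq \HH$: for $S \in \mathcal S$ the inclusion $S \subseteq R(S)$ is immediate since $x \in \conv_{\mathcal S}(\{x\}) \subseteq R(S)$ for every $x \in S$, and the reverse uses the minimality of the $\mathcal S$-hull, which forces $\conv_{\mathcal S}(F) \subseteq S$ whenever $F \subseteq S$ is finite and $S$ is $\mathcal S$-convex. Next I would show that $\HH$ is an aligned space structure: intersection-closure is a one-line chase, since for $\{S_i\} \subseteq \HH$ and finite $F \subseteq \bigcap_i S_i$ one has $\conv_{\mathcal S}(F) \subseteq S_i$ for each $i$, hence lies in the intersection; chain-union-closure rests on the elementary observation that a finite set contained in the union of a chain already sits in a single link. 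Consequently $\GG \subseteq \HH$ because $\GG$ is the smallest aligned space structure containing $\mathcal S$. Applying this to $\conv_\GG(A) \in \GG$ yields $R(\conv_\GG(A)) = \conv_\GG(A)$, and monotonicity of $R$ together with $A \subseteq \conv_\GG(A)$ closes the direction.

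For the reverse inclusion, since $R(A) \supseteq A$ is immediate, it suffices to show $R(A) \in \GG$; minimality of $\conv_\GG(A)$ will then finish. I would do this by transfinite induction on $\card(A)$. The base case is finite $A$: here $R(A) = \conv_{\mathcal S}(A) \in \mathcal S \subseteq \GG$. For $\card(A) = \kappa$ infinite, well-order $A = \{a_\alpha\}_{\alpha < \kappa}$ and set $A_\alpha := \{a_\beta : \beta < \alpha\}$; each $A_\alpha$ has strictly smaller cardinality, so $R(A_\alpha) \in \GG$ by the inductive hypothesis. The sets $\{R(A_\alpha)\}_{\alpha < \kappa}$ form an increasing chain in $\GG$, and since every finite subset of $A$ sits inside some $A_\alpha$, one obtains $R(A) = \bigcup_{\alpha < \kappa} R(A_\alpha)$, which belongs to $\GG$ by chain-union-closure.

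The main obstacle is conceptual rather than computational: the argument hinges on recognizing that the ``aligned'' axiom is precisely the tool needed to assemble the $\GG$-hull of an arbitrary set out of $\mathcal S$-hulls of finite subsets, the mechanism being the trivial but essential fact that any finite subset of a chain union sits in a single link. Once this is grasped, both directions reduce to short diagram chases.
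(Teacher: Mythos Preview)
The paper does not prove this lemma: it is quoted verbatim as Theorem~7 of Sierksma~\cite{Sie1984} and used as a black box in the proof of Theorem~\ref{thm:convstruct}. So there is no ``paper's own proof'' to compare against.

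Your argument is correct. The first inclusion is handled cleanly: the family $\HH=\{S:R(S)=S\}$ is indeed an aligned space structure containing $\mathcal S$, hence contains $\GG$, and monotonicity of $R$ finishes. For the second inclusion, the transfinite induction on $\card(A)$ works as written; the only point worth making explicit is that you are using the well-ordering of $A$ with order type equal to the \emph{cardinal} $\kappa$, so that every proper initial segment $A_\alpha$ genuinely has cardinality $|\alpha|<\kappa$ and the inductive hypothesis applies. With that noted, the chain $\{R(A_\alpha)\}_{\alpha<\kappa}$ sits in $\GG$, its union is $R(A)$ because any finite $F\subseteq A$ lands in some $A_\alpha$, and axiom~(iii) closes the argument.
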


\begin{thm}\label{thm:convstruct}
Let $C$ be a convex body in $\Ren$, and let $A\subseteq \Ren$. Assume that $\dim\bconv{A}=n$.
Then
\begin{equation}\label{eq:convstruct}
\cl\left(\conv_{\TT_C} (A)\right)=\bconv{A},
\end{equation}
that is, $\cl\left(\conv_{\TT_C} (A)\right)$ is the intersection of all translates of $C$ that contain $A$.
\end{thm}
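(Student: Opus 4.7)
My plan begins with the straightforward inclusion $\cl(\conv_{\TT_C}(A)) \subseteq \bconv{A}$: since $\TT_C$ contains every translate of $C$ and is closed under arbitrary intersection, we have $\BB_C \subseteq \TT_C$, and therefore $\conv_{\TT_C}(A) \subseteq \conv_{\BB_C}(A) = \bconv{A}$; the latter is closed, so taking closures preserves the inclusion.

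For the reverse inclusion I would invoke Lemma~\ref{lem:Sie} with $\mathcal{S} = \BB_C$ (noting that the aligned space structure generated by $\BB_C$ coincides with $\TT_C$, since $\BB_C$ is already closed under intersection), obtaining
\[
\conv_{\TT_C}(A) = \bigcup \{\bconv{F} : F \subseteq A,\ \card(F) < \infty\}.
\]
Because $\bconv{A}$ is a closed convex set of full dimension by hypothesis, $\bconv{A} = \cl(\inter \bconv{A})$, so it suffices to show $\inter \bconv{A} \subseteq \conv_{\TT_C}(A)$. To this end, fix $x \in \inter \bconv{A}$ and $\epsilon>0$ such that $\{y \in \Ren : |y-x|<\epsilon\} \subseteq \bconv{A}$. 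For every $v \in \Bm(A)$, each $y$ with $|y-x|<\epsilon$ lies in $C+v$, i.e.\ $y-v \in C$; as this holds on a full neighborhood of $x$, it forces $x - v \in \inter C$, i.e., $v \in \inter(x-C)$. Hence $\Bm(A)\subseteq \inter(x-C)$.

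Now fix any $a_0 \in A$; the family $\{\Bm(F) : F \subseteq A \text{ finite},\ a_0 \in F\}$ consists of compact convex sets (all contained in $a_0-C$), is closed under finite intersection via $\Bm(F_1)\cap\Bm(F_2)=\Bm(F_1\cup F_2)$, and intersects to $\Bm(A)$. Because $\Bm(A)$ lies in the open set $\inter(x-C)$, a standard finite-intersection-property argument for compact sets yields a finite $F$ with $\Bm(F) \subseteq x-C$; this means $x \in \Bp\Bm(F)=\bconv{F}\subseteq\conv_{\TT_C}(A)$, as required.

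The main obstacle—and the precise place where the full-dimension hypothesis is used—is the topological upgrade from $\Bm(A)\subseteq x-C$ to $\Bm(A)\subseteq \inter(x-C)$; this requires a full Euclidean neighborhood of $x$ to lie inside $\bconv{A}$, so for a point on the relative boundary of a lower-dimensional hull one would lose access to the compactness step and the argument would not go through without additional input.
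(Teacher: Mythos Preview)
Your proof is correct and follows essentially the same route as the paper's: both establish the easy inclusion directly, then use Sierksma's lemma to reduce the reverse inclusion to showing that every $x\in\inter\bconv{A}$ lies in $\bconv{F}$ for some finite $F\subseteq A$, and both deduce $\Bm(A)\subseteq x-\inter C$ from the full-dimensionality hypothesis. The only minor difference is the flavor of the final compactness step: you run a finite-intersection-property argument on the directed family of compact sets $\{\Bm(F):F\subseteq A\text{ finite},\,a_0\in F\}$ against the open set $\inter(x-C)$, whereas the paper instead uses the compactness of $-\bd C+x$ and covers it by the open complements of the sets $-C+a$, $a\in A$; these two arguments are essentially dual to one another.
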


\begin{proof}
Clearly, $\cl\left(\conv_{\TT_C} (A)\right)\subseteq\Bp\Bm(A)$.
To prove the reverse containment, assume that $x\in\inter\Bp\Bm(A)$.
By Lemma~\ref{lem:Sie}, it is sufficient to show that $x\in\Bp\Bm (F)$ for some finite $F\subseteq A$.

Since $-\inter C+x\supseteq \Bm (A)$ (see Remark~\ref{rem:bconvcheck}), we have 
$(-\bd C+x)\cap \Bm (A)=\emptyset$.
From the compactness of $-\bd C+x$, it follows that there is a
finite subset $F$ of $A$ with $(-\bd C+x)\cap \Bm (F)=\emptyset$.
Thus, $x\in \Bp\Bm (F)$.
\end{proof}

Clearly, if any $C$-spindle is $n$-dimensional, then so is the $C$-ball convex hull of any set containing more than one point.
This leads to the following observation.

\begin{rem}\label{rem:ndimensional}
If $C$ is a strictly convex body in $\Ren$, then $\cl (\conv_{\TT_C} (A)) = \bconvures (A)$ for any set $A \subset \Ren$.
\end{rem}
We expect a positive answer to the following question:
\begin{prob}\label{prob:convstruct}
Can we drop the condition on the dimension of $\bconv A$ in Theorem~\ref{thm:convstruct}?
\end{prob}

\section{Relationship between spindle and ball convexity, and separation by translates of a convex body}\label{sec:separation}

Clearly, for any convex body $C$, a $C$-ball convex set is closed and $C$-spindle convex.
Thus, for any $X$ and $C$ the $C$-ball convex hull of $X$ contains its $C$-spindle convex hull.
In \cite{BLNP07}, it is shown that if $C$ is the Euclidean unit ball, then for closed sets
the notions of spindle and ball convexity coincide.
Now we show that it is not so for every convex body $C$.

\begin{ex}\label{ex:convexhull}
We describe a $3$-dimensional convex body $C$ and a set $H\subseteq \Re^3$ for which 
$H$ is $C$-spindle convex but it is not $C$-ball convex.
Let $T \subset \Re^3$ be a regular triangle in the $x_3=0$ plane, with the origin as its centroid
(cf. Figure~\ref{fig:convexhull}).
Let $C = \conv \big((T+e_3) \cup (-T-e_3)\big)$.
Let $H$ be the intersection of $C$ with the plane with the equation $x_3=0$.
Note that $H$ is a regular hexagon: $H=(T-T)/2$.

\begin{figure}[ht]
\begin{center}
\includegraphics[width=0.45\textwidth]{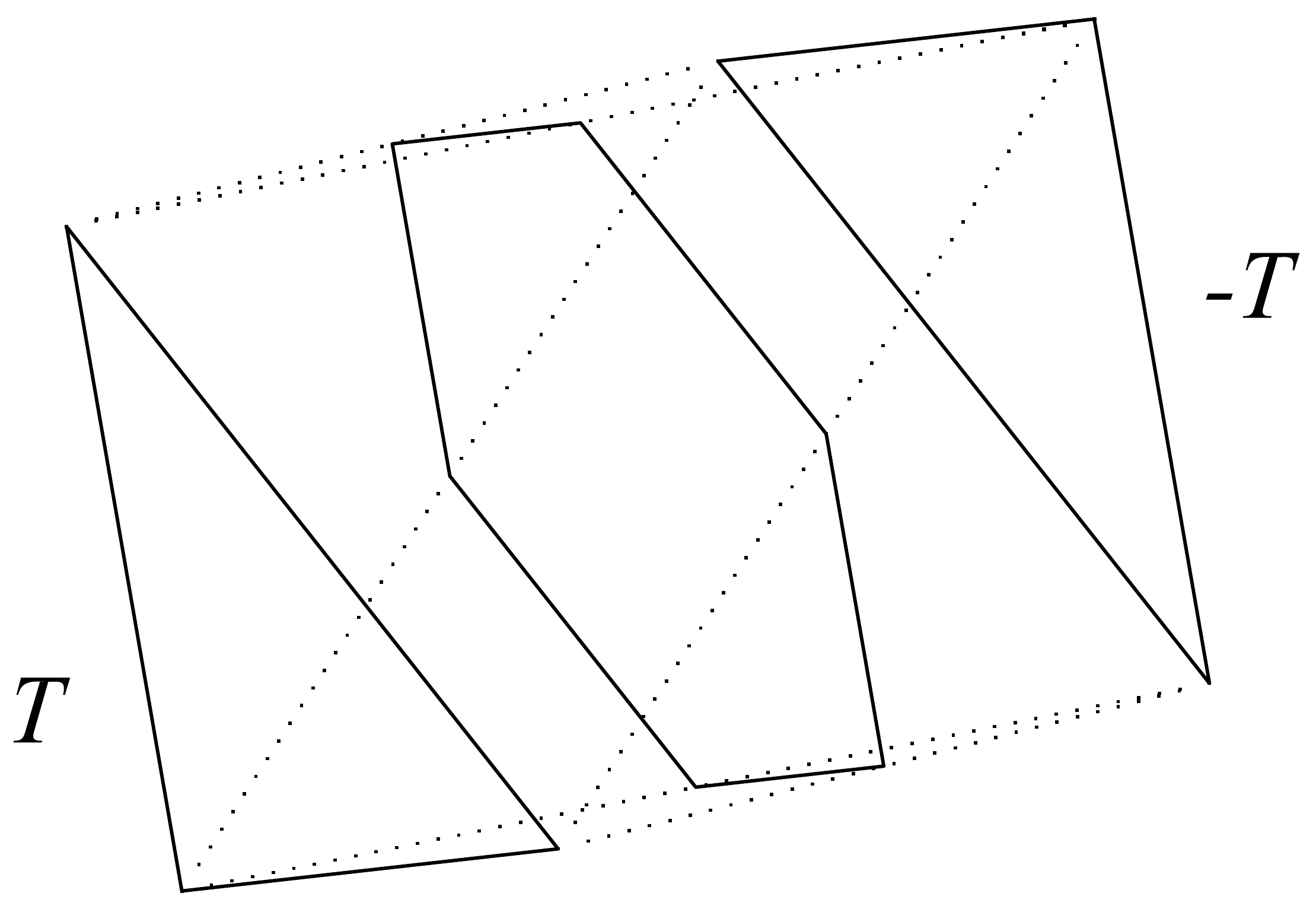}
\caption[]{Example~\ref{ex:convexhull}}
\label{fig:convexhull}
\end{center}
\end{figure}

We show that $H$ is $C$-spindle convex.
Note that $H$ and $T$ are of constant width two in the two-dimensional norm defined by $H$.
Thus, for any $p, q \in H$, there is a chord of $T$, parallel to $[p,q]$, that is not shorter than $[p,q]$.
From this, it follows that there is a translate $T+z_1$ of $T$ that contains $p, q$. Similarly, there is a set
$-T+z_2$ containing $p$ and $q$. Denote by $\mathcal K^n$ the collection of all $n$-dimensional convex bodies.
We observe that $[p,q]_C \subseteq C \cap (C+z_1-e_3) \cap (C+z_2+e_3)\subseteq H$, which yields the desired statement.

We have that $\sconv{H} = H$. However, clearly, there is only one translate of $C$ containing $H$, namely $C$.
Thus, $H=\sconv{H} \neq \bconv{H}=C$.
\end{ex}

We introduce the following notions. Note the order of $X$ and $Y$ in Definition~\ref{defn:separation}.

\begin{defn}\label{defn:separation}
Let $C \subset \Re^n$ be a convex body, and let $X, Y \subset \Re^n$.
We say that a translate $C+x$ of $C$ \emph{separates $X$ from $Y$}
if $X \subset C+x$, and $\inter (C+x) \cap Y = \emptyset$. If $X \subset \inter(C+x)$ and $(C+x) \cap Y = \emptyset$, then we say that $C+x$ \emph{strictly separates $X$ from $Y$}.
\end{defn}

In \cite{BLNP07}, it is proved that if $C=B^n$ is the Euclidean ball, then any $C$-spindle convex set
is separated from any non-overlapping $C$-spindle convex set by a translate of $C$.
By Example~\ref{ex:convexhull}, not all convex bodies have this property (there, $H$ is not separated from any singleton subset of $C$).
Thus, we introduce the following notions.

\begin{defn}\label{defn:separability}
Let $C \subset \Re^n$ be a convex body, and let $K \subset \Re^n$ be a $C$-spindle convex set.
We say that \emph{$K$ satisfies Property (p), (s) or (h) with respect to $C$}, if
\begin{itemize}
\item for every point $p \notin K$, there is a translate of $C$ that separates
$K$ from $p$ (Property (p)),
\item for every $C$-spindle convex set $K'$ that does not overlap with $K$ there is a translate of $C$ that separates
$K$ from $K'$ (Property (s)),
\item for every hyperplane $H$ with $H \cap \inter K = \emptyset$, there is a translate of $C$ that separates $K$ from $H$ (Property (h)).
\end{itemize}
\end{defn}

It is not difficult to show that Property (h) yields (s), which in turn yields (p).

\begin{rem}\label{rem:pointseparability}
A closed set $K$ satisfies (p) if, and only if, $K$ is $C$-ball convex.
In particular, for closed sets the notions of spindle convexity and ball convexity with respect to a convex body $C$ coincide if, and only if, every closed $C$-spindle convex set satisfies (p).
\end{rem}

\begin{rem}\label{rem:smoothsh}
For a smooth convex body $C$, (s) and (h) are equivalent.
\end{rem}

Recall that in Example~\ref{ex:convexhull}, $H$ is $C$-spindle convex but not $C$-ball convex.
We note that $C$ may be replaced by a smooth $C'$ such that $H$
is $C'$-spindle convex but not $C'$-ball convex. 
Simply, apply the following theorem for the convex body $C$ of Example~\ref{ex:convexhull} and any smooth and strictly convex body $D$.
Then it follows that $H$ of Example~\ref{ex:convexhull} is $C'$-spindle convex for $C'=C+D$, 
but it is easy to see that $H$ is not $C'$-ball convex.

\begin{thm}\label{thm:addition}
Let $C,D$ be convex bodies in $\Re^n$ and let $S\subseteq \Re^n$. If $S$ is $C$-ball convex, then $S$ is $(C+D)$-ball convex. Similarly, if $S$ is $C$-spindle convex, then $S$ is $(C+D)$-spindle convex.
\end{thm}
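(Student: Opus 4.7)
The plan is to reduce both assertions to a single Minkowski-difference identity: for every $v \in \Re^n$,
\begin{equation*}
C + v = \bigcap_{d \in D} \bigl( (C+D) + v - d \bigr).
\end{equation*}
The inclusion $\subseteq$ is immediate, since for $c \in C$ and $d \in D$ one has $c + d \in C + D$. For the reverse inclusion I pass to support functions: if $x$ lies in the right-hand side, then $(x - v) + D \subseteq C + D$, so the support functions satisfy $\langle x - v, u\rangle + h_D(u) \le h_C(u) + h_D(u)$ for every unit vector $u$. Since $h_D$ is finite, subtracting it from both sides yields $\langle x - v, u\rangle \le h_C(u)$ for all $u$, and hence $x - v \in C$.

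For the ball-convex claim, the cases $S = \emptyset$ and $S = \Re^n$ are immediate. Otherwise, write $S = \bigcap_{v \in V}(C + v)$ for some $V \subseteq \Re^n$. Substituting the identity above into each factor yields
\begin{equation*}
S = \bigcap_{v \in V}\bigcap_{d \in D} \bigl( (C+D) + v - d \bigr),
\end{equation*}
which exhibits $S$ as an intersection of translates of $C + D$, so $S$ is $(C+D)$-ball convex.

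For the spindle-convex claim, let $p, q \in S$. Suppose first that no translate of $C$ contains $\{p, q\}$; then $[p, q]_C = \Re^n$, and $C$-spindle convexity of $S$ forces $S = \Re^n$, which trivially contains $[p,q]_{C+D}$. Otherwise, for every translate $C + v$ with $\{p, q\} \subseteq C + v$ and every $d \in D$, the translate $(C+D) + v - d$ also contains $\{p, q\}$, so by the Minkowski identity
\begin{equation*}
[p, q]_{C+D} \subseteq \bigcap_{v : \{p,q\} \subseteq C + v}\bigcap_{d \in D} \bigl( (C+D) + v - d \bigr) = \bigcap_{v : \{p,q\} \subseteq C + v}(C + v) = [p, q]_C \subseteq S.
\end{equation*}

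The main technical point is the Minkowski-difference identity, whose nontrivial direction uses the cancellation of $h_D$ in the support-function inequality; the rest is bookkeeping among translates together with the observation that if no translate of $C + D$ contains $\{p,q\}$ then neither does any translate of $C$, so that the degenerate spindle case is handled automatically.
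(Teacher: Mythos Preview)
Your proof is correct and follows essentially the same route as the paper: both rest on the identity $C+v=\bigcap_{d\in D}\bigl((C+D)+v-d\bigr)$, from which the inclusions $\bconv[C+D]{S}\subseteq\bconv{S}$ and $[p,q]_{C+D}\subseteq[p,q]_C$ are read off. The only difference is that the paper quotes this identity as a standard lemma (Lemma~3.1.8 in Schneider's book), whereas you supply a short proof via support functions; your treatment of the degenerate spindle cases is also more explicit than the paper's.
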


We need the following standard lemma, for a proof see Lemma~3.1.8. in \cite{Sch93}.

\begin{lem}\label{lem:addition}
If $C,D$ are convex bodies in $\Re^n$, then $C=\bigcap_{x\in D}(C+D-x)$.
\end{lem}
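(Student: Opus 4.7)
The plan is to deduce both implications directly from Lemma~\ref{lem:addition}, which lets us rewrite every translate of $C$ as an intersection of translates of $C+D$. For the ball convex claim, suppose $S$ is $C$-ball convex. The cases $S=\emptyset$ and $S=\Re^n$ are trivial, so assume $S=\bigcap_{v\in V}(C+v)$ for some nonempty $V\subseteq\Re^n$. By Lemma~\ref{lem:addition},
\[
C+v \;=\; \bigcap_{x\in D}\bigl((C+D)+(v-x)\bigr)
\]
for every $v\in V$. Substituting this expression into $S=\bigcap_{v\in V}(C+v)$ exhibits $S$ as an intersection of translates of $C+D$, so $S$ is $(C+D)$-ball convex.

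For the spindle convex claim, the key step will be to show that for any two points $p,q\in\Re^n$,
\[
[p,q]_{C+D} \;\subseteq\; [p,q]_C.
\]
Once this is established, if $S$ is $C$-spindle convex and $p,q\in S$, then either $[p,q]_C=\Re^n$ (in which case $S=\Re^n$, so $(C+D)$-spindle convexity is immediate), or some translate of $C$ contains both $p$ and $q$, and then $[p,q]_{C+D}\subseteq[p,q]_C\subseteq S$, giving $(C+D)$-spindle convexity of $S$.

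To prove the containment $[p,q]_{C+D}\subseteq[p,q]_C$, fix any translate $C+v$ with $\{p,q\}\subseteq C+v$. By Lemma~\ref{lem:addition}, $C+v=\bigcap_{x\in D}\bigl((C+D)+(v-x)\bigr)$; in particular, for every $x\in D$ the translate $(C+D)+(v-x)$ contains $\{p,q\}$, and hence also contains $[p,q]_{C+D}$ by definition of the $(C+D)$-spindle. Intersecting over $x\in D$ and applying the lemma again yields $[p,q]_{C+D}\subseteq C+v$. Since $C+v$ was an arbitrary translate of $C$ containing $\{p,q\}$, intersecting over all such translates gives $[p,q]_{C+D}\subseteq[p,q]_C$, as required. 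I do not expect a serious obstacle here; the only subtlety is handling the degenerate cases in the spindle definition and observing that translates of $C+D$ containing $\{p,q\}$ do exist whenever translates of $C$ do (visible from the inclusion $C+v\subseteq(C+D)+(v-x)$ for any $x\in D$).
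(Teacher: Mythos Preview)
Your proposal does not prove the stated lemma at all: Lemma~\ref{lem:addition} is the set-theoretic identity $C=\bigcap_{x\in D}(C+D-x)$, whereas what you have written is a proof of Theorem~\ref{thm:addition} (preservation of $C$-ball and $C$-spindle convexity under passing from $C$ to $C+D$), \emph{assuming} Lemma~\ref{lem:addition} as input. With respect to the lemma itself this is circular --- you invoke the very statement you are supposed to establish.

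For the record, the paper does not supply its own proof of Lemma~\ref{lem:addition} either; it simply cites Lemma~3.1.8 of Schneider. A direct argument is short: for any $x\in D$ one has $C\subseteq C+D-x$ (take $d=x$), giving one inclusion. Conversely, if $y\in\bigcap_{x\in D}(C+D-x)$ then $y+D\subseteq C+D$, and the cancellation law for Minkowski sums of convex bodies (or a one-line support-function computation) forces $y\in C$.

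If your intended target was actually Theorem~\ref{thm:addition}, then your argument is correct and is essentially the paper's proof spelled out in more detail: both deduce $\bconv[C+D]{S}\subseteq\bconv{S}$ and $[p,q]_{C+D}\subseteq[p,q]_C$ from Lemma~\ref{lem:addition} and conclude immediately.
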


\begin{proof}[Proof of Theorem~\ref{thm:addition}]
Observe that by Lemma~\ref{lem:addition}
we have that $\conv^b_{C+D}(S)\subseteq \conv^b_C(S)$, and $\conv^s_{C+D}(S)\subseteq \conv^s_C(S)$, for any $S\subseteq \Re^n$.
These readily imply the statement of the theorem concerning ball convexity.
The statement concerning spindle convexity follows from the fact that for any two points $p,q\in S$ we have
$[p,q]_{C+D}=\bconv[C+D]{\{p,q\}}  \subseteq \bconv[C]{\{p,q\}}=[p,q]_{C}$.
\end{proof}

A frequently used special case of Theorem~\ref{thm:addition} is the following.

\begin{cor}\label{cor:addition}
Let $C$ be a convex body in $\Re^n$, let $S\subseteq \Re^n$, and let $0<r<1$ be arbitrary. If $S$ is $C$-ball convex, then $rS$ is $C$-ball convex. 
In particular, $rC$ is a $C$-ball convex set. Similarly, if $S$ is $C$-spindle convex, then $rS$ is $C$-spindle convex. In particular, $rC$ is $C$-spindle convex.
\end{cor}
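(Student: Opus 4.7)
The plan is to reduce Corollary~\ref{cor:addition} to Theorem~\ref{thm:addition} by combining two ingredients: homothetic rescaling of $C$-ball convex (resp. $C$-spindle convex) sets, and the fact that a convex body $C$ can be written as a Minkowski sum $rC + (1-r)C$. Neither step should be technically hard; the point of the proposal is simply to identify the right decomposition.

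First I would verify the homothety step: if $S = \Bp(\Bm(S)) = \bigcap_{v \in \Bm(S)}(C + v)$ is $C$-ball convex, then $rS = \bigcap_{v \in \Bm(S)}(rC + rv)$, so $rS$ is expressible as an intersection of translates of $rC$, hence $rC$-ball convex. The analogous statement for spindle convexity follows from the fact that $[rp, rq]_{rC} = r[p,q]_C$, so the defining property of spindle convexity scales.

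Next I would apply Theorem~\ref{thm:addition} with base body $rC$ and summand $(1-r)C$. Since $C$ is convex and $0 < r < 1$, one has
\[
rC + (1-r)C = C,
\]
so Theorem~\ref{thm:addition} (in either its ball- or spindle-convex form) upgrades $rC$-ball convexity of $rS$ to $C$-ball convexity, and likewise for spindle convexity. Chaining the two steps gives both main assertions.

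Finally, for the \textbf{in particular} clauses, I would note that $C$ itself is trivially $C$-ball convex (it is the intersection of the single translate $C + 0$) and $C$-spindle convex, and then take $S = C$ in what has already been proved to conclude that $rC$ is both $C$-ball convex and $C$-spindle convex. The only real ``obstacle'' is a notational one, namely recording correctly that rescaling a $C$-ball convex set by $r$ produces an $rC$-ball convex set; once this is set down, the identity $C = rC + (1-r)C$ and Theorem~\ref{thm:addition} do the rest.
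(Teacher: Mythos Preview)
Your proposal is correct and follows essentially the same approach as the paper: the paper's one-line proof ``We apply Theorem~\ref{thm:addition} to $rC$ and $(1-r)C$'' presupposes exactly the scaling step you spell out (that $rS$ is $rC$-ball convex, resp.\ $rC$-spindle convex, whenever $S$ is $C$-ball convex, resp.\ $C$-spindle convex), and then uses $rC+(1-r)C=C$ just as you do. You have simply made explicit the details the paper leaves to the reader.
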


\begin{proof}
We apply Theorem~\ref{thm:addition} to $rC$ and $(1-r)C$.
\end{proof}

For $n\geq 3$, the analogous implication of Theorem~\ref{thm:addition} is not true for $S+D$ in general, 
so ball convexity is not preserved in general by adding a convex body $D$ to both a $C$-ball convex set $S$ and to $C$.
The same holds for spindle convexity. We show both in the following example.

\begin{ex}\label{ex:addition}
We describe convex bodies $C, D \subset \Ren$ and a set $S\subset \Ren$, for any $n\geq 3$, such that $S$ is $C$-ball convex 
(and thus $S$ is $C$-spindle convex), and $S+D$ is not $(C+D)$-spindle convex. We note that all sets $C, D, S \subseteq \Ren$ will be centrally symmetric.

Let $C=\conv([-1,1]^n\cup\{\pm (1+\epsilon) e_i\}_{i=1}^{n-1})$, where $0<\epsilon<1$. Let $D=rB^n$, that is, 
$D$ is a Euclidean ball of radius $r$ for some $r>0$, centered at the origin. Choose $S=[e_n,-e_n]$. Then $S=\bconv{S}=\sconv{S}$ since $S=\left(C+\sum_{i=1}^{n-1}e_i\right)\cap \left(C-\sum_{i=1}^{n-1}e_i\right)$.

On the other hand, $(S+D)\cap \{x_n=0\}=rB^{n-1}\times \{0\}$, and thus $\pm re_i\in \bd(S+D)$ for any $1\leq i\leq n-1$, while we have
$$
\pm re_i\in \inter\bigg(\conv^s_{C+D}\Big((1+r)e_n\cup\big(-(1+r)e_n\big)\Big)\bigg)\subseteq \inter\big(\conv^s_{C+D}(S+D)\big),
$$
for any $1\leq i\leq n-1$, so $(S+D)\subsetneq \conv^s_{C+D}(S+D)$.
\end{ex}

In the following, we show that Example~\ref{ex:convexhull} is not a `rare phenomenon'.

\begin{thm}
Let $n \geq 3$. The family of those smooth $n$-dimensional convex bodies for which the associated ball and spindle convexity do not coincide, forms an everywhere dense set in the metric space of the $n$-dimensional convex bodies, equipped with the Hausdorff metric.
\end{thm}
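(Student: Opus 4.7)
The plan is to exploit the idea implicit in the remark after Example~\ref{ex:convexhull}: if a convex body $\widetilde C$ admits a set $\widetilde H$ that is $\widetilde C$-spindle but not $\widetilde C$-ball convex, and $D$ is any smooth strictly convex body, then $\widetilde C + D$ is smooth and still admits $\widetilde H$ as a spindle- but not ball-convex set. To achieve density around a given $K\in\KK^n$, I will take $D$ to be a smooth strictly convex approximation $K_1$ of $K$, and $\widetilde C$ to be a small copy of an $n$-dimensional antiprism-like body.

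For the construction, given $\varepsilon>0$, first approximate $K$ in the Hausdorff metric by a smooth strictly convex body $K_1$ with $\origo\in\inter K_1$ and $\dha(K,K_1)<\varepsilon/2$ (standard). Next, let $T$ be a non-centrally-symmetric convex body in $\{x_n=0\}\subset\Ren$ with centroid at $\origo$ (for concreteness, a regular $(n-1)$-simplex), put $H:=(T-T)/2\subset\{x_n=0\}$, and set $C_0:=\conv\bigl((T+e_n)\cup(-T-e_n)\bigr)$. The identity $T-T=2H$ is precisely the ``$T$ has constant width $2$ in the $H$-norm'' condition used in Example~\ref{ex:convexhull}, so the same antiprism argument yields that $H$ is $C_0$-spindle convex; moreover, a support-function computation shows that the only translate of $C_0$ containing $H$ is $C_0$ itself, whence $\bconv[C_0]{H}=C_0\supsetneq H$. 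Finally, choose $\lambda>0$ with $\lambda\cdot\diam(C_0)<\varepsilon/2$ and set $C:=K_1+\lambda C_0$. Then $\dha(K,C)<\varepsilon$, and $C$ is smooth, since the Minkowski sum of the smooth body $K_1$ with any convex body is smooth.

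That $\lambda H$ is $C$-spindle convex is immediate: by homothety, $\lambda H$ is $\lambda C_0$-spindle convex, and Theorem~\ref{thm:addition} applied to $\lambda C_0$ and $K_1$ upgrades this to $C$-spindle convexity.

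The main obstacle is to show that $\lambda H$ is \emph{not} $C$-ball convex. Let $V:=\{v\in\Ren:\lambda H\subseteq C+v\}$; this is a compact convex set containing $\origo$ in its interior (since $\lambda H\subseteq\lambda C_0\subseteq\inter C$). I will show that $\origo\in\inter(C+v)$ for every $v\in V$; then the continuous positive function $v\mapsto\dist(\origo,\bd(C+v))$ attains a positive minimum $\eta>0$ on the compact set $V$, so that a Euclidean ball around $\origo$ of radius $\eta$ lies in $\bigcap_{v\in V}(C+v)=\bconv[C]{\lambda H}$. This makes $\bconv[C]{\lambda H}$ $n$-dimensional, whereas $\lambda H$ is only $(n-1)$-dimensional, so $\lambda H\subsetneq\bconv[C]{\lambda H}$. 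To prove the sub-claim, suppose for contradiction that $\origo\in\bd(C+v)$ for some $v\in V$. Smoothness of $C$ gives a unique supporting hyperplane $\Pi$ to $C+v$ at $\origo$; since $\lambda H\subseteq C+v$ with $\origo\in\relint(\lambda H)$, $\Pi$ must contain $\aff(\lambda H)=\{x_n=0\}$, forcing $\Pi=\{x_n=0\}$ with outer normal $\pm e_n$. Strict convexity of $K_1$ makes the supporting face of $C$ with outer normal $\pm e_n$ the Minkowski sum of a single point of $K_1$ and $\pm\lambda(T+e_n)$, i.e., a translate of $\pm\lambda T$. So $\lambda H$ would have to fit in a translate of $\pm\lambda T$, equivalently $H$ in a translate of $T$ (using $-H=H$); but $(T-T)/2$ does not fit in any translate of $T$ when $T$ is non-centrally-symmetric, yielding the desired contradiction.
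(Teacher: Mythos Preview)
Your argument is correct. The core ingredients coincide with the paper's---the antiprism $C_0=\conv\bigl((T+e_n)\cup(-T-e_n)\bigr)$, the section $H=(T-T)/2$, and Theorem~\ref{thm:addition} to pass to a Minkowski sum---but you assemble them differently. The paper first locates two parallel exposed points of the given body $C$, glues small simplices $T+x$ and $-T+y$ there to obtain $C'=\conv\bigl(C\cup(T+x)\cup(-T+y)\bigr)$, observes (as in Example~\ref{ex:convexhull}) that $H$ is $C'$-spindle convex with $n$-dimensional $C'$-ball hull, and only then smoothens via $\bar C=C'+\rho B^n$. You instead first pass to a smooth \emph{and strictly convex} approximant $K_1$ of $K$, and then Minkowski-add a small copy of the whole antiprism, $C=K_1+\lambda C_0$.

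What your route buys is a clean, self-contained verification that $\lambda H$ is not $C$-ball convex: strict convexity of $K_1$ forces the face of $C$ with outer normal $\pm e_n$ to be a translate of $\pm\lambda T$, and the Brunn--Minkowski inequality (strict for non-symmetric $T$) then rules out $H\subseteq T+w$. The paper, by contrast, leaves the corresponding step (``it is easy to see that $H$ is not $C'$-ball convex'' after adding a smooth strictly convex body, stated just before Theorem~\ref{thm:addition}) to the reader; your argument is precisely a way to fill that in. Conversely, the paper's construction is slightly more economical in that it does not require approximating $K$ by a strictly convex body first.

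One minor remark: your aside that ``the only translate of $C_0$ containing $H$ is $C_0$ itself'' is stronger than what you actually use later and would need a separate justification (one must also exclude translates with $v_n\neq 0$, i.e.\ show $H$ does not fit into any off-center cross-section $\frac{1+t}{2}T-\frac{1-t}{2}T$). Since your proof for $C$ does not rely on this, it causes no harm.
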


\begin{proof}
Let $C$ be any convex body. Note that there are two distinct points $x,y$ of $C$ with a hyperplane $F$ through the origin such that the parallel hyperplanes $F+x$ and $F+y$ support $C$ and their intersection with $C$ is $\{ x \}$ and $\{ y \}$, respectively.

Let $T$ be a small $(n-1)$-dimensional regular simplex in $F$ with the origin as its centroid.
Let $C' = \conv (C \cup (T+x) \cup (-T+y))$ and let $H=(T-T)/2$. Similarly to Example~\ref{ex:convexhull},
we observe that $H$ is an $(n-1)$-dimensional $C'$-spindle convex set, whereas $\bconv[C']{H}$ is $n$-dimensional.

To construct a smooth body with respect to which ball and spindle convexity do not coincide, we take $H$ and $\bar C=C'+\rho B^n$. If $\rho>0$ is sufficiently small, then $\bar C$ is close to $C$.
\end{proof}

Now we present an example of $C$ and a closed convex set $H$ where $H$ satisfies Property (p), i.e., it is $C$-ball convex, but does not satisfy (h).

\begin{figure}[ht]
\begin{center}
\includegraphics[width=0.47\textwidth]{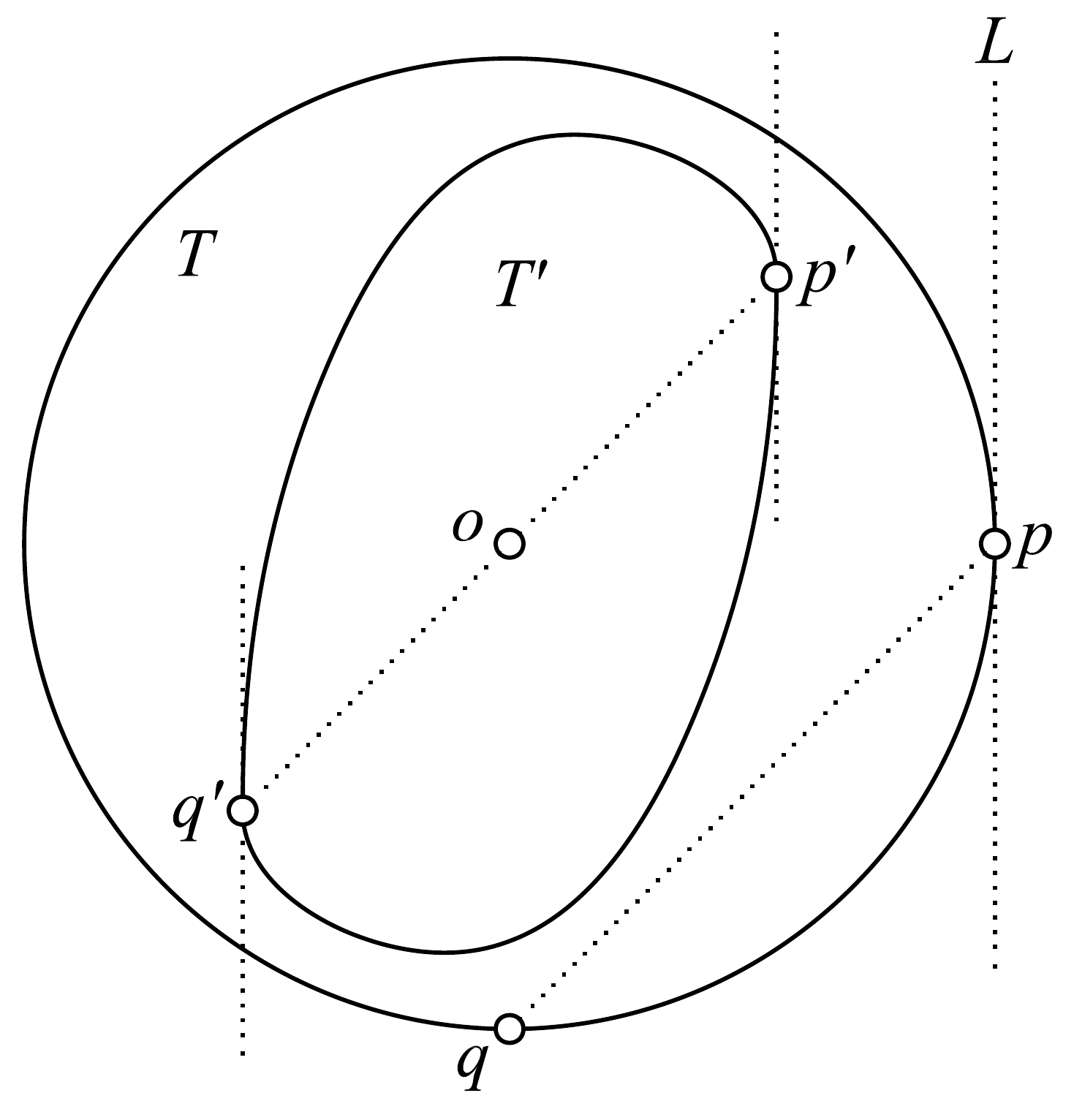}
\caption[]{Example~\ref{ex:convexseparation}}
\label{fig:sepdifferent}
\end{center}
\end{figure}

\begin{ex}\label{ex:convexseparation}
Let $T$ be the Euclidean unit disk centered at the origin in the plane $U=\{x_3=0 \}$ in $\Re^3$, and consider the points $p=e_1$ and $q=-e_2$.
Let $T' \subset T+e_3$ be a smooth plane convex body in the $U+e_3$ plane with the following properties: 
$T'$ is symmetric about $e_3$, and the chord $[p',q']$ of $T'$ connecting its supporting lines
parallel to the $x_2$-axis is parallel to and slightly longer than $[p,q]$.
We choose our notation in a way that $T'+p-p'$ and $T$ are on the same side of the line $L$ defined by the equations $x_1=1, x_3=0$.
Let $p^*$ be the reflection of $p'$ in the plane $U$,
and set $C=\conv (T \cup T' \cup (-T'))$.
Figure~\ref{fig:sepdifferent} shows the $x_3=0$ and $x_3=1$ sections of $C$, viewing it from the top.
Let $H=(C+p-p') \cap (C+p-p^*)$  and observe that $H= T'+p-p'$ is a $C$-ball convex set contained in $U$.
Consider the plane with the equation $x_1=1$, and note that it supports $H$ at $p$.

If a translate $C+v$ of $C$ separates $H$ from the plane $x_1=1$ then $p\in C+v$, and $x_1=1$ is a support plane of $C+v$. It follows that $v=0$.
However, $C \nsupseteq H$ because $|p^\prime-q^\prime|>|p-q|$.
\end{ex}

A suitable modification of this example yields a smooth convex body $C$ arbitrarily close
to the Euclidean unit ball such that some $C$-ball convex sets (which thus satisfy (p)) do not satisfy (h).
Hence, by Remark~\ref{rem:smoothsh}, there are spindle convex sets that satisfy (p) and do not satisfy (s) for some
convex body $C$.

We propose the following problems.

\begin{prob}
Is there a spindle convex set $K$ that satisfies (h) and does not satisfy (s) with
respect to some convex body $C$?
\end{prob}

\begin{prob}
Is there a convex body $C$ such that every $C$-spindle convex set $K$ satisfies
(p) (respectively (s)), but at least one of them does not satisfy (s) (respectively (h))?
\end{prob}

We conclude this section by finding some special classes of convex bodies
such that any spindle convex set with respect to any of  them satisfies Property (h).
Thus, if a set is closed and  spindle convex with respect to any of them, then it is ball convex with respect to the same convex body.
First, we recall a standard definition.

\begin{defn}\label{defn:Cdistance}
The central symmetral $\frac{1}{2}(C-C)$ of a convex body $C \subset \Re^n$  defines
a norm on $\Re^n$, called the \emph{relative norm of $C$} (cf. \cite{E65}).
We recall that in this norm, $C$ is a convex body of constant width two.
For points $p, q \in \Re^n$, we call the distance between $p$ and $q$, measured in the norm relative to $C$,
the \emph{$C$-distance of $p$ and $q$} (cf. Lassak \cite{L91}).
For a set $X$, we denote the diameter of $X$ measured in $C$-distance by $\diam_C X$.
If $C$ is the Euclidean ball, we may write simply $\diam X$.
\end{defn}

\begin{thm}\label{thm:separation2d}
Let $C \subset \Re^2$ be a plane convex body. Then any $K \subset \Re^n$ $C$-spindle convex set satisfies (h).
\end{thm}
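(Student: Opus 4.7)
My plan is to reduce the statement to a canonical position, settle the case of a smooth strictly convex $C$ by exhibiting a single supporting translate, and then recover the general case by approximation via Theorem~\ref{thm:addition}. If $K = \Re^2$ there is nothing to prove; otherwise, spindle convexity combined with $K \neq \Re^2$ forces the width of $K$ in each direction to be at most that of $C$, so $K$ is bounded, and a short continuity argument shows the closure $\overline{K}$ is again $C$-spindle convex (note also that spindle convex sets are linearly convex, so $\inter K = \inter \overline{K}$). I may therefore assume $K$ is closed; the cases $\dim K \leq 1$ are immediate by elementary geometry, so I focus on $\dim K = 2$. After an affine change of coordinates I take $H = \{y_2 = 0\}$, $u = e_2$, and $K \subset H^- := \{y_2 \leq 0\}$; if $H \cap K = \emptyset$, replacing $H$ with a supporting line of $K$ between $H$ and $K$ reduces the problem to the case where $H$ supports $K$ at some $p \in K \cap H$.

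For smooth strictly convex $C$, let $t_C \in \bd C$ be the unique point with outer normal $e_2$, and consider the distinguished translate $C_0 := C + (p - t_C)$, which lies in $H^-$ with $H$ tangent at $p$. The heart of the argument is the claim $K \subset C_0$, which I prove by contradiction. Take $q \in K$; spindle convexity gives $[p,q]_C \subset K \subset H^-$, and if $q \notin C_0$ then $x_0 := p - t_C$ is missing from $V_{p,q} := (p - C) \cap (q - C)$. Since $[p,q]_C = \bigcap_{x \in V_{p,q}}(C + x)$ and $C$ is smooth and strictly convex, the normal cone of $[p,q]_C$ at $p$ is the closed conic hull of the outer normals $\nu_C(p - x)$ as $x$ ranges over the active constraints $V_{p,q} \cap \bd(p - C)$. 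Via $x \mapsto p - x$ this active set is identified with the arc $\bd C \cap (C + p - q)$ of $\bd C$, which is connected because $C$ and its translate $C + (p - q)$ are two overlapping smooth strictly convex plane bodies; hence its image under $\nu_C$ is a connected arc on the unit circle. Now $q \notin C_0$ is equivalent to $t_C \notin C + (p - q)$, so the unique preimage of $e_2$ under $\nu_C$ is absent from the active arc, and the normal arc avoids $e_2$. Therefore $e_2 \notin N_{[p,q]_C}(p)$, meaning $H$ does not support $[p,q]_C$ at $p$, which forces $[p,q]_C$ to reach strictly above $H$ near $p$, contradicting $[p,q]_C \subset H^-$.

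For a general convex body $C$, fix any smooth strictly convex body $D$ and set $C_\delta := C + \delta D$; this is smooth and strictly convex for every $\delta > 0$. By Theorem~\ref{thm:addition}, $K$ remains $C_\delta$-spindle convex, so the smooth case yields translates $C_\delta + x_\delta$ with $K \subset C_\delta + x_\delta \subset H^-$. Fixing any $k_0 \in K$, one has $x_\delta \in k_0 - C_\delta$, a set of bounded diameter as $\delta \to 0$, so $\{x_\delta\}$ is bounded and admits a convergent subsequence $x_{\delta_n} \to x^*$; passing to the Hausdorff limit produces a translate $C + x^*$ with $K \subset C + x^* \subset H^-$, which is the desired separator.

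The main technical obstacle is the normal-cone step in the smooth case: one must justify that $N_{[p,q]_C}(p)$ is exactly the closed conic hull of the active outer normals, and that the active arc on $\bd C$ is connected. Both facts depend decisively on the planar geometry of two overlapping smooth strictly convex bodies, which is precisely where the restriction to $\Re^2$ enters --- Example~\ref{ex:convexhull} already shows that the analogous higher-dimensional statement fails.
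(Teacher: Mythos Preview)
Your overall strategy differs from the paper's: you reduce to smooth strictly convex $C$ via Theorem~\ref{thm:addition} and a Hausdorff limit, whereas the paper argues directly for arbitrary $C$ by an elementary chord-length argument (splitting into the cases $C\cap L=\{x\}$ and $C\cap L=[p,q]$). Your approximation step is sound and is a nice structural simplification, and the limiting argument in the final paragraph is correct.

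The genuine gap is exactly where you flag it: the assertion that $N_{[p,q]_C}(p)$ equals the closed conic hull of the active outer normals. You invoke this as if it were a standard identity for intersections of smooth convex bodies, but for infinite intersections it is not automatic, and you provide no proof. Unwinding it, your implication ``$e_2$ lies in the active normal arc whenever $e_2\in N_{[p,q]_C}(p)$'' is \emph{equivalent} to the statement ``if $[p,q]_C\subset H^-$ and $p\in H$ then $q\in C_0$'', which is precisely the heart of what must be shown in the smooth case. So the normal-cone step is not a technical detail to be filled in later --- it is the whole content of the smooth case, and you have only restated it. (There is also a silent step: even granting the conic-hull identity, ``$e_2\notin\text{arc}\Rightarrow e_2\notin\text{cone}$'' needs the arc to span at most a half-circle; this is true, since every active normal $n$ satisfies $\langle p-q,n\rangle\ge 0$, but you do not mention it.)

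By contrast, the paper disposes of the smooth case with a one-paragraph geometric observation: the lengths of chords of $C$ parallel to $[p,q]$ vary unimodally, so if $q\notin C_0$ then every chord of $C_0$ at least as long as $[p,q]$ lies on the far side of the line through $p$ and $q$, forcing every translate of $C$ containing $p,q$ to protrude above $H$. That argument is elementary and complete, and would also plug directly into your framework for the smooth case.
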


\begin{proof}
Clearly, we may assume that $K$ is closed.
We show the assertion for the case that $K$ is a plane convex body.
We leave it to the reader to verify it in the cases that $K$ is not bounded or has an empty interior.

Let $K$ be a $C$-spindle convex body, and $L$ be any line supporting $K$.
Since translating $C$ does not change whether $K$ satisfies (h) or not, we may assume that $L$ supports also $C$.
Let $L'$ be the other supporting line of $C$ parallel to $L$.

First, consider the case that $C \cap L$ is a singleton $\{ x \}$. Note that as $K$ is $C$-spindle
convex, we have that $K \cap L$ is also a singleton, since otherwise the $C$-spindle of the endpoints of $K \cap L$
has a point on the side of $L$ not containing $K$.
Without loss of generality, we may assume that $K \cap L = \{ x \}$.

Assume that $K \not\subseteq C$, and consider a point $y \in K \setminus C$.
Observe that there is a translate of $C$ containing $x$ and $y$ if, and only if their $C$-distance is at most two.
Thus, $y$ is contained in the closed unbounded strip between $L$ and $L'$.
Note that if we sweep through $C$ by a family of parallel lines, the $C$-length of the intersecting segment strictly increases
while its length reaches two, then it may stay two for a while,
then it strictly decreases until it reaches the other supporting line of $C$.
Consider the chords of $C$ that are parallel to and not shorter than $[x,y]$, and observe that, since $y \notin C$, they all are
on the side of the line, passing through $x$ and $y$, that contains $L' \cap C$ (cf. Figure~\ref{fig:planeseparationsinglepoint}).
From this, it follows that the intersection of all the translates of $C$ containing $x$ and $y$
(or, in other words, the $C$-spindle of $x$ and $y$) has a point on the side of $L$ not containing $C$.
Since $K$ is $C$-spindle convex and $L$ supports $K$ at $x$, we arrived at a contradiction.

\begin{figure}[ht]
\begin{minipage}[b]{0.5\columnwidth}%
    \centering
    \includegraphics[width=0.9\textwidth]{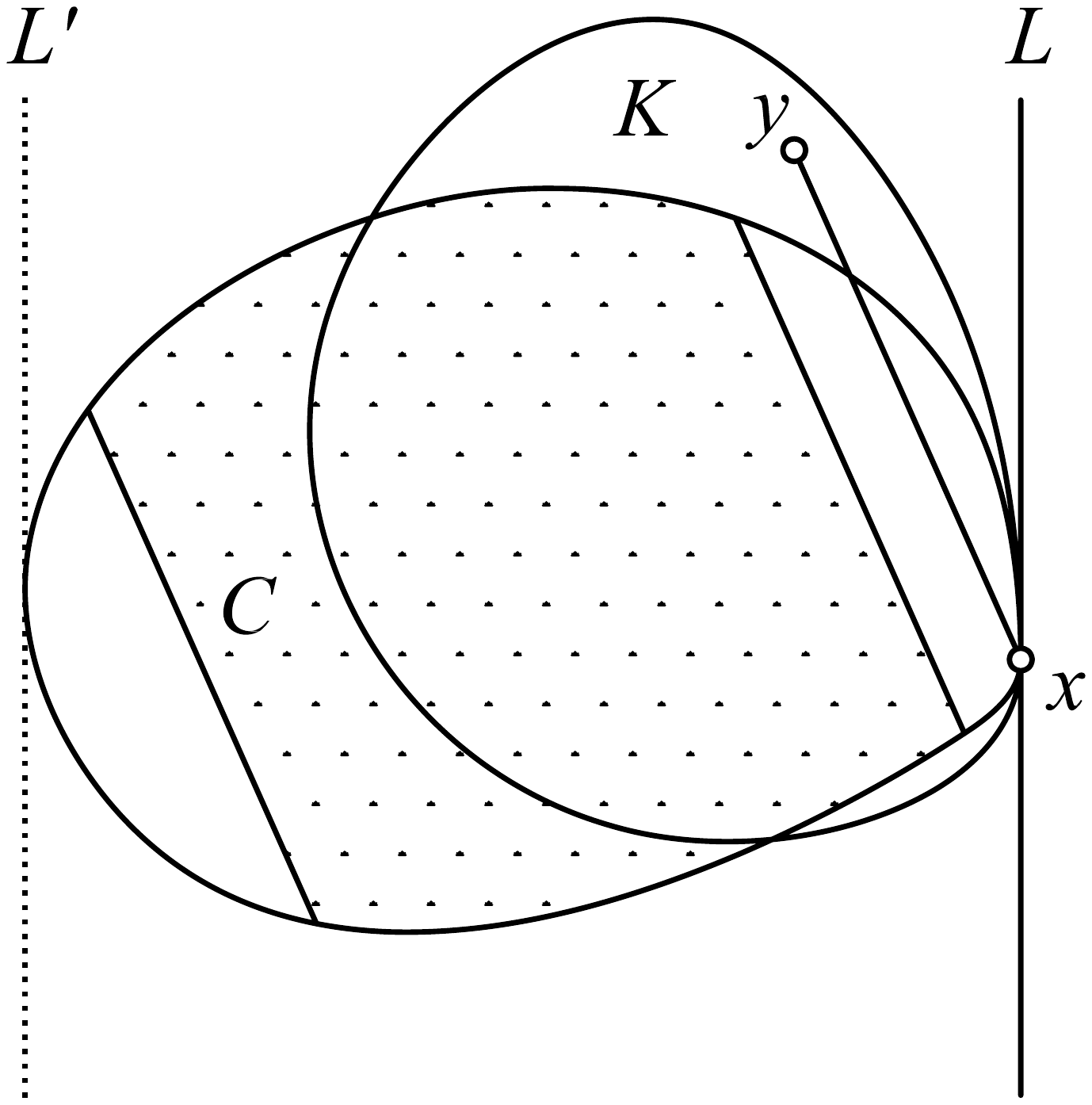}
    \caption[]{\\When $C\cap L=\{x\}$}
    \label{fig:planeseparationsinglepoint}
\end{minipage}%
\hfill%
\begin{minipage}[b]{0.5\columnwidth}%
\centering
    \includegraphics[width=0.9\textwidth]{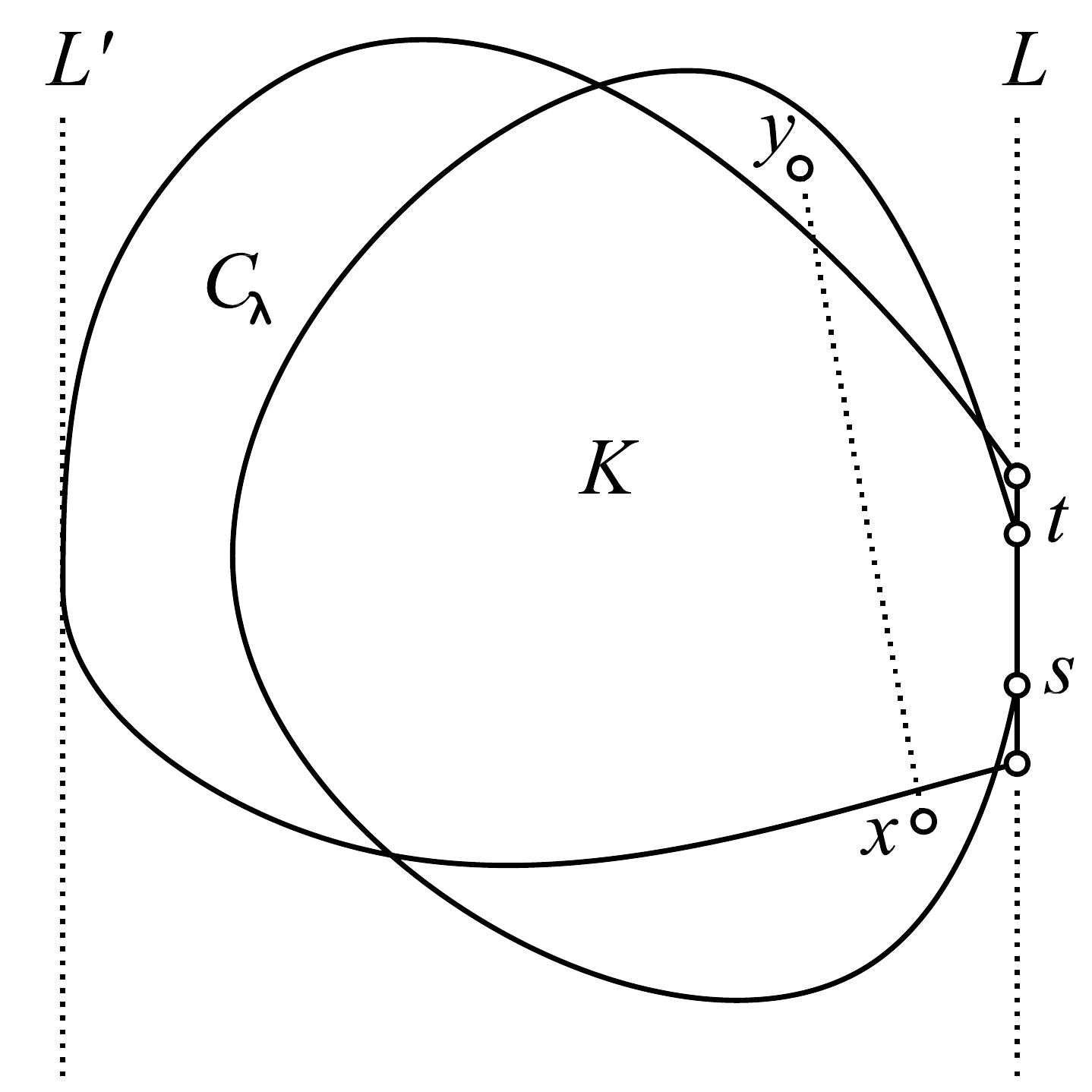}
    \caption[]{\\When $C\cap L=[p,q]$}
    \label{fig:planeseparation}
\end{minipage}
\end{figure}

Now consider the case that $C \cap L = [p,q]$ with $p \neq q$.
Let $K \cap L = [s,t]$.
Then $|s-t| \leq |p-q|$, and we may assume that $[s,t] \subset [p,q]$ and that $t \in [s,q]$.
If there is a point $v \in K$ not contained in the closed strip bounded by $L$ and $L'$, then the $C$-distance
of $v$ and $s$ is greater than two; a contradiction.
Set $C_0 = t-q+C$, $C_1 = s-p+C$ and $u=q-p-(s-t)$, which yields that $C_1 = u+C_0$.

Assume that there is no translate of $C$, with supporting line $L$, that contains $K$.
Then, $C_\lambda \setminus K \neq \emptyset$ for every $\lambda \in [0,1]$, where $C_\lambda = \lambda u +C_0$.
Let $H$ denote the closed strip bounded by $L$ and $L'$.
For simplicity, we regard the connected component of $H \setminus C_\lambda$, not containing $q$ as the one
\emph{below} $C_\lambda$, and the other one as the one \emph{above} $C_\lambda$.
It is easy to see that if there is no $\lambda$ with $K \subset C_\lambda$, then
there is a value of $\lambda$ such that both components of $H \setminus C_\lambda$ contain
a point of $K$.
Let $\lambda$ be such a value, and let $x,y \in K$ be points such that $x$ is in the component
below $C_\lambda$ and $y$ is in the component above $C_\lambda$ (cf. Figure~\ref{fig:planeseparation}).

Consider the segment $[x,y]$ and note that if their $C$-distance is greater than two,
then $[x,y]_C = \Re^2$.
Thus, all the chords of $C_\lambda$ that are parallel to and not shorter than $[x,y]$ are on the same side of the line containing $[x,y]$.
Similarly like in the previous case, it follows that $[x,y]_C \not\subset H$, which contradicts our assumption that $K \not\subseteq H$.
\end{proof}

\begin{cor}
For a plane convex body $C\subset\Re^2$, any closed $C$-spindle convex set is $C$-ball convex.
\end{cor}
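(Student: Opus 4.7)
The plan is to assemble the corollary directly from pieces already established in the section. Specifically, I would combine Theorem~\ref{thm:separation2d}, the implication chain (h) $\Rightarrow$ (s) $\Rightarrow$ (p) noted after Definition~\ref{defn:separability}, and Remark~\ref{rem:pointseparability}.

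First I would fix a closed $C$-spindle convex set $K\subseteq\Re^2$. By Theorem~\ref{thm:separation2d}, $K$ satisfies Property (h): for every hyperplane (i.e., line) $H$ in $\Re^2$ disjoint from $\inter K$, some translate of $C$ separates $K$ from $H$. Second, I would invoke the easy observation, stated in the paragraph following Definition~\ref{defn:separability}, that Property (h) implies Property (s), which in turn implies Property (p). In particular, $K$ satisfies Property (p): every point outside $K$ can be separated from $K$ by a translate of $C$.

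Finally, I would apply Remark~\ref{rem:pointseparability}, which asserts that a closed set satisfies (p) precisely when it is $C$-ball convex. Hence $K$ is $C$-ball convex, proving the corollary.

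There is no genuine obstacle here; the only care needed is to confirm that Theorem~\ref{thm:separation2d} indeed applies to all closed $C$-spindle convex subsets of $\Re^2$ (the theorem's proof explicitly handles the cases where $K$ is unbounded or has empty interior, in addition to the convex body case treated in detail), so no additional reduction is required before quoting it.
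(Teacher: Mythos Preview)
Your proposal is correct and follows exactly the route the paper intends: the corollary is stated without proof precisely because it is immediate from Theorem~\ref{thm:separation2d}, the chain (h)$\Rightarrow$(s)$\Rightarrow$(p), and Remark~\ref{rem:pointseparability}, just as you outline.
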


\begin{prop}
Let $C_1 \subset \Re^k$ and $C_2 \subset \Re^m$ be convex bodies, and consider a set $S \subset \Re^k\oplus\Re^m=\Re^{k+m}$.
Let $\proj_1$ and $\proj_2$ be the orthogonal projections of $\Re^k\oplus\Re^m$ on the first and second factor, respectively.
Then,
\[
\bconv[C_1 \times C_2]{A}  =
\bconv[C_1]{\proj_1 A} \times \bconv[C_2]{\proj_2 A};
\]
\[
\sconv[C_1 \times C_2]{A} = \sconv[C_1]{\proj_1 A} \times \sconv[C_2]{\proj_2 A}.
\]
\end{prop}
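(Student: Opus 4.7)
The plan rests on the single algebraic identity $(C_1 \times C_2) + (v_1, v_2) = (C_1 + v_1) \times (C_2 + v_2)$, together with the observation that such a product translate contains $A$ if and only if $\proj_i A \subseteq C_i + v_i$ for $i=1,2$. For the ball convex hull identity I would then write
\[
\bconv[C_1 \times C_2]{A} = \bigcap_{(v_1,v_2) \in V_1 \times V_2}(C_1+v_1)\times(C_2+v_2),
\]
where $V_i = \{v_i : \proj_i A \subseteq C_i + v_i\}$; the index set is manifestly a Cartesian product because the two containment conditions decouple, and an intersection of a product family of product sets over a non-empty product index set factors as $\bigl(\bigcap_{v_1 \in V_1}(C_1+v_1)\bigr) \times \bigl(\bigcap_{v_2 \in V_2}(C_2+v_2)\bigr) = \bconv[C_1]{\proj_1 A} \times \bconv[C_2]{\proj_2 A}$.

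For the spindle convex hull, I would first specialise the ball convex hull identity to a pair of points, yielding $[(p_1,p_2),(q_1,q_2)]_{C_1 \times C_2} = [p_1,q_1]_{C_1} \times [p_2,q_2]_{C_2}$. With this the product $\sconv[C_1]{\proj_1 A} \times \sconv[C_2]{\proj_2 A}$ is $(C_1 \times C_2)$-spindle convex (the spindle of two of its points is a product of spindles, each contained in the corresponding factor) and contains $A$, whence $\sconv[C_1 \times C_2]{A} \subseteq \sconv[C_1]{\proj_1 A} \times \sconv[C_2]{\proj_2 A}$. The reverse inclusion uses the projection identity $\proj_i \sconv[C_1 \times C_2]{A} = \sconv[C_i]{\proj_i A}$, whose $\subseteq$ direction is inherited from the easy inclusion just established and whose $\supseteq$ direction follows because $\proj_i \sconv[C_1 \times C_2]{A}$ is itself $C_i$-spindle convex (the $i$-th projection of a product spindle equals the corresponding factor spindle) and contains $\proj_i A$. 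Given this identity, for any $(x_1,x_2)$ in the right-hand side one picks $y \in \Re^m$ and $z \in \Re^k$ with $(x_1,y), (z,x_2) \in \sconv[C_1 \times C_2]{A}$, and then
\[
[(x_1,y),(z,x_2)]_{C_1 \times C_2} = [x_1,z]_{C_1} \times [y,x_2]_{C_2} \subseteq \sconv[C_1 \times C_2]{A}
\]
contains $(x_1,x_2)$, finishing the proof.

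The main obstacle is the convention $[p,q]_C = \Re^n$ when $\{p,q\}$ lies in no translate of $C$: in that regime the factorisation $[(p_1,p_2),(q_1,q_2)]_{C_1 \times C_2} = [p_1,q_1]_{C_1} \times [p_2,q_2]_{C_2}$ forces the two factors to play symmetric roles, since the left-hand side is $\Re^{k+m}$ as soon as one factor's spindle is the ambient space. The natural remedy I would adopt in the proof is the hypothesis that $A$ lies in some translate of $C_1 \times C_2$ (equivalently, that $\proj_i A$ lies in some translate of $C_i$ for each $i$); this guarantees $V_1, V_2 \neq \emptyset$ above, and ensures that any two points of $\sconv[C_i]{\proj_i A}$ lie in a common translate of $C_i$, so every spindle encountered in the argument is proper and the two identities go through unconditionally.
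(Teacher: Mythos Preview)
Your argument follows the paper's proof closely: both hinge on the translate factorisation $(C_1\times C_2)+(v_1,v_2)=(C_1+v_1)\times(C_2+v_2)$, deduce the ball convex hull identity by decoupling the containment conditions, specialise to two points to get the spindle factorisation, and then finish the spindle case by picking $(x_1,y),(z,x_2)$ in $K=\sconv[C_1\times C_2]{A}$ and reading off $(x_1,x_2)$ from their product spindle. The paper states this last step as ``for any $(C_1\times C_2)$-spindle convex $K$ one has $\proj_1 K\times\proj_2 K\subseteq K$''; you route through the intermediate identity $\proj_i K=\sconv[C_i]{\proj_i A}$, which makes explicit a step the paper leaves to the reader (namely that $\proj_i K$ is $C_i$-spindle convex and hence contains $\sconv[C_i]{\proj_i A}$).

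Your caveat about the degenerate case is well taken and in fact applies to \emph{both} identities, not only the spindle one: if $\proj_1 A$ fits in no translate of $C_1$ while $\proj_2 A$ does fit in a translate of $C_2$, then the left-hand sides are $\Re^{k+m}$ but the right-hand sides are $\Re^k\times(\text{a bounded set})$. The paper's proof (and statement) tacitly assumes $V_1,V_2\neq\emptyset$; your proposed hypothesis that $A$ lie in some translate of $C_1\times C_2$ is exactly what is needed to make both identities literally true, and is a genuine improvement in precision over the paper.
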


\begin{proof}
Note that for any $x \in \Re^{k+m}$, $x + (C_1 \times C_2) = (\proj_1 x + C_1) \times (\proj_2 x + C_2)$.
Thus, $A \subseteq x+ (C_1 \times C_2)$ is equivalent to $\proj_1 A \subseteq (\proj_1 x + C_1)$
and $\proj_2 A \subseteq (\proj_2 x + C_2)$. This immediately yields the equality of the ball convex hulls.
It follows that for any points $p,q \in \Re^{k+m}$, we have $[p,q]_{C_1 \times C_2} = [\proj_1 p, \proj_1 q]_{C_1}
\times [\proj_2 p, \proj_2 q]_{C_2}$.
Hence, the right-hand side in the second equality contains the left-hand side.
Now consider any $(C_1 \times C_2)$-spindle convex set $K$.
We show that $\proj_1 K \times \proj_2 K \subseteq K$.
Consider points $p_1 \in \proj_1 K$ and $q_2 \in \proj_2 K$.
We need to show that $(p_1,q_2)\in K$.
Note that $(p_1,p_2) \in K$ and $(q_1,q_2) \in K$ for some $p_2 \in \Re^m$ and $q_1 \in \Re^k$.
Thus,  by applying the first equality for $(C_1 \times C_2)$-spindles, we obtain that $(p_1,q_2) \in K$, which is what we wanted to prove.
\end{proof}

\begin{cor}\label{cor:cubeconvexhull}
If $C$ is an $n$-dimensional axis-parallel cube, and $A \subseteq \Ren$ is any set, then both
$\bconv{A}$ and $\sconv{A}$ are either the axis-parallel box containing $A$ and minimal with respect to inclusion, or $\Ren$.
\end{cor}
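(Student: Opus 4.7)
The plan is to deduce the corollary from the preceding proposition, handling separately the case when $A$ does not fit in any translate of $C$. Since $C$ is an axis-parallel cube, write $C = I_1 \times \cdots \times I_n$, where each $I_i$ is a closed interval of the same length $\ell$. Iterating the proposition (splitting off one coordinate at a time) yields
\[
\bconv{A} = \prod_{i=1}^n \bconv[I_i]{\proj_i A}, \qquad \sconv{A} = \prod_{i=1}^n \sconv[I_i]{\proj_i A},
\]
where $\proj_i$ denotes the orthogonal projection onto the $i$-th coordinate axis.

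Reducing to the line, for $X \subseteq \Re$ with $\sup X - \inf X \leq \ell$ the translates $I_i + t$ containing $X$ are exactly those with $t \in [\sup X - \ell,\, \inf X]$, so their intersection $[\inf X,\, \sup X]$ equals $\bconv[I_i]{X}$. The same interval equals $\sconv[I_i]{X}$, since in dimension one the $I_i$-spindle of two points at Euclidean distance at most $\ell$ is their closed segment, making $[\inf X,\, \sup X]$ an $I_i$-spindle convex set containing $X$ and clearly the smallest such. Hence if every $\proj_i A$ has diameter at most $\ell$, then $A$ fits in a translate of $C$ and both hulls equal $\prod_i [\inf \proj_i A,\, \sup \proj_i A]$, the axis-parallel box containing $A$ and minimal with respect to inclusion.

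If on the other hand some $\proj_j A$ has diameter greater than $\ell$, then no translate of $C$ contains $A$ (since the $j$-th projection of any translate has length $\ell$), so $\bconv{A} = \Ren$ directly from Definition~\ref{defn:BB}; moreover a pair $p, q \in A$ with $|p_j - q_j| > \ell$ satisfies $[p, q]_C = \Ren$ by Definition~\ref{defn:C-spindle}, whence $\sconv{A} = \Ren$. The main obstacle is the ``mixed'' case in which some projections fit in translates of $I_i$ and others do not: applying the product formula naively would yield a proper subset of $\Ren$ of the form $\Re^k \times (\text{bounded box})$, and the discrepancy is resolved by returning to the two definitional observations just made.
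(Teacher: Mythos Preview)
Your proof is correct and follows exactly the route the paper intends: the corollary is placed immediately after the product proposition with no proof, so iterating that proposition and reducing to the one-dimensional case is precisely what is expected. Your explicit treatment of the ``mixed'' case---where a literal application of the product formula would yield $\Re^k \times (\text{box})$ rather than $\Re^n$---is a genuine subtlety the paper passes over in silence, and your direct appeal to Definitions~\ref{defn:C-spindle} and~\ref{defn:BB} is the right way to close it.
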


\section{Arc-distance defined by $C$}\label{sec:arcdistance}

In the theory of spindle convexity with respect to the Euclidean disk, there is a naturally arising associated distance function. This distance is called \emph{arc-distance}, and is defined for points $p,q \in \Re^2$ as the Euclidean length of a shortest unit circle arc connecting the points (cf. \cite{BCCs06} and \cite{BLNP07}). The aim of this section is to generalize this distance
for spindle convexity with respect to any origin-symmetric plane convex body.

Let $C$ be a planar $o$-symmetric convex body, that is the unit disk of a normed plane.
We recall that the \emph{$C$-length of a polygonal curve} is the sum of the $C$-distances between the consecutive pairs of points, and that the \emph{$C$-length of a curve} is the supremum, if it exists, of the $C$-lengths of the polygonal curves for which all the vertices are chosen from the curve.
If $D$ is a convex body, then the \emph{perimeter of $D$ with respect to $C$} is the $C$-length of the boundary of $D$.
We denote this quantity by $\perim_C D$.
It is known that for any plane convex bodies $C$ and $D$, we have $\perim_C D = \perim_C (\frac{1}{2}(D-D))$ (cf. \cite{FM82}), and $6 \leq \perim_C C \leq 8$ (cf.
\cite{G32} or \cite{S67}).

\begin{defn}
Let $C$ be an $o$-symmetric plane convex body, and let $p,q$ be points at $C$-distance at most two.
Then the \emph{arc-distance $\arclength_C (p,q)$ of $p,q$ with respect to $C$} is the minimum of the $C$-lengths
of the arcs, with endpoints $p$ and $q$, that are contained in $\bd(y+C)$ for some $y \in \Re^2$.
\end{defn}

\begin{defn}
Let $C$ be an $o$-symmetric plane convex body, $z\in \Re^2$ and $0 \leq \rho \leq \frac{1}{2}\perim_C C$. Then
the \emph{arc-distance disk, with respect to $C$, of center $z$ and radius $\rho$} is the set
\[
\Delta_C(z,\rho) = \{ w \in \Re^2 : \arclength_C(z,w) \leq \rho \}.
\]
Furthermore, we set $\Delta_C(\rho)=\Delta_C(o,\rho)$.
\end{defn}

Clearly, for any $C$, arc-distance disks of the same radius are translates of each other,
but those of different radii are not necessarily even similar.
We note that if $C$ is a Euclidean disk, then its arc-distance disks are Euclidean disks.

\begin{thm}\label{thm:arclength}
For any $o$-symmetric plane convex body $C$ and $0 \leq \rho \leq \frac{1}{2} \perim_C C$, the arc-distance disk
$\Delta_C(\rho)$ is convex.
\end{thm}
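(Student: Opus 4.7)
My plan is to show that $\Delta_C(\rho)$ is the closed region enclosed by a single simple closed convex curve, from which convexity is immediate. Let $\gamma: \Re/P\Ze \to \bd C$ denote a $C$-arclength parametrization of $\bd C$, with $P = \perim_C C$. Since $o \in \bd(y+C)$ forces $-y \in \bd C$, and since $C$-length of a curve is translation-invariant, the definition of $\arclength_C$ reduces to
\begin{equation*}
\Delta_C(\rho) = \big\{ \gamma(s+\ell)-\gamma(s) : s \in \Re/P\Ze,\ \ell \in [0,\rho] \big\}.
\end{equation*}
Equivalently, $\Delta_C(\rho)$ is the union over $\ell \in [0,\rho]$ of the ``chord curves'' $\sigma_\ell(s) = \gamma(s+\ell)-\gamma(s)$.

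The crux will be to prove that for each $\rho \in (0,\tfrac{1}{2}\perim_C C]$ the outermost chord curve $\sigma_\rho$ is a simple closed convex Jordan curve. I would first assume $C$ is smooth and strictly convex, so that $\sigma_\rho$ is smooth and $\sigma_\rho'(s) = \gamma'(s+\rho)-\gamma'(s)$ is a chord of $\bd C$, since $\gamma'(s) \in \bd C$. As $s$ varies, both endpoints of this chord traverse $\bd C$ counter-clockwise; the origin-symmetry of $C$ gives $\gamma(s + \tfrac{P}{2}) = -\gamma(s)$, so for $\rho < P/2$ the subarc of $\bd C$ between them sweeps less than half the boundary, and the corresponding range of Euclidean tangent angles stays within an interval of length less than $\pi$. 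A direct computation in the Euclidean-arclength parametrization of $\bd C$ shows that within such a range the direction of a chord of $\bd C$ has non-negative partial derivatives with respect to either endpoint moving counter-clockwise, with strict positivity in the interior. Since both endpoints' Euclidean-arclength parameters are strictly increasing in $s$ (the shift $s \mapsto s+\rho$ being a rigid reparametrization), the chord direction $\theta(s)$ is strictly monotone in $s$, so $\sigma_\rho$ has monotone tangent direction of total rotation $2\pi$, i.e., is a convex Jordan curve. The non-smooth case is obtained by Hausdorff approximation of $C$ by smooth strictly convex origin-symmetric bodies, since convexity of closed sets is preserved under Hausdorff limits. This monotonicity argument is the main obstacle; preventing any reversal of the tangent uses both the synchronisation of endpoints and the bound $\rho \leq P/2$ in an essential way, as outside this range explicit examples show that $\sigma_\rho$ can fail to be convex.

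Finally, I would identify $\Delta_C(\rho)$ with the closed region $R_\rho$ bounded by $\sigma_\rho$. The chord map $\Phi(s,\ell) = \gamma(s+\ell)-\gamma(s)$ sends $\ell = 0$ to the origin and $\ell = \rho$ to $\sigma_\rho$. In the smooth strictly convex case, the partials $\gamma'(s+\ell)$ and $\gamma'(s+\ell)-\gamma'(s)$ of $\Phi$ are linearly independent for $\ell \in (0,P/2)$ (using strict convexity and the symmetry relation $\gamma'(s+P/2) = -\gamma'(s)$), so $\Phi$ is open on the interior of its domain, forcing $\bd \Delta_C(\rho) \subseteq \sigma_\rho \cup \{o\}$; since $\Delta_C(\rho)$ contains a neighbourhood of $o$ whenever $\rho>0$, the origin is interior and thus $\bd \Delta_C(\rho) \subseteq \sigma_\rho$. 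Compactness and connectedness of $\Delta_C(\rho)$ then force $\Delta_C(\rho) = R_\rho$, which is convex.
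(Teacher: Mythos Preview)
Your approach is correct but genuinely different from the paper's. The paper approximates $C$ by convex \emph{polygons} and observes that in that case $\bd\Delta_C(\rho)$ is itself a polygonal curve with at most $2m$ vertices; convexity is then checked locally at each vertex by showing that a certain quadruple of points is in convex position, a computation that reduces directly to the convexity of $C$. You instead approximate $C$ by \emph{smooth, strictly convex} bodies and argue globally: the key observation that $\sigma_\rho'(s)=\gamma'(s+\rho)-\gamma'(s)$ is a chord vector of $\bd C$ (since $C$-arclength parametrisation forces $\gamma'(s)\in\bd C$) lets you deduce tangent monotonicity of $\sigma_\rho$ from a chord-direction monotonicity lemma for convex curves, and hence convexity of the curve.

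Each route has its own payoff. The paper's polygon argument is entirely elementary---no differentiation, just a four-point convex-position check---and it yields the side information that for polygonal $C$ the arc-distance disk is itself a polygon with a controlled number of vertices. Your argument is more analytic but gives a cleaner global picture of $\bd\Delta_C(\rho)$ as the image of the chord map, and your identification $\Delta_C(\rho)=R_\rho$ via the open-mapping step (Jacobian nonvanishing for $0<\ell<P/2$) is a useful structural fact in its own right; it also meshes naturally with the smoothness remark following the theorem. Both proofs are equally cavalier about the Hausdorff-limit step, so neither has an advantage there. Your final ``compactness and connectedness'' line is terse but recoverable: once $\bd\Delta_C(\rho)\subseteq\sigma_\rho$, the bounded and unbounded Jordan components of $\Re^2\setminus\sigma_\rho$ are each either entirely inside or entirely outside $\Delta_C(\rho)$, and boundedness of $\Delta_C(\rho)$ together with $o\in\inter\Delta_C(\rho)\cap\inter R_\rho$ pins down both.
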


\begin{proof}
Note that by compactness arguments, it is sufficient to prove the assertion for the case that $C$ is a convex polygon
with, say, $m$ vertices and for values of $\rho$ such that $0 \leq \rho \leq \frac{1}{2}\perim_C C$  is not equal to the sum of some sides of $C$.
In this case no chord of $C$ connecting two vertices determines an arc of length $\rho$.

Let us move a point $p(t)$ around on $\bd C$ at a constant speed measured in $C$-distance, and consider
the point $q(t)$ such that $\arclength_C(p(t),q(t)) =\rho$.
Note that on any side of $C$, the points $p(t)$ and $q(t)$ move at a constant speed also in the Euclidean metric,
and their Euclidean speed is proportional to the lengths of the longest chords of $C$ parallel to the corresponding
edges of $C$.
Thus, the vector $q(t)-p(t)$ is a linear function of $t$ if $p(t)$ and $q(t)$ are on different edges, and a constant
if they are on the same edge, which yields that $Q=\bd \Delta_C(\rho)$ is a (starlike) polygonal curve with at most $2m$ vertices.

\begin{figure}[ht]
\begin{center}
\includegraphics[width=0.45\textwidth]{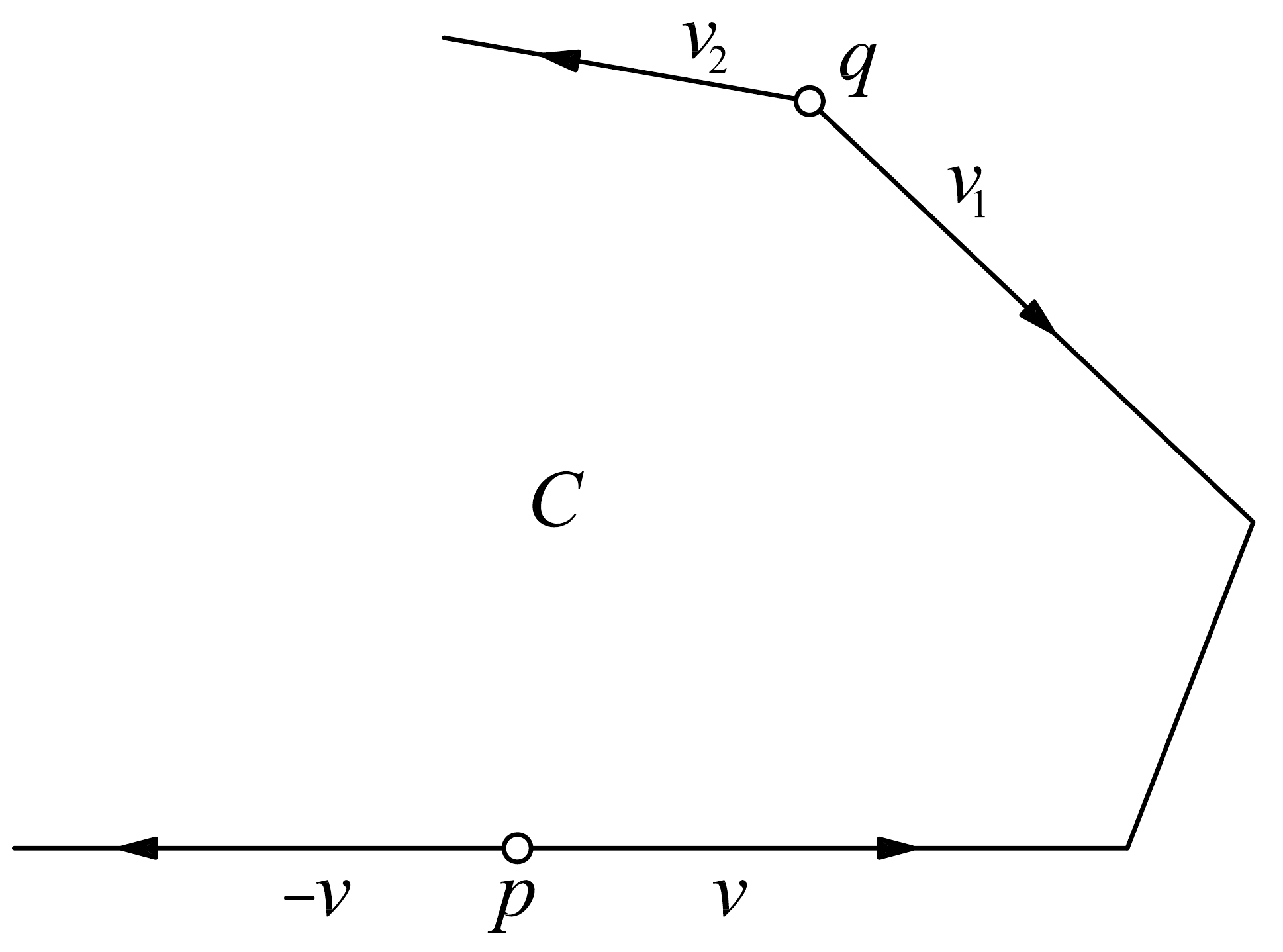}
\caption[]{Polygon $C$ in the proof of Theorem~\ref{thm:arclength}}
\label{fig:arclength}
\end{center}
\end{figure}

We show that $Q$ is convex.
Consider four points: a vertex of $Q$, one point on each edge meeting at that vertex and the origin.
Since $Q$ is clearly starlike with respect to the origin, it is sufficient to prove that 
for any choice of the vertex (and the points on the edges), these four points are in convex position.

Any vertex of $Q$ is of the form $q-p$, where $q$ is a vertex of $C$, and $p$ is a point in the relative interior of some edge of $C$. 

Consider the sufficiently small vectors $v$, $v_1$ and $v_2$ whose Euclidean lengths are proportional to the Euclidean lengths of the longest chords
of $C$ parallel to them, and such that $p-v$, $p$, $p+v$ are on the same edge of $C$,
and $q+v_1$ and $q+v_2$ are on the two consecutive edges of $C$ meeting at $q$.
We choose our notation in a way that the line passing through $[p,q]$ separates $p-v$ and $q+v_2$
from $p+v$ and $q+v_1$, and that $v$, $-v_1$ and $v_2$ are in counterclockwise order
(cf. Figure~\ref{fig:arclength}).
Note that as $C$ is convex, the segment $[q+v_1,q+v_2]$ intersects $[p,q]$.

Now, to show that $Q$ is convex, we need to show
that the points $o$,  $q+v_1-(p-v)$, $q-p$ and $q+v_2-(p+v)$ are in convex position.
Adding $p$ to each point, we are considering the quadrilateral with vertices
$p$, $q+v_1+v$, $q$ and $q+v_2-v$. It is clearly convex, by the convexity of $C$.
\end{proof}

We note that if $C$ is not $o$-symmetric, we may define a non-symmetric arc-distance, for which we may prove the assertion of Theorem~\ref{thm:arclength} using a similar technique.

\begin{rem}
If $C$ is smooth, then $\Delta_C(\rho)$ is smooth for any positive value of $\rho$.
\end{rem}

\begin{proof}
Consider the $C$-arc-length parametrization $\Gamma : [0,\alpha] \to \Re^2$ of $\bd C$.
Then $\bd \Delta_C(\rho)$ is the graph of the curve $\tau \mapsto \Gamma(\tau+\rho)- \Gamma(\tau)$.
If $\Gamma$ is differentiable, then so is this function.
\end{proof}

Our next corollary follows from Theorem~\ref{thm:arclength} and the fact that
$\Delta_C(\rho_1) \subset \Delta_C(\rho_2)$ for any $\rho_1 < \rho_2$.

\begin{cor}\label{cor:quasiconvex}
For any $x,y,z \in \Re^2$ and an $o$-symmetric plane convex body $C$, the function $\tau \mapsto \arclength_C(x,y+\tau z)$ is quasiconvex on its domain; that is:
it consists of a strictly decreasing, possibly a constant and then a strictly increasing interval.
\end{cor}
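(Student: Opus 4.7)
The plan is to deduce quasiconvexity of $f(\tau) := \arclength_C(x, y + \tau z)$ directly from Theorem~\ref{thm:arclength}, and then sharpen to the ``strictly decreasing / constant / strictly increasing'' form by invoking the strict nesting $\Delta_C(\rho_1) \subsetneq \Delta_C(\rho_2)$ for $\rho_1 < \rho_2$.

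First, I observe that arc-distance is translation-invariant, so for every $\rho \ge 0$,
\[
\{\tau\in\mathrm{dom}(f) : f(\tau)\le\rho\}=\{\tau : (y-x)+\tau z\in\Delta_C(\rho)\},
\]
which is the preimage of the convex set $\Delta_C(\rho)$ (Theorem~\ref{thm:arclength}) under the affine map $\tau\mapsto(y-x)+\tau z$, hence an interval. This gives quasiconvexity at once. Let $\rho^* := \min f$, attained by continuity, and set $J := f^{-1}(\rho^*)$; then $J$ is an interval (the $\rho^*$-sublevel set). Standard quasiconvex reasoning now yields that $f$ is non-increasing to the left of $J$ and non-decreasing to the right.

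To upgrade monotonicity to strict, I argue by contradiction. Suppose $\tau_1<\tau_2<\inf J$ satisfy $f(\tau_1)=f(\tau_2)=\eta$; then non-increase combined with quasiconvexity forces $f\equiv\eta$ on $[\tau_1,\tau_2]$, so the non-degenerate segment $\{(y-x)+\tau z:\tau\in[\tau_1,\tau_2]\}$ lies in $\bd\Delta_C(\eta)$. Consequently the line $\ell=\{(y-x)+\tau z:\tau\in\Re\}$ meets $\bd\Delta_C(\eta)$ in a non-degenerate segment, hence is a supporting line of the convex body $\Delta_C(\eta)$; in particular $\ell\cap\inter\Delta_C(\eta)=\emptyset$. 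But for any $\tau_0\in J$ we have $f(\tau_0)=\rho^*<\eta$, so by continuity of $\arclength_C$ the point $(y-x)+\tau_0 z$ lies in $\inter\Delta_C(\eta)$, a contradiction. The symmetric argument handles the right of $J$.

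The only mildly delicate point is this upgrade to strict monotonicity: convexity of sublevel sets alone does not rule out intermediate plateaus, but the observation that a line meeting $\bd\Delta_C(\eta)$ in a non-degenerate segment must support $\Delta_C(\eta)$ (and so miss its interior) contradicts the existence of a lower-value point on $\ell$ near $J$. Everything else is direct bookkeeping from Theorem~\ref{thm:arclength} and the monotonicity $\rho_1<\rho_2\Rightarrow\Delta_C(\rho_1)\subsetneq\Delta_C(\rho_2)$.
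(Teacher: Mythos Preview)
Your proposal is correct and follows essentially the same approach as the paper. The paper's own argument is a single sentence: the corollary follows from Theorem~\ref{thm:arclength} together with the nesting $\Delta_C(\rho_1)\subset\Delta_C(\rho_2)$ for $\rho_1<\rho_2$. You use precisely these two ingredients and spell out the details the paper leaves implicit.

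One small point worth tightening: when you pass from ``$f\equiv\eta$ on $[\tau_1,\tau_2]$'' to ``the corresponding segment lies in $\bd\Delta_C(\eta)$,'' you are implicitly using that a point at arc-distance exactly $\eta$ cannot lie in $\inter\Delta_C(\eta)$. This is true (for $0<\eta<\tfrac12\perim_C C$), and follows either from the explicit description of $\bd\Delta_C(\rho)$ in the proof of Theorem~\ref{thm:arclength} as the curve $\{q(t)-p(t)\}$, or from the continuity of $\arclength_C$ that you already invoke elsewhere; but it is worth a clause, since convexity of the sublevel sets alone does not exclude $\{\arclength_C=\eta\}$ from having interior in $\Delta_C(\eta)$.
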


We prove the following version of the triangle inequality for arc-distance, which, for the Euclidean case, appeared first 
as Lemma~1 in \cite{BCCs06}.

\begin{thm}\label{thm:triangle}
Let $C$ be an $o$-symmetric plane convex body, and let $x,y,z \in \Re^2$ be points such that each pair has a $C$-arc-distance.
\begin{enumerate}
\item[(1)] If $y \in \inter [x,z]_C$, then $\arclength_C(x,y)+\arclength_C(y,z) \leq \arclength_C(x,z)$,
\item[(2)] if $y \in \bd [x,z]_C$, then $\arclength_C(x,y)+\arclength_C(y,z) = \arclength_C(x,z)$, and
\item[(3)] if $y \notin [x,z]_C$ and $C$ is smooth, then $\arclength_C(x,y)+\arclength_C(y,z) \geq \arclength_C(x,z)$.
\end{enumerate}
\end{thm}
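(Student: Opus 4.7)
Set $\rho_0 = \arclength_C(x,z)$. Central symmetry $C = -C$ implies that $\bd[x,z]_C$ is the union of two short arcs $\sigma_\pm \subset \bd(y_\pm + C)$, where $y_+ + y_- = x+z$ and each $\sigma_\pm$ has $C$-length exactly $\rho_0$. I would begin with (2). For $y \in \sigma_+$, the sub-arc of $\sigma_+$ from $x$ to $y$ has $C$-length $a$ and from $y$ to $z$ has $C$-length $b = \rho_0 - a$; since $a, b \leq \rho_0 \leq \frac{1}{2}\perim_C C$, these are the short arcs on $\bd(y_+ + C)$ for their respective endpoint pairs. To show they are optimal, I would use that any valid translate $w + C$ with $x, y \in \bd(w + C)$ satisfies $w \in (x - \bd C) \cap (y - \bd C)$, a set invariant under central reflection through $(x+y)/2$. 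Since $C$-length is preserved by this reflection (using $-C = C$), the short arcs on paired translates have equal $C$-length; as $y_+ + C$ lies in such a pair, its short sub-arc realizes $\arclength_C(x,y) = a$. Symmetrically $\arclength_C(y,z) = b$, which gives (2).

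For (1), let $y \in \inter[x,z]_C$, so that $y$ lies in the interior of both $y_\pm + C$. The plan is to slide $y_+$ continuously along the curve $x - \bd C$, maintaining $x \in \bd(w + C)$, until $\bd(w + C)$ first passes through $y$; this produces a translate $w_1 + C$ and a short arc $\gamma_1$ from $x$ to $y$ of $C$-length $\arclength_C(x, y)$. A symmetric slide along $z - \bd C$ starting from $y_+$ gives $w_2 + C$ and a short arc $\gamma_2$ from $y$ to $z$. I would then track the total $C$-length of $\gamma_1 \cup \gamma_2$ during the deformation: it starts at $\rho_0$ (when $w_1 = w_2 = y_+$ and the two arcs together trace $\sigma_+$) and ends when the arcs meet at $y$. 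Showing monotone non-increase would yield $\arclength_C(x,y) + \arclength_C(y,z) \leq \rho_0$; the convexity of $\Delta_C(\rho)$ (Theorem~\ref{thm:arclength}) and the unimodality of arc-distance along straight lines (Corollary~\ref{cor:quasiconvex}) would supply the needed control.

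For (3), let $y \notin [x,z]_C$ with $C$ smooth. Let $\gamma_1 \subset \bd(w_1 + C)$ and $\gamma_2 \subset \bd(w_2 + C)$ realize $\arclength_C(x,y)$ and $\arclength_C(y,z)$. If $w_1 = w_2$, then $\gamma_1 \cup \gamma_2$ is a single arc from $x$ to $z$ on one translate of $\bd C$, hence of $C$-length at least $\rho_0$. Otherwise smoothness of $\bd C$ forces the tangent directions of $\gamma_1$ and $\gamma_2$ at $y$ to differ (a common tangent would uniquely determine $w$). I would deform $(w_1, w_2)$ continuously toward coincidence, keeping $x \in \bd(w_1 + C)$ and $z \in \bd(w_2 + C)$ and letting the two arcs meet at a moving kink $y(t)$ starting at $y$; then show that the total $C$-length is monotone non-increasing until $w_1 = w_2$, which would yield $\arclength_C(x,y) + \arclength_C(y,z) \geq \rho_0$.

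The main obstacle in both (1) and (3) will be the monotonicity of total $C$-length under the chosen deformation, since the sum of two arc-distance functions is not quasiconvex in general. I would address this by computing the total length in the spirit of the proof of Theorem~\ref{thm:arclength} (tracking $q(t) - p(t)$ as $p(t)$ moves at constant $C$-speed along $\bd C$) and verifying that its derivative has the correct sign, exploiting that the deformation takes place inside $[x,z]_C$ for (1) and outside it for (3). A cleaner alternative for (1) would be to prove that the sublevel set $\{w : \arclength_C(x,w) + \arclength_C(w,z) \leq \rho_0\}$ is convex, or at least convex along every chord of $[x,z]_C$, and to conclude from (2) that it contains $[x,z]_C$.
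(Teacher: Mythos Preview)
Your sketch for (2) is reasonable, though more elaborate than needed. The real issues are in (1) and (3), where your plan hinges on a monotonicity-under-deformation claim that you do not prove and that is not obviously true. You correctly identify this as the obstacle, but the proposed fix---computing derivatives of total $C$-length ``in the spirit of Theorem~\ref{thm:arclength}''---does not come with any mechanism for controlling the sign. Your cleaner alternative for (1) has the same defect: the sublevel set $\{w:\arclength_C(x,w)+\arclength_C(w,z)\le\rho_0\}$ is a sublevel set of a \emph{sum} of quasiconvex functions, and such sums are not quasiconvex in general, so there is no reason to expect this set to be convex, or even convex along chords of $[x,z]_C$.

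The paper avoids working with the sum altogether. The missing idea is this: through any $y$ one can find a line $L$ meeting $\bd[x,z]_C$ in two points $a,b$ with $\arclength_C(x,a)=\arclength_C(x,b)$ (such a line exists by an intermediate-value argument as one sweeps lines through $y$). By (2) this forces $\arclength_C(z,a)=\arclength_C(z,b)$ as well. Now Corollary~\ref{cor:quasiconvex} applies to each of $\arclength_C(x,\cdot)$ and $\arclength_C(z,\cdot)$ \emph{separately} along $L$: each takes equal values at $a$ and $b$, so each attains its minimum on $[a,b]$ and is monotone outside. If $y\in\inter[x,z]_C$ then $y\in(a,b)$, giving $\arclength_C(x,y)\le\arclength_C(x,a)$ and $\arclength_C(z,y)\le\arclength_C(z,a)$, and adding yields (1). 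For (3), the work is to show that when $C$ is smooth the family of such lines $L(\tau)$ sweeps out a strip covering $\Delta_C(x,\rho_0)\cap\Delta_C(z,\rho_0)$, so that $y$ lies on some $L(\tau)$ with $y\notin[a,b]$; then the same quasiconvexity argument gives the reverse inequalities.
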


\begin{proof}
The assertion of (2) is trivial. Assume that $y \in \inter [x,z]_C$. Clearly, there is a line $L$ 
containing $y$ such that $L \cap \bd [x,z]_C = \{ a,b \}$ with $\arclength_C(x,a)=\arclength_C(x,b)$ and $\arclength_C(z,a)=\arclength_C(z,b)$.
Thus, the assertion in this case is a consequence of Corollary~\ref{cor:quasiconvex}.

Assume that $C$ is smooth and $y \notin [x,z]_C$.
If $\arclength_C(x,y) \geq \arclength_C(x,z)$, then there is nothing to prove,
and thus, we may assume that $y \in \inter \Delta_C(x,\arclength_C(x,z))$.
Similarly, we may assume that $y \in \inter \Delta_C(z,\arclength_C(x,z))$.
For any $0 < \tau < \arclength_C(x,z)$, let $L(\tau)$ denote the line for which $L(\tau) \cap \bd [x,z]_C = \{ c,d \}$
with $\arclength_C(x,c) = \arclength_C(x,d) = \tau$, and let $R$ be the region
swept through by the lines $L(\tau)$, $0 < \tau< \arclength_C(x,z)$.
Since $C$ is smooth, the limit $L_+$ of $L(\tau)$ as $\tau$ approaches $\arclength(x,z)$ is the supporting line of $\Delta_C(x,\arclength(x,z))$ at $z$.
Similarly, if $\tau \to 0$, the limit $L_-$ of $L(\tau)$ is the supporting line of $\Delta_C(z,\arclength_C(x,z))$ at $x$.
Since $\Delta_C(x,\arclength_C(x,z)) \cap \Delta_C(z,\arclength_C(x,z))$ lies between the parallel lines $L_+$ and $L_-$, and this open unbounded strip is clearly contained in $R$, $y \in L(\tau)$ for some value of $\tau$, and (3) follows from Corollary~\ref{cor:quasiconvex}.
\end{proof}

\begin{ex}
Let $C$ be the unit ball of the $l_\infty$ norm in $\Re^2$. Then
\[
\Delta(\rho) = \left\{
\begin{array}{ll}
\{ (x_1,x_2) \in \Re^2 : |x_1| + |x_2| \leq \rho \}, & \hbox{if } 0 < \rho \leq 2,\\
\{ (x_1,x_2) \in \Re^2 : |x_1|+ |x_2| \leq \rho, |x| \leq 2, |y| \leq 2 \}, & \hbox{if } 2 < \rho \leq 4.
\end{array}
\right.
\]
\end{ex}

In this example, for any two points $x,z$ with $\arclength_C(x,z) \leq 2$,
we have $\arclength_C(x,y) + \arclength_C(y,z) = \arclength(x,z)$ for any $y \in [x,z]_C$.
Furthermore, if we replace the corners of $C$ with small circle arcs, then
the boundary of the arc-distance balls of the obtained body consists 'almost only' of segments that are parallel to
the segments in $\bd \Delta(\rho)$.
Thus, we may create a smooth convex body and points $x,y,z$ with $y \notin \inter [x,z]_C$
such that $\arclength_C(x,y)+ \arclength_C(y,z)=\arclength_C(x,z)$.

We propose the following questions.

\begin{prob}
Can we drop the smoothness condition in part (3) of Theorem~\ref{thm:triangle}?
\end{prob}

\begin{prob}
Prove or disprove that if $C$ is strictly convex, then $\Delta_C(\rho)$ is strictly convex
for any $0 < \rho \leq \frac{1}{2} \perim_C C$.
\end{prob}

\begin{prob}
Prove or disprove that if $C$ is strictly convex, then the inequalities in (1) and (3) of Theorem~\ref{thm:triangle} are strict.
\end{prob}

\section{Carath\'eodory numbers}\label{sec:caratheodory}

Now we recall the notion of the Carath\'eodory number of a convexity space 
(cf. \cite{L82}, \cite{R70} and \cite{S75}).

\begin{defn}\label{defn:caratheodory}
Let $(X,\GG)$ be a convexity space (for the definition, see Section~\ref{sec:notation}).
The \emph{Carath\'eodory number $\Car \GG$ of $\GG$} is the smallest positive integer $k$ such that
for any  $V \subseteq X$ and $p \in \conv_\GG (V)$ there is a set $W \subseteq V$ with $\card W \leq k$ and with $p \in \conv_\GG (W)$. 
If no such positive integer exists, we set $\Car\GG= \infty$ .
\end{defn}

\begin{defn}
Let $C \subset \Re^n$ be a convex body, and let $\GG_1$ (respectively $\GG_2$) be the family of closed
$C$-spindle convex sets (respectively, the $C$-ball convex sets) in $\Re^n$.
Then we call $\Car \GG_1$ (respectively $\Car\GG_2$) the \emph{spindle Carath\'eodory number} (respectively,
\emph{ball Carath\'eodory number}) of $C$, and denote it by $\Car^s C$ (respectively, by $\Car^b C$).
\end{defn}

These numbers were determined in \cite{BLNP07} for the Euclidean ball $B^n$ as $C$.

\begin{thm}
Let $C \subset \Re^2$ be a plane convex body. If $C$ is a parallelogram, then both
Carath\'eodory numbers of $C$ are two, otherwise both are three.
\end{thm}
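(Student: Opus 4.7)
The theorem has two assertions (equality $\Car^s C = \Car^b C$, and the value being 2 for parallelograms and 3 otherwise), and I would address them in four steps. First, by the corollary immediately following Theorem~\ref{thm:separation2d}, every closed $C$-spindle convex set in $\Re^2$ is $C$-ball convex, while conversely ball convexity trivially entails closedness and $C$-spindle convexity (each translate of $C$ is spindle convex and the property passes to intersections). Hence the two families of sets coincide, their hull operations agree, and $\Car^s C = \Car^b C$; it suffices to determine the ball value. When $C$ is a parallelogram, an affine transformation reduces $C$ to $[-1,1]^2$ and Corollary~\ref{cor:cubeconvexhull} identifies $\bconv{A}$ with the minimal axis-parallel box containing $A$. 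A four-quadrant argument around $p \in \bconv{V}$ (if opposite quadrants both contain points of $V$ one uses them; otherwise two adjacent quadrants still bracket $p$ in both coordinates) furnishes two points $v, w \in V$ whose bounding box contains $p$, so $\Car^b C \le 2$; the reverse inequality is immediate since $\bconv{\{v\}} = \{v\}$ for every $v$.

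For the non-parallelogram case, the lower bound $\Car^b C \ge 3$ follows by exhibiting three points $v_1, v_2, v_3 \in \bd C$ such that $C$ is the only translate of $C$ containing all three (for example, the midpoints of the sides when $C$ is a triangle; in general, three boundary points whose outer support directions positively span $\Re^2$). Then $\bconv{\{v_1, v_2, v_3\}} = C$, while each pairwise $C$-spindle $[v_i, v_j]_C$ is strictly contained in $C$. The non-parallelogram assumption is precisely the condition ensuring that the union of the three spindles is still a proper subset of $C$ --- for a parallelogram a pair of opposite vertices has spindle equal to the whole body --- so some interior point of $C$ lies outside this union and witnesses $\Car^b C \ge 3$.

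The upper bound $\Car^b C \le 3$ requires two reductions. First, reduce to a finite $V_0 \subseteq V$ with $p \in \bconv{V_0}$ via the compactness argument from the proof of Theorem~\ref{thm:convstruct}: when $p \in \inter \bconv{V}$, the open cover $\{\Re^2 \setminus (v - C) : v \in V\}$ of the compact set $p - \bd C$ admits a finite subcover, and the boundary case $p \in \bd \bconv{V}$ is handled by approximating $p$ by interior points and extracting a convergent subsequence of $\le 3$-element witnesses. Second, reduce $V_0$ to size at most three: take a minimal $V_0$ and suppose $|V_0| \ge 4$. For each index $i$ pick $x_i \in \bigcap_{j \ne i}(v_j - C) \setminus (p - C)$, noting that minimality forces $x_i \notin v_i - C$. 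Radon's theorem applied to $\{x_1, x_2, x_3, x_4\} \subset \Re^2$ yields either a $(1{+}3)$-partition, immediately contradicting convexity of $v_i - C$ (the point in the convex hull of the other three is forced into $v_i - C$), or a $(2{+}2)$-partition in which the diagonal intersection point lies in $\Bm(V_0) \subseteq p - C$. The $(2{+}2)$ case is the main obstacle: it admits no direct convexity contradiction and must be excluded using the non-parallelogram structure of $C$, for instance by analyzing the supporting lines of $p - C$ at the two boundary-crossings of each diagonal segment $[x_i, x_k]$ --- parallelograms are precisely the bodies where this analysis degenerates, consistent with the sharper bound $\Car^b C = 2$ established in the first step.
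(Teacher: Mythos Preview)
Your equality $\Car^s C=\Car^b C$ via Theorem~\ref{thm:separation2d} matches the paper, and your parallelogram step is fine (the paper defers it to Proposition~\ref{prop:cubecaratheodory}, which quotes a result of Lay, but your quadrant argument also works in dimension two).

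The genuine gap is in the upper bound $\Car^b C\le 3$. Your Radon reduction handles the $(1{+}3)$ partition cleanly, but in the $(2{+}2)$ case you only obtain that the crossing point $z$ lies in $\Bm(V_0)\subseteq p-C$; since $z$ is not one of the $x_i$, this is no contradiction, and you say so yourself. The escape route you suggest---invoking the non-parallelogram hypothesis---is a misdiagnosis: the bound $\Car^b C\le 3$ holds for \emph{every} plane convex body, parallelograms included (where it is simply not sharp). So the $(2{+}2)$ case cannot be disposed of by excluding parallelograms, and your supporting-line sketch does not supply an actual argument. As written, the upper bound is unproved.

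The paper avoids Radon entirely and argues geometrically: for $p\in\bd\,\sconv{X}$ it locates a translate $u+C$ whose boundary carries $p$ together with two points $z_1,z_2\in X$ positioned so that $p\in[z_1,z_2]_C$; interior points then need one additional point of $X$. Your lower bound idea (three boundary points of $C$ whose support directions positively span $\Re^2$) is different from the paper's---the paper instead takes the three vertices of a triangle \emph{containing} $C$, tangent at smooth points, and dilates $C$ by a large $\lambda$ so that the pairwise $(\lambda C)$-spindles become nearly straight and miss the centroid. Your version may be salvageable, but the claim that the union of the three spindles is a proper subset of $C$ whenever $C$ is not a parallelogram is asserted, not proved, and it depends on the particular choice of the $v_i$.
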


\begin{proof}
By Theorem~\ref{thm:separation2d}, for any $C \subset \Re^2$, a closed set is $C$-spindle convex
if, and only if it is $C$-ball convex.
Thus, the two Carath\'eodory numbers of $C$ are equal.

Let $X \subset \Re^2$ be any closed set. The $C$-ball convex hull of $X$ is the intersection of
all the translates of $C$ that contain $X$.
If $X$ is a singleton, the assertion immediately follows, and thus we assume that $\card X > 1$.
If no translate of $C$ contains $X$, then, by Helly's theorem, there are at most three points of $X$
that are not contained in any translate of $C$.

Assume that there is a translate of $C$ containing $X$.
Consider a point $p \in \bd \sconv{X}$. If $p \in X$, we are done.
Assume that $p \notin X$.
We leave it to the reader to show that there is a translate $u+C$ with the property that
$\bd (u+C)$ contains $p$ and two distinct points $z_1, z_2 \in X$, such that the $C$-distance of $z_1$ and $z_2$ is 2, or,
if the $C$-distance of $z_1$ and $z_2$
is less than two, then the connected component of $(u+C) \setminus [z_1,z_2]$ containing $p$ does not contain points
at $C$-distance two. Without loss of generality, we may assume that the open arc in $\bd (u+C)$
with endpoints $z_1$ and $z_2$ that contains $p$ is disjoint from $X$.

If the $C$-distance of $z_1$ and $z_2$ is less than two, then, clearly, $p$ is contained in any translate of $C$
that contains $z_1$ and $z_2$, or, in other words, $p \in [z_1,z_2]_C$.
If the $C$-distance of $z_1$ and $z_2$ is equal to two, there are two parallel lines $L_1$ and $L_2$ that
support $u+C$ at $z_1$ and $z_2$, respectively.
Since $y \in \sconv{X}$, we have for $i=1,2$ that $z_i$ is the endpoint of the segment
$L_i \cap (u+C)$ closer to $p$.
Thus, $p \in [z_1,z_2]_C$.

Now consider a point $p \in \inter \sconv{X}$.
Let $v \in X$ be arbitrary. Choose a point $z \in \bd \sconv{X}$ such that $p \in [v,z]$.
Then, by the previous paragraph, there are points $z_1, z_2 \in X$ such that $z \in [z_1,z_2]_C$,
and clearly, $p \in \sconv{\{ v,z_1,z_2\}}$.

Finally, assume that $C$ is not a parallelogram.
Note that in that case there are three smooth points in $\bd C$ such that the unique
lines supporting $C$ at them are the sidelines of a triangle containing $C$.
Let $X$ be the vertex set of this triangle.
Observe that for a sufficiently large $\lambda > 0$, the centroid $c$ of $X$ is not contained
in the $(\lambda C)$-spindles determined by any two points of $X$.
Since $c \in \conv (X) \subseteq \sconv[\lambda C]{X}$ for any value of $\lambda$, we have that
both Carath\'eodory numbers of $C$ are three.
The observation that they are two if $C$ is a parallelogram follows from the next theorem.
\end{proof}

\begin{prop}\label{prop:cubecaratheodory}
If $C$ is an $n$-dimensional parallelotope, then $\Car^s C = \Car^b C = n$.
\end{prop}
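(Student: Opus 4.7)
The plan is to reduce the proposition to a purely combinatorial statement about axis-parallel bounding boxes of subsets of $\Ren$. Ball convexity with respect to $C$ is affinely equivariant (an invertible affine map sends translates of $C$ bijectively to translates of the image, and hence intersections to intersections), so both Carath\'eodory numbers depend only on the affine type of $C$. Since every parallelotope is affinely equivalent to the axis-parallel standard cube $[0,1]^n$, I may assume $C = [0,1]^n$. Corollary~\ref{cor:cubeconvexhull} then identifies $\bconv{A} = \sconv{A}$ with the smallest axis-parallel box containing $A$; in particular $\Car^s C = \Car^b C$, and it suffices to show this common value equals $n$.

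The lower bound $\Car^b C \geq n$ is witnessed by $V = \{e_1,\ldots,e_n\}$ and $p = (1/n,\ldots,1/n)$: clearly $p \in [0,1]^n = \bconv{V}$, but for any $W \subsetneq V$ some $e_i$ is missing, so every $w \in W$ satisfies $w_i = 0$, which forces the $i$-th projection of the bounding box of $W$ to be $\{0\}$ and hence to miss $p_i = 1/n$.

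For the upper bound, I translate so $p = 0$ and observe that the origin lies in the bounding box of a finite $W$ iff for every coordinate $i$ the two sets $W_i^{\geq} := \{w \in W : w_i \geq 0\}$ and $W_i^{\leq} := \{w \in W : w_i \leq 0\}$ are both non-empty. First I produce a finite witness $W_0 \subseteq A$ of size at most $2n$ by picking, for each coordinate $i$, one element of $A$ with non-negative $i$-th entry and one with non-positive $i$-th entry. Then I shrink $W_0$ using the following observation: call $w \in W$ \emph{essential} if $W_i^{\geq} = \{w\}$ or $W_i^{\leq} = \{w\}$ for some $i$, so that removing $w$ would destroy the bounding-box property. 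The key point is that when $|W| \geq 3$, each coordinate contributes at most one essential element, for otherwise $W_i^{\geq} = \{w\}$ and $W_i^{\leq} = \{w'\}$ would force $W = \{w,w'\}$ of size $\leq 2$. Hence at most $n$ elements of $W$ are essential, and as long as $|W| > n$ a non-essential element can be removed without changing the fact that the origin lies in the bounding box. Iterating, $|W|$ decreases to at most $n$, yielding $\Car^b C \leq n$.

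The heart of the argument is the shrinking procedure, and the single nontrivial step is the ``at most one essential element per coordinate'' count above; the rest (affine reduction to the standard cube, the simplex-vertex lower-bound example, and the iterative removal) is routine bookkeeping. The only delicate point is the edge case $n = 1$, where the argument would give one instead of the correct value two; but this is already absorbed by the parallelogram case treated in the preceding theorem, so only $n \geq 2$ requires genuinely new work.
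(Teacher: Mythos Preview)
Your proof tracks the paper's closely: the affine reduction to the standard cube, the appeal to Corollary~\ref{cor:cubeconvexhull} to identify both hulls with the minimal axis-parallel bounding box, and the lower-bound witness $\{e_1,\ldots,e_n\}$ (which is exactly the paper's ``vertices adjacent to a fixed vertex'' example) are all the same. The one genuine difference is in the upper bound. The paper invokes a theorem of Lay~\cite{L80} as a black box to extract a subset of size at most $n$ whose bounding box still contains the given point; you instead supply a direct combinatorial argument via the essential/non-essential pruning. That argument is correct for $n\geq 2$ (the key count ``at most one essential element per coordinate when $|W|\geq 3$'' is valid, since $W_i^{\geq}\cup W_i^{\leq}=W$), and it amounts to a short self-contained proof of the needed special case of Lay's result. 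So your route is more elementary, at the cost of a few extra lines.

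Two minor loose ends. You do not treat the case $\bconv{A}=\Re^n$; there two points of $A$ at $C$-distance greater than two already suffice, so this is routine. Your selection of the initial $W_0$ tacitly assumes that for every coordinate $A$ actually contains an element with non-negative and one with non-positive $i$-th entry; this can fail when $p$ lies on the boundary of the bounding box and the relevant extremum is not attained. The paper makes the same implicit restriction by starting from a closed $X$. Finally, your aside on $n=1$ is slightly garbled: the preceding theorem handles the planar case $n=2$, not dimension one.
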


\begin{proof}
Since both Carath\'eodory numbers are affine invariant quantities, we may assume that
$C$ is the unit ball of the $l_\infty$ norm.
By Corollary~\ref{cor:cubeconvexhull}, the two Carath\'eodory numbers of $C$ are equal.
Let $X$ be a closed set in $\Re^n$.
Then $\sconv{X}$ is either $\Re^n$ or the minimal volume axis-parallel box
that contains $X$.
In the first case it is easy to see that there are two points of $X$ that are not contained in any translate of $C$,
from which the assertion readily follows.

Assume that $\sconv{X}$ is the minimal volume axis-parallel box that contains $X$,
and consider a point $y \in \sconv{X}$.
Then, by a theorem of Lay \cite{L80}, there is a subset $X' \subseteq X$ with $\card X' \leq n$ such that
the minimal volume axis-parallel box containing $X'$ contains $y$.
Thus, we obtain $y \in \sconv{X'}$ and hence $\Car^s C \leq n$.

On the other hand, let $X$ be the set of those $n$ vertices
of $C$ which are connected by an edge of $C$ with a given vertex $v$ of $C$.
Then $\sconv{X} = C$.
Let $y$ be the vertex of $C$ opposite of $v$, and observe that, removing any point of $X$,
the $C$-ball convex hull of the remaining points is a facet of $C$ not containing $y$.
Thus, $\Car^s C \geq n$.
\end{proof}

Similarly to Corollary~\ref{cor:cubeconvexhull} and Proposition~\ref{prop:cubecaratheodory},
we can prove the following.

\begin{prop}\label{prop:simplex}
Let $C \subset \Re^n$ be a simplex.
\begin{itemize}
\item[(i)] For any closed set $X \subset \Re^n$, both $\bconv{X}$ and $\sconv{X}$ are either $\Re^n$,
or the smallest positive homothetic copy of $C$ that contains $C$.
\item[(ii)] $\Car^s C = \Car^b C = n+1$.
\end{itemize}
\end{prop}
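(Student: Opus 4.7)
Following the template of Corollary~\ref{cor:cubeconvexhull} and Proposition~\ref{prop:cubecaratheodory}, after an affine transformation I may assume $C = \{x \in \Ren : \langle x, u_i\rangle \leq h_i,\ i=0,\ldots,n\}$, where the $u_i$ are the outward facet normals and $h_i$ the support values; since $C$ is a simplex, there exist positive reals $\lambda_i$ with $\sum_{i=0}^n \lambda_i u_i = 0$. A translate $C+v$ contains $X$ precisely when $\langle v, u_i\rangle \geq M_i - h_i$ for every $i$, where $M_i := \sup_{x \in X}\langle x, u_i\rangle$. Linear programming duality, combined with $\sum_i \lambda_i u_i = 0$, yields $\inf_v \langle v, u_j\rangle = M_j - h_j$ whenever the system is feasible, so
\[
\bconv{X} = \{x \in \Ren : \langle x, u_i\rangle \leq M_i,\ i=0,\ldots,n\},
\]
which is precisely the smallest positive homothet of $C$ containing $X$; otherwise $\bconv{X} = \Ren$.

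For $\sconv{X}$, the same half-space description shows that every positive homothet $\alpha C + w$ of $C$ is itself $C$-spindle convex, since the spindle $[p,q]_C$ is cut out by $\langle \cdot, u_i\rangle \leq \max(\langle p, u_i\rangle, \langle q, u_i\rangle)$ and these values lie inside the bounding half-spaces of $\alpha C + w$. Thus $\sconv{X} \subseteq T$ whenever the smallest positive homothet $T$ of $C$ containing $X$ is defined. For the reverse containment I pick $n+1$ support points $x_i \in X$ with $\langle x_i, u_i\rangle = M_i$, one on each facet of $T$, and plan to establish the key claim $T \subseteq \sconv{\{x_0,\ldots,x_n\}}$ by iterating the spindle operation: each $[x_i, x_j]_C$ is a positive homothet of $C$ saturating the $i$-th and $j$-th facet constraints at $M_i$ and $M_j$, and successive iterations should recover the vertices of $T$ and then, via the calculation $[v_i, v_j]_C = C$ for distinct vertices of a simplex (the same phenomenon exploited in the proof of the preceding two-dimensional theorem), all of $T$. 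In the degenerate cases (for instance when some pair $p, q \in X$ has $[p,q]_C = \Ren$), one obtains $\sconv{X} = \Ren$ directly.

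Part~(ii) then follows in the manner of Proposition~\ref{prop:cubecaratheodory}. For the upper bound $\Car^s C, \Car^b C \leq n+1$, given closed $X$ and $p \in \sconv{X} = T$, the $n+1$ support points of $T$ form $W \subseteq X$ with $\card W \leq n+1$ and $\sconv{W} = T$, so $p \in \sconv{W}$; the case $\sconv{X} = \Ren$ is handled by Helly's theorem applied to translates of $C$ (whose Helly number is $n+1$). For the lower bound, I adapt the construction from the preceding two-dimensional theorem: place $n+1$ points, one strictly interior to each facet of a fixed positive homothet $T = \lambda C + w$ of $C$, so that $T$ is the smallest positive homothet of $C$ containing all $n+1$ points while every $n$-subset lies in a strictly smaller positive homothet; the centroid of these $n+1$ points then lies in $T$ but in no $\sconv{W}$ of size $\leq n$. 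The principal obstacle is the key claim $T \subseteq \sconv{\{x_0,\ldots,x_n\}}$ used in part~(i): showing that iterating spindles of the $n+1$ facet-support points exhausts all of $T$ rather than merely a proper subset requires careful tracking of support values through successive iterations, and ultimately rests on the combinatorial fact that any two distinct vertices of a simplex span the whole simplex under the spindle operation.
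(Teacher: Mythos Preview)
The paper gives no proof of this proposition; it merely states that the argument is analogous to that of Corollary~\ref{cor:cubeconvexhull} and Proposition~\ref{prop:cubecaratheodory}. Your half-space description of $C$, the identity $[p,q]_C=\{x:\langle x,u_i\rangle\le\max(\langle p,u_i\rangle,\langle q,u_i\rangle)\}$, and the reduction of both hulls to the smallest positive homothet of $C$ containing $X$ are exactly in this spirit and are correct.

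Two points deserve tightening. First, the step you flag as the ``principal obstacle'' is genuinely the heart of the matter, and your outline is right but incomplete: to obtain the vertices of $T$ from the facet-support points $x_0,\dots,x_n$, argue inductively that if $p\in T$ satisfies $\langle p,u_k\rangle=M_k$ for $k\in I$ with $|I|<n$, then for any $j\notin I$ the simplex $[p,x_j]_C$ has its facets indexed by $I\cup\{j\}$ at level $M_k$, and since $|I\cup\{j\}|\le n$ these facets of the simplex $[p,x_j]_C$ meet; any point of that face lies on the corresponding face of $T$. After $n$ steps you reach a vertex of $T$, and then your observation $[v_i,v_j]_C=T$ (not $C$; note the homothety factor) finishes the job.

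Second, for the lower bound in~(ii), your claim that ``the centroid of these $n+1$ points lies in no $\sconv{W}$ of size $\le n$'' is not automatic: if the $x_j$ for $j\neq i$ are placed so that one of them has $\langle x_j,u_i\rangle$ very close to $M_i$ while the others are small in that direction, the centroid of all $n+1$ points can fall inside $\sconv{\{x_j:j\neq i\}}$. A clean fix is to take $x_i$ to be the centroid of facet $i$ and to use the centroid of $T$ (rather than of the points) as the witness; a direct computation then shows $\langle\text{centroid}(T),u_i\rangle>\max_{j\neq i}\langle x_j,u_i\rangle$ for every $i$.
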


We apply the following remark several times in this section.

\begin{rem}\label{rem:bconvcheck}
For any set $H\subset \Re^n$, a point $p\in\Re^n$, and a convex body $C\subset\Re^n$,
we have that $p\in\Bm(H)$ if, and only if, $C+p\supseteq H$. Similarly,
$p\in\Bp (H)$ if, and only if, $-C+p\supseteq H$.
\end{rem}

\begin{prop}\label{prop:BBcaratheodory}
Let $C\subset\Ren$ be an $n$-polytope with $k$ facets. Then the ball Carath\'eo\-dory number of
$C$ is at most $kn$.
\end{prop}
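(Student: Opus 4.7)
The plan is to translate $p \in \bconv{A}$ into a finite family of linear inequalities that must be valid on the polyhedron $\Bm(A)$, invoke LP duality and the basic-feasible-solution theorem (equivalently Carath\'eodory's theorem for conic combinations in $\Ren$) to obtain a sparse certificate for each such inequality, and then select a witness point of $A$ for each index used by the certificates. Write $C = \{x \in \Ren : \langle a_i, x\rangle \le b_i,\ 1 \le i \le k\}$ with outward facet normals $a_i$. For a set $A \subseteq \Ren$ with $\bconv{A} \ne \Ren$, Remark~\ref{rem:bconvcheck} gives
\[
\Bm(A) = \left\{ v \in \Ren : \langle a_j, v\rangle \ge \alpha_j,\ 1 \le j \le k \right\}, \qquad \alpha_j := \sup_{a \in A}\langle a_j, a\rangle - b_j,
\]
and $-C + p = \{ v : \langle a_i, v\rangle \ge \beta_i,\ 1 \le i \le k \}$ with $\beta_i := \langle a_i, p\rangle - b_i$. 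The hypothesis $p \in \bconv{A} = \Bp\Bm(A)$ is equivalent to $\Bm(A) \subseteq -C+p$, meaning that for each $i$ the inequality $\langle a_i, v\rangle \ge \beta_i$ is valid on $\Bm(A)$.

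For each $i \in \{1, \ldots, k\}$, I would apply LP duality to $\min\{\langle a_i, v\rangle : v \in \Bm(A)\}$; its dual, $\max\{\sum_j \mu_j \alpha_j : \sum_j \mu_j a_j = a_i,\ \mu_j \ge 0\}$, is feasible (take $\mu_i = 1$, $\mu_j = 0$ for $j \ne i$) and has optimum at least $\beta_i$ by our hypothesis. The basic-feasible-solution theorem then provides an optimal $\mu^{(i)} \ge 0$ whose support $J_i \subseteq \{1, \ldots, k\}$ satisfies $\card J_i \le n$. The degenerate case $\bconv{A} = \Ren$ (equivalently $\Bm(A) = \emptyset$) is handled separately: Helly's theorem applied to the family $\{ -C + a : a \in A\}$ of compact convex sets produces a subset of $A$ of cardinality at most $n+1 \le kn$ with empty $\Bm$, whose $C$-ball convex hull is therefore all of $\Ren$ and thus contains $p$.

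Assuming $A$ is closed so that each $\alpha_j$ is attained, I would choose for every $j \in \bigcup_i J_i$ a point $a^{(j)} \in A$ with $\langle a_j, a^{(j)}\rangle - b_j = \alpha_j$, and set $A' := \{ a^{(j)} : j \in J_1 \cup \cdots \cup J_k \}$. Then $A' \subseteq A$ and $\card A' \le \sum_{i=1}^{k} \card J_i \le kn$. For each $i$ and each $j \in J_i$ we have $a^{(j)} \in A'$, so $\sup_{a \in A'} \langle a_j, a\rangle - b_j = \alpha_j$, which means $\mu^{(i)}$ is still feasible for the dual LP with $A$ replaced by $A'$ and achieves dual value $\sum_j \mu^{(i)}_j \alpha_j \ge \beta_i$. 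Weak LP duality then gives $\langle a_i, v\rangle \ge \beta_i$ for every $v \in \Bm(A')$, and intersecting over $i$ yields $\Bm(A') \subseteq -C+p$, so $p \in \bconv{A'}$ as required.

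The main obstacle will be the case where $A$ is not closed and some $\alpha_j$ is not attained inside $A$. The natural remedy is to carry out the construction in $\cl A$ first (using $\bconv{A} = \bconv{\cl A}$), obtaining witnesses in $\cl A$, and then perturb each into $A$; the inequalities characterizing $p \in \bconv{A'}$ depend continuously on the $a^{(j)}$, so small perturbations preserve them whenever the original inequalities are strict, and the remaining tight case requires an additional limiting argument.
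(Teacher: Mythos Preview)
Your LP-duality argument is correct for closed $A$ and reaches the bound $kn$, but the paper compresses the entire body of your proof into a single appeal to Helly's theorem. After observing (as you do, via Remark~\ref{rem:bconvcheck}) that $p\in\bconv{X}$ amounts to
\[
(\Ren\setminus(-C+p))\cap\bigcap_{v\in X}(-C+v)=\emptyset,
\]
the paper simply notes that $\Ren\setminus(-C+p)$ is a union of $k$ convex sets (one open half-space per facet of $C$) and invokes Helly on each of the resulting $k$ empty intersections to extract at most $n$ witnesses from $X$, for a total of $kn$. Your LP-duality and basic-feasible-solution machinery is the algebraic unpacking of precisely this Helly step: the dual multipliers $\mu^{(i)}$ with support $\le n$ are exactly what Helly hands you geometrically. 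The two routes are equivalent, but the paper's phrasing is a few lines rather than a page. (Your separate treatment of the case $\bconv{A}=\Ren$ via Helly on $\{-C+a:a\in A\}$ is in the same spirit and matches the paper's viewpoint.)

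On non-closed $A$: you are right to flag the issue, and the paper's proof is equally silent here. However, your proposed remedy by perturbation cannot succeed. Take $n=1$, $C=[-1,1]$, $A=(0,1)$, $p=1$. Then $\Bm(A)=[0,1]$, so $\bconv{A}=[0,1]\ni p$; yet for every finite $A'\subset(0,1)$ one has $\bconv{A'}=[\min A',\max A']\subset(0,1)$, hence $p\notin\bconv{A'}$. No perturbation or limiting argument rescues this: when the relevant inequality is tight and the supremum defining $\alpha_j$ is not attained in $A$, the desired conclusion simply fails. The result must therefore be read for closed $A$ (equivalently, one replaces $A$ by $\cl A$ throughout, which leaves $\bconv{A}$ unchanged but alters the pool from which witnesses may be drawn); both your argument and the paper's then go through.
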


\begin{proof}
Let $X\subset\Re^n$ and $p\in\bconv{X}$ be given. We need to find a subset $Y$ of $X$ of cardinality $kn$
such that $p\in\bconv{Y}$. By Remark~\ref{rem:bconvcheck}, we have that $C\supseteq \Bm (X)$, that is,
$(\Re^n\setminus C)\cap \bigcap_{v\in X}(-C+v)=\emptyset$. Since $\Re^n\setminus C$ is the union of $k$ convex sets,
the statement follows from Helly's theorem.
\end{proof}

The following construction, similar to Example 4 in \cite{NT10}, shows that in $\Re^3$ there are convex bodies
with arbitrarily large ball Carath\'eodory numbers.

\begin{thm}\label{thm:BB}
Let $k\in\Ze^+$ be given. Then there is an \osymm convex body $C \subset \Re^3$ and a set $X\subset\Re^3$
such that $\origo\in\bconv{X}$, but for any $Y\subset X$, $\card Y< k$ we have that $o\notin \bconv{Y}$.
\end{thm}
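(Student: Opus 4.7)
By Remark~\ref{rem:bconvcheck}, and since $C$ will be chosen origin-symmetric, the condition $\origo \in \bconv{X}$ is equivalent to $\bigcap_{v \in X}(v + C) \subseteq C$, and $\origo \notin \bconv{Y}$ is equivalent to the existence of a vector $t \in \Re^3 \setminus C$ with $Y \subseteq C + t$. The plan is therefore to arrange $C$ and $X$ so that the intersection taken over all of $X$ is trapped inside $C$, but the intersection taken over every $(k-1)$-element subset escapes $C$.

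Following the pattern of Example~4 in \cite{NT10}, I would take $C$ to be an origin-symmetric convex polytope in $\Re^3$ with a $k$-fold rotational symmetry around the $x_3$-axis, for instance a double pyramid over an origin-symmetric regular polygon inscribed in the unit circle of $\{x_3=0\}$ (possibly refined by replacing each apex vertex with a small facet or cap). Label $k$ equally spaced unit vectors $w_1, \ldots, w_k$ in this equatorial plane, and take $X = \{v_1, \ldots, v_k\}$ with each $v_i$ placed off the equatorial plane (so that the problem is genuinely three-dimensional) and positioned so that the constraint $v_i - t \in C$ controls the horizontal component of $t$ in the direction associated with $w_i$.

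The verification splits in two. For $\origo \in \bconv{X}$, I would describe the set of admissible translation vectors $t$ with $X \subseteq C + t$: the $k$ constraints $v_i - t \in C$, coupled with the $k$-fold symmetry of the configuration, should force $t$ to lie in a small subset of $C$ (ideally the single point dictated by symmetry), giving the desired containment. For $\origo \notin \bconv{Y}$ with $Y = X \setminus \{v_j\}$, I would construct an explicit escape vector $t_j$ in the horizontal direction associated with $w_j$, chosen large enough that $t_j \notin C$ yet small enough that each remaining $v_i$ (with $i \neq j$) lies in $C + t_j$. The removal of $v_j$ should free exactly the horizontal direction $w_j$ by the margin needed for such a $t_j$ to exist.

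The main obstacle is the simultaneous calibration of all parameters (the pyramid height, the positions of the $v_i$, the escape magnitudes): the setup must be tight enough that no translate covers the full set $X$ while avoiding $\origo$, yet loose enough that each single omission opens up an escape translate. The three-dimensionality is essential, since by the results of Section~\ref{sec:caratheodory} the ball Carath\'eodory number of any plane convex body is at most three; so the $v_i$ must genuinely exploit the $x_3$-coordinate (for instance by alternating above and below the equator, or clustering near the apices). Working out the combinatorial geometry of which translates cover which $(k-1)$-subsets, and refining the facet structure of $C$ near each $w_i$ to make these escape directions pairwise incompatible, will be the core computational task.
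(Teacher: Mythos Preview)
Your reduction via Remark~\ref{rem:bconvcheck} is correct, but what follows is not a proof: it is a plan whose decisive step you explicitly defer (``the core computational task''). You propose a bipyramid with $k$-fold rotational symmetry and symmetrically placed points $v_i$, hoping that removing one $v_j$ frees exactly one escape direction. Nothing in the proposal verifies that such a calibration is achievable, and there is reason to doubt it for so rigid a body: the mechanism you need --- that each $(k-1)$-subset of $X$ admits a translate of $C$ escaping $C$ while the full $X$ does not --- requires $\bd C$ to distinguish $k$ different ``barely outside'' witnesses. Rotational symmetry alone does not manufacture these; you would need a local feature of $\bd C$ tied to each index $j$, and your sketch does not supply one.

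The paper's construction is quite different and makes this mechanism explicit. It builds $C$ from parallel planar sections of a paraboloid and --- this is the key device --- deletes from one of the boundary arcs $k$ small open subarcs around designated points $u_1,\ldots,u_k$, so that each $u_i$ lies just outside $C$. The set $X=\{t_1,\ldots,t_k\}$ consists of translation vectors chosen so that $u_i$ sits on $\bd(C+t_j)$ whenever $j\neq i$. Hence $u_i\in\BC(X\setminus\{t_i\})\setminus C$, which gives $o\notin\bconv{X\setminus\{t_i\}}$; meanwhile the full intersection $\BC(X)$ is squeezed into a planar region contained in $C$, giving $o\in\bconv{X}$. The tailor-made ``notches'' at the $u_i$ are exactly what your symmetric-polytope approach lacks: without an analogous index-by-index feature on $\bd C$, there is no evident reason the full-$X$ case should behave differently from every $(k-1)$-subset case.
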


\begin{proof}
We may assume that $k$ is even. Consider the paraboloid $P$ in $\Re^3$
defined by $x_3 = x_1^2 + x_2^2$.
We choose $k$ points on the parabola $P_0=P\cap\{x_1=0\}$, and
number these points according to the order in which they lie on the parabola:
$U=\{u_1,u_2,\ldots,u_k\}\subset P_0$.
Plane sections of $P$ parallel to the $x_2x_3$-plane
are translates of this parabola.
Let $P_i=P\cap\{x_1=i\}$ for $i\in I$, where
$I=\left\{-\frac{k}{2},-\frac{k}{2}+1,\ldots,-2,-1,1,2,\ldots,\frac{k}{2}\right\}$.
Now, for each $i\in I$, there is a unique translation vector $t_i$ such that
$P_0=P_i+t_i$. Let $U_i=(U\setminus\{u_{i}\})-t_i\subset P_i$.
Let $h>0$ be larger than the largest $x_3$
coordinate of the points in any $U_i$.
Finally, consider the (bounded) arc of $P_0$ that lies in the half-space
$\{x_3\leq h\}$. Delete from this arc very small open arcs around
each point of $U$, and call the remaining part of $P_0$ (the union of $k+1$ closed bounded arcs) $U_0$.
We define $C$ as the following \osymm convex body:
\[C=\conv\left[\left(\bigcup_{i=0}^k U_i-(0,0,h)\right)\bigcup-\left(\bigcup_{i=0}^k U_i-(0,0,h)\right)\right].\]

Let $X=\{t_i: i\in I\}$. Now, $\BC(X)\subseteq\BC\left\{t_{-\frac{k}{2}},t_{\frac{k}{2}}\right\}$, and the latter
is contained in the $x_1=0$ plane.
Moreover, $\BC(X)$ is contained in the planar region $\conv(P_0-(0,0,h))\cap -\conv(P_0-(0,0,h))$.
If the open arcs in the definition of $U_0$ are sufficiently small then a little more is true:
$\BC(X)$ is contained in the planar region $\conv(U_0-(0,0,h))\cap -\conv(U_0-(0,0,h))$.
Thus, $\BC(X)\subset C$. It follows,
by Remark~\ref{rem:bconvcheck}, that $o\in\BC\BC(X)=\bconv{X}$.
On the other hand, for any $i\in I$, we have that
$u_i\in \BC(X\setminus\{t_i\})$, and hence, $C\not\supset\BC(X\setminus\{t_i\})$.
It follows (again by Remark~\ref{rem:bconvcheck}) that $o\not\in\bconv{X\setminus\{t_i\}}$.
\end{proof}

This example may be modified in several ways.
First, we may generalize it for $\Re^n$ with $n>3$, by replacing $C$ with $C\times[-1,1]^{n-3}$ and leaving $X$ unchanged.
Second, by ``smoothening'' $C$, we may obtain a smooth and strictly convex
\osymm body $C$ in $\Ren$ with an arbitrarily large ball Carath\'eodory number.
Third, we may replace $U_0$ by a sufficiently dense finite subset of $U_0$, and thus obtain a polytope as $C$.
Finally, the following modification of the example yields an \osymm convex body in $\Re^3$ whose
ball Carath\'eodory number is infinity: In the construction, replace the finitely many $P_i$s
by planar sections of $P$ of the form $P_i=P\cap\{x_1=a_i\}$ where
$a_1,a_2,\ldots$ is a sequence of real numbers in $(-1,1)$, which is symmetric about zero and which does not contain any of its accumulation points.
Then, construct $U$ similarly: let $u_1,u_2,\ldots$
be a bounded sequence of points on $P_0$, which does not contain any of its accumulation points.

\begin{prob}
For $n \geq 3$, find the minima of the ball/spindle Carath\'eodory numbers of the $n$-dimensional convex bodies,
and if it exists, find the maximum of their spindle Carath\'eodory numbers.
\end{prob}

\begin{prob}
For $n \geq 3$, prove or disprove the existence of a convex body $C \subset \Re^n$ such that its two Carath\'eodory numbers are different.
\end{prob}

\begin{prob}
It is known (cf. \cite{BLNP07}) that both Carath\'eodory numbers of the $n$-dimensional Euclidean ball are $n+1$.
Prove or disprove that this holds also in a small neighborhood of the Euclidean ball. If the answer is negative, is the set of ball (resp., spindle)
Caratheodory numbers bounded from above in a neighborhood of the Euclidean ball?
\end{prob}

\section{Finitely generated $C$-ball convex sets}\label{sec:ballspindle}

Clearly, every $C$-spindle is $C$-ball convex, but the converse is
not true in general. However, there are convex bodies $C$ for which
every $C$-ball convex set is a $C$-spindle, such as the simplices and rectangular boxes of $\Ren$, see Corollary~\ref{cor:cubeconvexhull} and
Proposition~\ref{prop:simplex}.
In this section, we examine a more general problem: We investigate those convex bodies $C$ that have the property that every $C$-ball convex set is the $C$-ball convex hull of finitely many points. 

\begin{defn}\label{defn:k-generated}
If every $C$-ball convex set is the $C$-ball convex hull of at most $k$ points, for some fixed $k\geq 2$, then we say that 
\emph{every $C$-ball convex set is $k$-generated}. Similarly, if every $C$-ball convex set is the $C$-ball 
convex hull of finitely many points, then we say that \emph{every $C$-ball convex set is finitely generated}.
\end{defn}

Note that if every $C$-ball convex set is $k$-generated for some convex body $C\subset\Ren$,
where $k\geq 2$, and $P$ is a $C$-ball convex set, $\card (P)>1$, then $P$ can
be obtained as the $C$-ball convex hull of exactly $k$ points of $\Ren$.

\begin{thm}\label{thm:kn}
If $C$ is a convex body in $\Ren$ for which every $C$-ball convex set is $k$-generated, then
$C$ is an $n$-polytope with at most $kn$ facets.
\end{thm}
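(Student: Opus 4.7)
The starting point is the equivalence $\bconv{X}=K$ if and only if $\Bm(X)=\Bm(K)$, which follows from the identities $\bconv{X}=\Bp\Bm(X)$ and $K=\Bp\Bm(K)$ for any $C$-ball convex $K$ (together with the injectivity of $\Bm$ on $C$-ball convex sets, itself a consequence of these identities). This reformulates the hypothesis as: every $(-C)$-ball convex set of the form $\Bm(K)=\bigcap_{p\in K}(-C+p)$ is expressible as the intersection of at most $k$ translates of $-C$. The plan is to feed specific sets $K$ into this condition and read off numerical constraints.

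For the facet bound, assume $C$ is a polytope with $m$ facets $F_1,\ldots,F_m$, outer normals $u_1,\ldots,u_m$, and support values $h_1,\ldots,h_m$. Apply the hypothesis to $K=(1-\epsilon)C$ for a small $\epsilon\in(0,1)$; this set is $C$-ball convex by Corollary~\ref{cor:addition}. A short Minkowski-difference computation gives $\Bm(K)=-\epsilon C$, so there exist $x_1,\ldots,x_k$ with $\bigcap_{i=1}^k (-C+x_i)=-\epsilon C$. Because $-C$ is itself a polytope, the intersection on the left is cut out by, for each $j$, the single inequality in direction $-u_j$ with right-hand side $h_j-\max_i\langle u_j,x_i\rangle$; matching this against the defining inequality of $-\epsilon C$ in the same direction yields the equation $\max_i\langle u_j,x_i\rangle=(1-\epsilon)h_j$ for every $j$. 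Geometrically, for each facet $F_j$ of $C$ some generator $x_i$ must lie on the corresponding facet $(1-\epsilon)F_j$ of $(1-\epsilon)C$. Since each $x_i$ lies on at most $n$ facets of $(1-\epsilon)C$ (the maximum being attained at a simple vertex), the total number of facet-incidences is at most $kn$, and since each of the $m$ facets must be hit at least once, $m\leq kn$.

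For the assertion that $C$ is a polytope, I would argue by contradiction. If $C$ is not a polytope, then $\bd C$ either contains a relatively open strictly convex piece or carries infinitely many extreme facet directions. In both situations, a construction modelled on Theorem~\ref{thm:BB} should produce, for every integer $N$, a finite set $X_N\subset\Re^n$ whose $C$-ball convex hull has a boundary supported by $N$ essentially distinct translates of $C$, none of which is removable. Taking $N>k$ then yields a $C$-ball convex set whose $\Bm$ cannot be written as an intersection of only $k$ translates of $-C$, contradicting the hypothesis.

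The hard part will be the polytope step: making ``essentially distinct support-translates'' precise in each of the two regimes (smooth versus infinitely many facets) and verifying that the resulting configuration genuinely cannot be compressed into $k$ translates. In the facet-counting step, the bound ``each $x_i$ lies on at most $n$ facets'' is tight exactly when $(1-\epsilon)C$ is simple; handling non-simple $C$ will require either perturbing the generators into a generic position so that no $x_i$ lies on more than $n$ facets, or the complementary observation that the hypothesis itself already forces every vertex of $C$ to be simple.
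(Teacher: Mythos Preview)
Your approach to the facet bound—feeding a shrunk copy of $C$ into the hypothesis and counting facet--generator incidences—is the same idea as the paper's, and your support-function computation is correct. But your treatment of the non-simple case does not work. Perturbing the generators $x_i$ destroys the equalities $\max_i\langle u_j,x_i\rangle=(1-\epsilon)h_j$: once you move them, $\bconv{\{x_i'\}}$ is no longer $(1-\epsilon)C$, and you lose the very incidences you are counting. The alternative—that the hypothesis forces $C$ to be simple—is simply false: any polytope $C$ with $m$ facets, simple or not, satisfies the hypothesis for $k=m$ (for each facet direction $u_j$ of $C$, place one generator at a point of $K$ maximizing $\langle u_j,\cdot\rangle$; by your own formula this recovers $K$). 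The paper closes this gap by perturbing the \emph{target polytope} rather than the generators: write $\tfrac{1}{2}C=\bigcap_i(C+v_i)$ and perturb the $v_i$ slightly so that $P=\bigcap_i(C+w_i)$ is simple, with each facet of $P$ in the relative interior of a facet of a distinct $C+w_i$. Since $P$ is an intersection of translates of $C$, it is automatically $C$-ball convex, so the hypothesis applies to a simple polytope; the $k$ generators must meet every facet of $P$ (Lemma~\ref{lem:Cballconvex}), and simplicity gives the bound. You could equally well perturb $(1-\epsilon)C$ in your setup, but the perturbation has to happen at the level of the target set, before invoking the hypothesis.

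For the polytope step you have only a plan, and the gesture towards Theorem~\ref{thm:BB} is not quite apt: there one constructs a specific $C$; here $C$ is given and you must build a $C$-ball convex set that is not finitely generated. The paper (Lemma~\ref{lem:polytope}) does this directly. If $C$ is not a polytope, choose infinitely many smooth boundary points $p_i$ of $rC$ (for some $0<r<1$) with pairwise distinct outer normals, so that neither the $p_i$ nor the normals accumulate at any of their own members. For each $i$ let $C_i$ be the translate of $r'C$ (with $r<r'<1$) tangent to $rC$ at $p_i$; each $C_i$ is $C$-ball convex by Corollary~\ref{cor:addition}. After pushing each $C_i$ slightly outward in its normal direction, the intersection $P'=\bigcap_i C_i'$ is $C$-ball convex and has, at each perturbed $p_i'$, a unique supporting hyperplane, with at most $n$ of these hyperplanes through any single point. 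Any finite generating set would have to meet infinitely many of these supporting hyperplanes, which is impossible. Your proposed dichotomy between a strictly convex piece and infinitely many extreme facet directions is unnecessary: the existence of infinitely many smooth boundary points with distinct normals is automatic for any non-polytope and handles both cases at once.
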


This theorem is a consequence of the next three lemmas.
The first of these readily follows  by the minimality property of $C$-ball convex hull as the intersection of translates of $C$.

\begin{lem}\label{lem:Cballconvex}
Let $C$ be a convex body in $\Ren$. Let $K\subseteq \Ren$ be a bounded set,
and assume that $K=\bconv{S}$ for some closed set $S\subseteq K$.
Then, a translate $C+v$ contains $K$ and fulfils $\bd(C+v) \cap K\neq \emptyset$ if and only if $\bd(C+v) \cap S\neq \emptyset$.
\end{lem}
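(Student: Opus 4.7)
My plan is to reformulate the lemma in terms of the set $\Bm(T)=\{v\in \Ren : C+v\supseteq T\}$ attached to a compact subset $T\subseteq \Ren$, and to reduce everything to the minimality property of the $C$-ball convex hull. The first step is a compactness observation: for any compact $T\subseteq \Ren$, one has $v\in \inter \Bm(T)$ if and only if $T\subseteq \inter(C+v)$. The forward direction is immediate from Remark~\ref{rem:bconvcheck}, and for the reverse I would use $\dist(T,\bd(C+v))>0$ to produce an $\epsilon>0$ with $C+v+u\supseteq T$ for all $|u|<\epsilon$.

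Applied to $T=K$ and to $T=S$ separately, this observation yields, for every $v\in \Bm(K)$, that $\bd(C+v)\cap K\neq \emptyset$ iff $v\in \bd\Bm(K)$ (given $v\in \Bm(K)$); similarly, for every $v\in \Bm(S)$, $\bd(C+v)\cap S\neq \emptyset$ iff $v\in \bd\Bm(S)$. Both $K$ and $S$ are compact: $K$ is closed as a $C$-ball convex set and bounded by hypothesis, and $S\subseteq K$ is closed by assumption. The key identification is now $\Bm(S)=\Bm(K)$. The inclusion $\Bm(K)\subseteq \Bm(S)$ is trivial from $S\subseteq K$, while the reverse inclusion is precisely the minimality property of $K=\bconv{S}$ as the intersection of all translates of $C$ that contain $S$: any translate of $C$ that contains $S$ must contain that intersection, which is $K$. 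Consequently $\bd\Bm(S)=\bd\Bm(K)$, so the two conditions $\bd(C+v)\cap K\neq \emptyset$ and $\bd(C+v)\cap S\neq \emptyset$ coincide on translates $C+v$ that contain $K$, which is exactly the biconditional the lemma asserts.

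The main (and only) obstacle is the compactness step turning $T\subseteq \inter(C+v)$ into a full neighborhood of $v$ in $\Bm(T)$; this is standard. An alternative, more pedestrian route — should one prefer to avoid the reformulation — is to assume for contradiction that $\bd(C+v)\cap K\neq \emptyset$ and $\bd(C+v)\cap S=\emptyset$, deduce $S\subseteq \inter(C+v)$, use compactness to find $\epsilon>0$ with $K\subseteq C+v-tn$ for $0<t<\epsilon$ (with $n$ a unit outer normal to $C+v$ at a point $x\in \bd(C+v)\cap K$, and using $\Bm(S)=\Bm(K)$ to pass from $S$ to $K$), and then note $x\notin C+v-tn$ since $x$ maximizes $\langle\cdot,n\rangle$ on $C+v$. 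Either route finishes the lemma directly.
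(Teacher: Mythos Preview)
Your proof is correct and matches the paper's approach: the paper gives no detailed argument, stating only that the lemma ``readily follows by the minimality property of $C$-ball convex hull as the intersection of translates of $C$,'' which is precisely your identity $\Bm(S)=\Bm(K)$. Your compactness observation (that $v\in\inter\Bm(T)$ iff $T\subseteq\inter(C+v)$ for compact $T$) simply fills in what the paper leaves implicit.
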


\begin{lem}\label{lem:kn}
If $C\subset\Ren$ is an $n$-polytope for which every $C$-ball convex set is $k$-generated, then $C$ has at most $kn$ facets.
\end{lem}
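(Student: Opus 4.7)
The plan is to construct a simple $C$-ball convex polytope $K$ with exactly $f$ facets (where $f$ is the number of facets of $C$), and then to combine the hypothesis that $K$ is $k$-generated with Lemma~\ref{lem:Cballconvex} to show that each facet of $K$ contains a point of the generating set. Since $K$ is simple, each point lies on at most $n$ of its facets, so a counting argument will yield $f\leq kn$.

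To construct $K$, let $u_1,\ldots,u_f$ be the outer unit normals of the facets of $C$ with support values $h_j=\max_{x\in C}\langle u_j,x\rangle$, and pick generic small positive reals $\epsilon_1,\ldots,\epsilon_f$. I would set
\[ K=\bigcap_{i=1}^{f}(C-\epsilon_i u_i), \]
which is $C$-ball convex by construction. For $\epsilon_i$ sufficiently small and of comparable magnitude the $u_j$-facet of $K$ is contributed by $C-\epsilon_j u_j$ and sits at level $h_j-\epsilon_j$ (using $\langle u_j,u_j\rangle=1>\langle u_j,u_i\rangle$ for $i\neq j$), so $K$ realizes all $f$ facet directions. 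Moreover, if $C$ has a non-simple vertex at which $m>n$ facets meet, the corresponding configuration in $K$ would require $m$ affine equations $\langle u_{i_\ell},x\rangle=h_{i_\ell}-\epsilon_{i_\ell}$ ($\ell=1,\ldots,m$) to be simultaneously consistent in $\Ren$, which generically fails; hence non-simple vertices of $C$ split into simple vertices of $K$, and $K$ becomes a simple $n$-polytope with exactly $f$ facets.

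By hypothesis $K=\bconv{S}$ for some $S\subset\Ren$ with $\card S\leq k$; the key step is to show $F_i\cap S\neq\emptyset$ for every facet $F_i$ of $K$. By Remark~\ref{rem:bconvcheck}, $\Bm(K)=\{v\in\Ren:C+v\supseteq K\}$ is a polytope cut out by the inequalities $\langle u_j,v\rangle\geq h_K(u_j)-h_j$, where $h_K(u_j)=\max_{x\in K}\langle u_j,x\rangle$. The $i$-th constraint being tight corresponds precisely to the $u_i$-facet of $C+v$ coinciding with $F_i$. Picking $w_i$ in the relative interior of the $i$-th facet of $\Bm(K)$, only the $i$-th constraint is tight at $w_i$, so the $u_i$-facet of $C+w_i$ aligns with $F_i$ while every other facet of $C+w_i$ lies strictly beyond the corresponding facet of $K$. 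Since $C+w_i\supseteq K$ and $F_i\subseteq\bd(C+w_i)\cap K$, Lemma~\ref{lem:Cballconvex} produces a point $p_i\in S\cap\bd(C+w_i)$; and for $j\neq i$ the inequality $\langle u_j,p_i\rangle\leq h_K(u_j)<h_j+\langle u_j,w_i\rangle$ forces $p_i$ off every facet of $C+w_i$ except the $u_i$-one, so $p_i\in F_i\cap S$.

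Since $K$ is simple, each point of $K$ lies on at most $n$ facets of $K$, and summing the incidences over the $f$ facets gives $f\leq\card S\cdot n\leq kn$. The main obstacle I anticipate is making the genericity step fully precise: verifying that the $\epsilon_i$ can indeed be chosen so that $K$ is simple with all $f$ facet directions present, and that all $f$ constraints defining $\Bm(K)$ are irredundant (so each of its facets has nonempty relative interior). This is essentially a standard perturbative claim about resolving non-simple vertices, but it requires careful quantitative control tied to the inner products $\langle u_i,u_j\rangle$ arising from the geometry of $C$.
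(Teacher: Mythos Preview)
Your proposal is correct and follows essentially the same approach as the paper: both construct a simple $C$-ball convex polytope realizing all $f$ facet directions of $C$ (the paper starts from $\frac{1}{2}C=\bigcap_i(C+v_i)$ and perturbs the $v_i$, you start from $\bigcap_i(C-\epsilon_i u_i)$ with generic small $\epsilon_i$), then invoke Lemma~\ref{lem:Cballconvex} to place a generator on each facet and finish with the counting $f\leq kn$ via simplicity. Your treatment of why the point of $S$ on $\bd(C+w_i)$ must lie on the $u_i$-facet (via the facets of $\Bm(K)$) is in fact more explicit than the paper's, which asserts this step in a single sentence.
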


\begin{lem}\label{lem:polytope}
If every $C$-ball convex set is finitely generated for a convex body $C\subset \Ren$, then $C$ is an $n$-polytope.
\end{lem}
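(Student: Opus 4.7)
The plan is to argue by contrapositive: assuming $C$ is not a polytope, I will exhibit a $C$-ball convex set that is not finitely generated. The candidate is $K = rC$ for some fixed $r \in (0,1)$, which is $C$-ball convex by Corollary~\ref{cor:addition}. Suppose, toward a contradiction, that $K = \bconv{S}$ for some finite $S \subseteq K$.

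By Lemma~\ref{lem:Cballconvex}, every translate $C + v$ with $K \subseteq C + v$ and $\bd(C+v) \cap K \neq \emptyset$ also satisfies $\bd(C+v) \cap S \neq \emptyset$. These ``tangent translates'' are exactly parameterized by $v \in \bd \Bm(K)$, where $\Bm(K) = \{v : K \subseteq C+v\}$ is a convex body, so the lemma yields the inclusion
\[
\bd \Bm(K) \subseteq \bigcup_{s \in S}(s - \bd C).
\]
Since $\bd \Bm(K)$ is compact and the right-hand side is a finite union of closed sets, one of the subsets $(s - \bd C) \cap \bd \Bm(K)$ must contain a non-empty relatively open subset $U$ of $\bd \Bm(K)$. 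In the symmetric case $C = -C$, one computes $\Bm(rC) = (1-r)C$, so $\bd \Bm(K) = (1-r)\bd C$, and the containment $U \subseteq s - \bd C = s + \bd C$ translates into the statement that the affine map $f(x) = (1-r)x - s$ sends an open subset $U'$ of $\bd C$ into $\bd C$.

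Since $f$ is a positive homothety followed by a translation, it preserves outward-normal directions of $\bd C$; hence the outward normal of $\bd C$ at $x$ equals that at $f(x)$ for every $x \in U'$. On the other hand, because $C$ is not a polytope, a compactness argument (if $C$ were locally polyhedral at every boundary point, then $\bd C$ would decompose into finitely many planar patches, making $C$ a polytope) produces a point $p_0 \in \bd C$ at which $C$ is strictly convex in at least one tangent direction. The Gauss map of $C$ is then locally injective along that direction at $p_0$, so the normal-preservation equation forces $f(x) = x$, yielding $x = -s/r$---a unique point, contradicting the openness of $U'$. The non-symmetric case follows the same outline with $\bd \Bm(rC)$ in place of $(1-r)\bd C$, its outward-normal structure being rigidly controlled by $\bd C$.

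The main obstacle I anticipate is ensuring the rigidity step applies uniformly across every non-polytopal $C$. If $\bd C$ has no strictly convex smooth points but still fails to be a polytope (for example, if infinitely many flat facets accumulate), then the Gauss map is nowhere injective, and a supplementary pigeonhole argument on the infinite set of outward-normal directions realized by $\bd C$ must replace the direct Gauss-map argument above; reconciling these two regimes in a single clean proof is the delicate part.
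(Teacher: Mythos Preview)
Your strategy --- show directly that $K=rC$ is not finitely generated when $C$ is not a polytope --- is different from the paper's, and the gap you flag is real and not merely cosmetic. The reduction to the covering $\bd\Bm(K)\subseteq\bigcup_{s\in S}(s-\bd C)$ is correct, and one can justify (using $(1-r)C-s\subseteq C$ together with invariance of domain) that on the interior of $U'$ the normal cone of $C$ at $x$ agrees with that at $f(x)$. But normal preservation does not force $f(x)=x$ when the Gauss map is only ``injective along one direction'': for the cylinder $D\times[-1,1]\subset\Re^3$, the equation $\nu(f(x))=\nu(x)$ only fixes the angular coordinate of $x$, and the contradiction comes instead from the fact that $s_1=-r\cos\theta$ cannot hold on an open $\theta$-interval. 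More seriously, if $U'$ lands in the relative interior of an $(n-1)$-dimensional face, normal preservation gives no constraint whatsoever --- $f$ just contracts the facet into itself --- so you must arrange that $U'$ avoids all facet interiors. When $C$ has infinitely many facets whose interiors cover $\bd C$ up to a nowhere-dense set (your acknowledged bad case), no choice of $U'$ achieves this, and the ``supplementary pigeonhole'' has to carry the whole proof. Making that pigeonhole rigorous in $\Re^n$, where faces come in every dimension between $0$ and $n-1$, is a substantial argument in its own right, not a patch.

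The paper bypasses all of this by abandoning $rC$ as the witness. It uses the fact (Schneider, Theorems~2.2.4 and 2.2.9) that a non-polytope admits infinitely many smooth boundary points $p_i$ with pairwise distinct outward normals, selects a discrete subfamily, and intersects translates of $r'C$ (with $r<r'<1$) each tangent to $rC$ at one $p_i$. A small perturbation of the translates puts the tangent hyperplanes in general position: at most $n$ pass through any point. The resulting $C$-ball convex body $P'$ then has, by construction, infinitely many faces $H_i'\cap P'$, each of which must meet any generating set (Lemma~\ref{lem:Cballconvex}), while a single point meets at most $n$ of them. This incidence count replaces your Gauss-map rigidity and works uniformly, with no case split between ``strictly convex somewhere'' and ``locally polyhedral everywhere.''
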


\begin{proof}[Proof of Lemma~\ref{lem:kn}]
We can obtain $\frac{1}{2}C$ as the intersection of finitely many translates of $C$:
$\frac{1}{2}C=\bigcap_{i=1}^m (C+v_i)$.
We perturb the translation vectors $v_i$ to obtain another polytope
$P=\bigcap_{i=1}^m (C+w_i)$ in a way that
$|v_i-w_i|<\epsilon$ for a sufficiently small $\epsilon>0$, and $P$ is a simple polytope (that is, every vertex of $C$ is contained in exactly $n$ facets of $C$),
and each facet of $P$ is contained in the relative interior of a facet of $C+w_i$ for some value of $i$, and the $n_i$s are pairwise distinct.
By Lemma~\ref{lem:Cballconvex}, there are $k$ points of $P$ such that every facet of $P$ contains at least one of these points.
Since any point is contained in at most $n$ facets of $P$, we obtain $m\leq kn$.
\end{proof}

\begin{proof}[Proof of Lemma~\ref{lem:polytope}]
Assume that $C\subset\Ren$ is a convex body which is not a polytope, and let $0<r<1$. It follows from
Theorems~2.2.4 and 2.2.9 of \cite{Sch93} that there is an infinite sequence of
pairwise distinct triples $T_i=(p_i,H_i,n_i)$, $i=1,2,3,\dots$, such that $p_i$
is a smooth boundary point of $rC$, $H_i$ is the unique supporting hyperplane of
$rC$ at $p_i$, and $n_i$ is the outer normal unit vector of $H_i$ with respect
to $rC$, for which $n_i\neq n_j$ if $i\neq j$. To see this directly, it is also
easy to construct such triples applying induction.

We choose an infinite subsequence $\{T_i\mid i\in I\}$ of the triples, for which
there is at most one cluster point of $\{p_i\mid i\in I\}$ and $\{n_i\mid i\in
I\}$, resp., and that cluster point is not equal to any $p_i$ and $n_i$.

Let $H_i^+$ be the half-space determined by $H_i$ which contains $rC$. Let $r<r'<1$. Let $C_i$ be that translate of $r'C$ which touches both $rC$ and $H_i$ at $p_i$, $i\in I$. Let $P=\bigcap_{i\in I} C_i$. Since there is no cluster point among the points $p_i$ and vectors $n_i$ ($i\in I$), we can define a sequence $B_i$ of balls of positive radii such that for every $i\in I$, $B_i$ is centered at $p_i$, and $B_i\subseteq \inter (C_j)$ for any $j\neq i$. Therefore, every $C_i$ can be translated by a vector $v_i$ towards the direction $n_i$ within a sufficiently small, but positive distance such that the translates $C'_i=C_i+v_i$, $i\in I$ form $P'=\bigcap_{i\in I} C'_i$ in such a way that $H'_i=H_i+v_i$ is the unique supporting hyperplane of $P'$ at $p'_i=p_i+v_i$, and at most $n$ hyperplanes have a common point among the hyperplanes $H_i$ ($i\in I$).

Then, since the smaller homothetic copies $C_i$ and $C'_i$ ($i\geq 1$) of $C$ are $C$-ball convex sets by Corollary~\ref{cor:addition}, by assumption, $P'$ is a $C$-ball convex hull of finitely many points, so there should be a finite subset of $\bd(P')$ such that every face $H'_i\cap P'_i$ contains at least one element of $S$ (otherwise there would be a translate of $C$ touching $P'$ and having a disjoint boundary from those points, and by Lemma~\ref{lem:Cballconvex}, that
would contradict the fact that $P'$ is a $C$-ball convex hull of those points). But one point can be contained in at most $n$ faces among the infinitely many faces $H'_i\cap P'_i$, so $P'$ can not be the $C$-ball convex hull of finitely many points.
\end{proof}

Theorem \ref{thm:kn} implies the following corollary.

\begin{cor}\label{cor:2n}
Let $C$ be a convex body in $\Ren$. If every $C$-ball convex
set in $\Ren$ is a $C$-spindle, then $C$ is a polytope and it has at most $2n$ facets.
\end{cor}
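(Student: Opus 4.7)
My plan is to derive Corollary~\ref{cor:2n} as a direct application of Theorem~\ref{thm:kn} with parameter $k=2$. The first step is to unwind the definitions: by Definition~\ref{defn:C-spindle}, every $C$-spindle has the form $[p,q]_C = \bconv{\{p,q\}}$, so the hypothesis that every $C$-ball convex set is a $C$-spindle is simply the statement that every $C$-ball convex set is the $C$-ball convex hull of a set of cardinality at most two. In the language of Definition~\ref{defn:k-generated}, this is precisely the condition that every $C$-ball convex set is $2$-generated.

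Once this reformulation is in hand, Theorem~\ref{thm:kn} applied with $k=2$ yields the conclusion immediately: $C$ is an $n$-polytope with at most $2n$ facets. I do not expect a genuine obstacle, since all the heavy lifting (the reduction via Lemma~\ref{lem:polytope} that forces $C$ to be a polytope, and the perturbation argument of Lemma~\ref{lem:kn} that bounds the number of facets by $kn$) is already packaged inside Theorem~\ref{thm:kn}.

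The only bookkeeping point to verify is that the two trivial $C$-ball convex sets, $\emptyset$ and $\Ren$, fit into the $2$-generated framework so that the reformulation above really does apply to \emph{every} $C$-ball convex set. This is harmless: by the convention in Definition~\ref{defn:C-spindle}, $\Ren = [p,q]_C$ for any pair of points $p,q$ that are not contained in a common translate of $C$, so $\Ren$ is a $C$-spindle; and the empty set is the $C$-ball convex hull of the empty collection of points, hence vacuously $k$-generated for every $k \geq 2$. With these trivialities dispatched, the proof consists of nothing more than citing Theorem~\ref{thm:kn} with $k=2$.
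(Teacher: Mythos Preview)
Your argument is correct and matches the paper's approach exactly: the paper simply states that Theorem~\ref{thm:kn} implies Corollary~\ref{cor:2n}, and your observation that ``every $C$-ball convex set is a $C$-spindle'' is precisely the $k=2$ case of Definition~\ref{defn:k-generated} is the intended reduction.
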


While Corollary~\ref{cor:2n} is sharp for parallelotopes, we do not know if it is
the case for Theorem~\ref{thm:kn} for $k\geq 3$. We ask the following question.

\begin{prob}
Let $k \geq 3$ be arbitrary. Is there an $n$-polytope $C\subset\Ren$ with $kn$ facets such that every $C$-ball convex set is $k$-generated?
\end{prob}

In the first part of this section, we find an upper bound for the number of
facets of those polytopes $C$ 
for which every $C$-ball convex set is $k$-generated.
It is also natural to estimate these numbers from below.
Now, we consider the following problem: For a fixed integer $k\geq 2$, what is
the maximum number $m=m(n,k)$ for which every $n$-polytope $C$ that has at
most $m$ facets, also has the property that every $C$-ball convex set is
$k$-generated? We have the following partial solution for this problem.

\begin{thm}\label{thm:lowerbound}
Let $n  \geq 3$.
\begin{itemize}
\item[(1)] If $2 \leq k \leq n$, then there is an $n$-polytope $C\subset\Ren$ having $n+k+2$ facets such that not every $C$-ball convex set is $k$-generated.
\item[(2)] If $k \geq 2$, and $C\subset\Ren$ is any $n$-polytope having at most $n+k+1$ facets, then every $C$-ball convex set is $k$-generated.
\end{itemize}
\end{thm}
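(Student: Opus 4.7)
The theorem has two independent parts; I outline the approach for each.

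\emph{Part (2).} Fix $C$ with $m \leq n+k+1$ facets and an arbitrary $C$-ball convex set $P$, which is a polytope whose facet-normals are among those of $C$ and hence has at most $m$ facets. By Lemma~\ref{lem:Cballconvex}, a finite $S \subseteq P$ satisfies $\bconv{S} = P$ exactly when $S$ meets $\bd(C+v)$ for every support translate $C+v$ of $P$; since the essential support translates correspond to the facets of $P$, the task reduces to finding $k$ points of $P$ hitting every facet of $P$. For a simple $P$ each vertex lies on exactly $n$ facets, so $k$ vertices can cover up to $kn$ facets: the counting inequality $kn \geq n+k+1$ is $(k-1)(n-1) \geq 2$, which holds whenever $n \geq 3$ and $k \geq 2$, so covering is numerically feasible. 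The combinatorial step is to realize such a cover explicitly: argue greedily by picking a first vertex $v_1$ covering $n$ facets, so at most $k+1$ residual facets remain; iteratively pick further vertices in residual facets, using that in an $n$-polytope with so few facets, one can always select a next vertex whose overlap with the already-covered facet set is small. Non-simple $P$ is easier because vertices there lie on more facets.

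\emph{Part (1).} We construct, for each $(k,n)$ with $2 \leq k \leq n$, an $n$-polytope $C$ with exactly $n+k+2$ facets admitting a $C$-ball convex set that is not $k$-generated. For $k = 2$ and any $n \geq 3$, take $C = C_1 \times \Delta_{n-2}$ with $C_1 \subseteq \Re^2$ a pentagon and $\Delta_{n-2}$ an $(n-2)$-simplex; then $C$ has $5 + (n-1) = n+k+2$ facets, and by the product proposition of Section 3, $\bconv[C]{S}$ projects onto $\bconv[C_1]{\proj_1 S}$. Since the pentagon has $5 > 2 \cdot 2$ facets, Theorem~\ref{thm:kn} forces the pentagon not to be $2$-generated, and consequently $C$ cannot be realized as $\bconv[C]{S}$ for any $|S| \leq 2$. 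For $k \geq 3$ the direct planar cross-section construction is insufficient—for example a regular hexagon is already $3$-generated, since three points placed on three edges with outer normals positively spanning $\Re^2$ make the tangent cones of the reflected translates at the origin intersect only at $\{0\}$. For these $k$ one must use a genuinely higher-dimensional construction, for instance attaching $k+1$ extra facets to a simplex by truncating $k+1$ vertices (giving $(n+1)+(k+1) = n+k+2$ facets) and exploiting a three-dimensional incidence obstruction, or stacking analogous pyramids on a prism base, so that a tighter covering argument forces more than $k$ generating points.

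\emph{Main obstacle.} The substantive work in part (2) is the combinatorial covering claim at the boundary of the counting regime, where $kn$ is essentially tight against $n+k+1$ and the choice of $k$ vertices must be made globally compatible with the facet-vertex incidence structure of $P$. In part (1), the challenge is producing a construction that works uniformly for all $(k,n)$ in the stated range—especially for $k \geq 3$, where the simple planar obstruction coming from Theorem~\ref{thm:kn} is no longer available and one must instead rule out $k$-generation by a direct geometric analysis of the tangent cones of $-C$ at a candidate vertex of $\bigcap_i (s_i - C)$.
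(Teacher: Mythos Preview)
Your proposal is largely an outline with the decisive steps deferred, and in both parts the route you sketch diverges from what the paper actually does.

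\textbf{Part (1).} You split into cases on $k$ and build a product with a pentagon for $k=2$, then concede you do not have a working construction for $k\geq 3$. The paper uses a single uniform construction for all $2\leq k\leq n$: start from an $n$-simplex and truncate $k+1$ of its vertices by pairwise disjoint hyperplanes (this requires $k+1\leq n+1$, which is exactly the hypothesis $k\leq n$), obtaining $C$ with $(n+1)+(k+1)=n+k+2$ facets. The verification is then a one-line pigeonhole argument: for $0<r<1$ the set $rC$ is $C$-ball convex, its $k+1$ truncation facets are pairwise disjoint, and any $k$ points miss one of them, so Lemma~\ref{lem:Cballconvex} gives $\bconv{T}\neq rC$ for every $T$ with $\card T\leq k$. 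You actually mention this truncation idea in passing for $k\geq 3$, but then look for a ``tighter covering argument'' and an analysis of tangent cones---none of that is needed, and the same construction already handles $k=2$.

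\textbf{Part (2).} Here there is a real gap. Your greedy scheme (pick a vertex, cover $n$ facets, then cover the remaining $k+1$ facets with $k-1$ further points) is not justified: after the first vertex, the remaining $k+1$ facets may well be pairwise disjoint or nearly so, and there is no reason a subsequent vertex must cover more than one new facet. The numerical inequality $(k-1)(n-1)\geq 2$ only says covering is not \emph{a priori} impossible; it does not produce a cover. The paper's argument is structurally different: after reducing to a simple $P$ with exactly $n+k+1$ facets, it fixes a vertex $v$, centrally projects $P$ from $v$ onto a hyperplane, and obtains an $(n-1)$-simplex whose facets are the images of the $n$ facets through $v$, tiled by the images $A_1,\dots,A_{k+1}$ of the remaining facets. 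The problem becomes: either find $k-1$ points hitting all $A_i$, or $k$ points hitting all $A_i$ and all simplex facets. A short case analysis (three $A_i$ sharing a point; otherwise two disjoint $A_i$ each containing a simplex vertex, combined with an edge argument) finishes it. This projection-plus-tiling reduction is the genuine content of the proof, and your proposal does not supply it or an alternative.
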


The example of a pentagon shows that the assertion in (2) fails for $n=k=2$. It is easy to see that (2) also holds for $n=2$ and $k\geq 3$.

\begin{proof}
To prove (1), assume $2\leq  k\leq n$, and let $C\subset\Ren$ be the $n$-polytope which is obtained
from an $n$-simplex $S_n$ by intersecting it with 
$k$ closed half-spaces near $k$ vertices of $S_n$ so that the $k$ new facets are pairwise disjoint. 
Let $0<r<1$ arbitrary. Observe that for any set $T$ of $k$ points, $rC$ has a facet disjoint from $T$. Thus, by Lemma~\ref{lem:Cballconvex}, $rC$ is not the $C$-ball convex hull of at most $k$ points.

Next, we prove (2). Now assume that $k \geq 2$ and $C\subset\Ren$ is an $n$-polytope with $n+k+1$ facets.
Let $P$ be an arbitrary $C$-ball convex set of $\Re^n$, $P\neq \Re^n$. Then $P$ is a polytope of at most $n+k+1$ facets.
We will assume that $C$ is a simple $n$-polytope. We may do so, since it is easy
to see that for every $C$-ball convex set $P$ there is a sequence
$\{P_i\}_{i=1}^{\infty}$ of simple $n$-polytopes such that every $P_i$ is a
$C_i$-ball convex set, and $P_i\to P$, $C_i\to C$ in the Hausdorff metric, as
$i\to \infty$, where $C_i\subset\Ren$ is an $n$-polytope for every $i=1,2,3,\dots$.
Then any sequence of at most $k$ element subsets which span $P_i$ as a $C_i$-ball convex
hull, for $i=1,2,3,\dots$, has a subsequence whose elements converge to an at most $k$
element subset of $P$. Clearly, the $C$-ball convex hull of 
this set is $P$. By a similar limit argument, we may further assume that $\dim
P=n$, $P$ is a simple polytope, and $P$ has exactly $n+k+1$ facets.

We need to show that there are $k$ points whose $C$-ball convex hull is $P$.
By Lemma~\ref{lem:Cballconvex}, it is sufficient to prove that there are $k$ points of $P$ such that
for each facet $F$ of $C$ there is a translate $C+t$ of $C$ ($t\in\Ren$) that contains $P$ and for which $F+t$ contains at least one of the $k$ points.
With the above assumption on the number of facets and dimension of $P$, it is equivalent to the existence of a set $T$ of $k$ points such that every facet contains at least one element of $T$.

Let $v$ be a vertex of $P$. Let $H$ be a hyperplane such that $v\notin H$ and $H$ is parallel to a supporting hyperplane $H'$ of $P$ at $v$ for which $H'\cap P=\{v\}$. Let $\pi$ be the central projection of $\Ren\smallsetminus H'$ to $H$ from the point $v$, that is 
$\pi(x)=\textrm{aff}(v,x)\cap H$. Then $\pi(P)$ is bounded, in fact, it is an $(n-1)$-dimensional simplex $S_{n-1}$ since $P$ is a simple polytope. Consider the projection of the facets of $P$ under $\pi$. Then the images of those facets which contain $v$ are the facets of an $S_{n-1}$, and we denote them by $\mathcal F=\{F_1,F_2,\dots,F_n\}$. The images of the remaining $k+1$ facets form a tiling of $S_{n-1}$, we denote them by $\mathcal A=\{A_1,A_2,\dots,A_{k+1}\}$. 
Clearly, every $A_i$ is an $(n-1)$-polytope. Denote by $V$ the vertex set of $S_{n-1}$.

Obviously, there are $k$ points that span $P$ as the $C$-ball convex hull of those points if and only if either there are $k-1$ points of $S_{n-1}$ such that 
every $A_i$ contains at least one of them, or there are $k$ points of $S_{n-1}$ such that every $A_i$ and $F_j$ contains at least one of them. 
If there are three elements of $\mathcal A$ having a common point, then such a set of $k-1$ points exists: 
we pick an arbitrary point belonging to the intersection of the three elements of $\mathcal A$ 
and one point from each remaining element of $\mathcal A$. 
So, from now on, we may assume that there is no common point of three elements of~$\mathcal A$.

Since $\mathcal A$ contains more than one element, no $A_i$ contains all the vertices of $S_{n-1}$.

Now we show that there are two disjoint elements of $\mathcal A$ such that both contain at least one vertex of $S_{n-1}$. Let $A_1$ and $A_2$ be an intersecting pair, $A_i\cap V\neq \emptyset$, $i=1,2$.
Consider an $(n-2)$-dimensional affine subspace $H_{12}$
that separates them in $H$. Then $A_1\cap A_2=H_{12}\cap S_{n-1}$ since
$H_{12}\cap S_{n-1}$ is covered by $\mathcal A$, and it can not intersect
any element of $\mathcal A$ distinct from $A_1$ and $A_2$ (otherwise,
there would be a common point of $A_1\cap A_2$ and some $A_i$, for
$i\neq 1,2$). Thus we also obtain that $A_1\cap A_2$ is
$(n-2)$-dimensional. Now, if $p\in A_1$, $q\in A_2$ are arbitrary
points, then let $x=[p,q]\cap H_{12}$. We obtain $[p,x]\subseteq A_1$,
$[x,q]\subseteq A_2$, so $A_1\cup A_2$ is convex, and therefore it can
not contain all vertices of $S_{n-1}$. So there is an $A_j$, $j\neq 1,2$, say $A_3$, such that 
$A_3\cap V\neq \emptyset$, and
either $A_1$ or $A_2$,  say $A_1$, is disjoint from $A_3$, because $A_3$
is disjoint from $H_{12}$. So we found two disjoint elements of $\mathcal A$, $A_1$ and $A_3$, even if $A_1$ and $A_2$ were intersecting.

Let us choose two disjoint elements of $\mathcal A$, say $A_1$ and $A_2$, such that
$A_i\cap V\neq \emptyset$, $i=1,2$.
Since $S_{n-1}$ has $n\geq 3$ vertices, it follows that one of them, say $A_1$,
contains at most $n-2$ vertices of $S_{n-1}$.
Now, we pick a vertex $w$ in $A_2\cap V$ and an edge $E$ of $S_{n-1}$
connecting a vertex from $V\cap A_1$ and a vertex from $V\smallsetminus (A_1\cup \{w\})\neq \emptyset$.
Furthermore, we may pick an $A_i$, say $A_3$, such that $E\cap A_1\cap A_3\neq \emptyset$, since $E$ intersects $A_1$ but also has points outside $A_1$.
Finally, the desired $k$ points: Take a point $u\in E\cap A_1\cap A_3$,
take $w\in A_2$, and take further $k-2$ points,
one from each remaining $A_i$.
As $u$ is contained in $E$, it is contained in each facet of $S_{n-1}$ but the two that do not contain $E$. These two facets contain $w$, and thus, every element of $\mathcal F$ contains
at least one of the points. Since, clearly, the same holds for the elements of $\mathcal A$ as well, these points indeed satify the required property.
\end{proof}

We ask the following question.

\begin{prob}
For any given $n \geq 3$ and $k \geq 2$, find a geometric characterization of
those $n$-polytopes $C\subset\Ren$ for which every $C$-ball convex set is
$k$-generated. In particular, find a geometric characterization of those
$n$-polytopes $C$ for which every $C$-ball convex set is a $C$-spindle.
\end{prob}

\section{Stability of the operation $\Bp$ and covering intersections of balls}\label{sec:covering}

We consider the Levy-Markus-Gohberg-Boltyanski-Hadwiger Covering Problem
(also known as the Illumination Problem) for two families
of convex bodies, denoted by $\D$ and $\DT$ (see Definition~\ref{def:DD}, and Remark~\ref{rem:DD}), associated to any convex body $C$.

The covering number (see Definition~\ref{def:cover}) of sets of Euclidean
constant width (members of $\D[B^n]$) has been studied extensively.
One reason for its popularity is its connection to Borsuk's problem
on partitioning convex sets into sets of smaller diameter.
Weissbach \cite{W96} and Lassak \cite{L97} proved that the covering number
of a set of Euclidean constant width in $\Re^3$ is at most six.
In \cite{BLNP07}, this result is extended to any set $K$ obtained as
the intersection of Euclidean unit balls with the property that the
set of the centers is of diameter at most one (members of $\DT[B^n]$).
Recently, further bounds on the covering number of sets in $\DT[B^n]$ in dimensions $n=4,5$ and $6$
have been found by Bezdek and Kiss \cite{BK09}.
The best general bound is due to Schramm \cite{Sch88} who proved that
the covering number of a set of Euclidean constant width in $\Ren$ is at most
$5n^{3/2}(4+\log n)\left(\frac{3}{2}\right)^{n/2}$.
This result has been extended to members of $\DT[B^n]$ as well, cf. Bezdek \cite{B10}. 
For surveys on covering (illumination) see Bezdek \cite{B07}, Martini and Soltan \cite{MS99} and Boltyanski, Martini and Soltan \cite{BoMaSo}.

In this section, we study the stability of bounds on the covering number of convex sets in $\D$ and in $\DT$. First, we prove that the operation $\Bp$ is stable in a certain sense (Proposition~\ref{prop:limitofsets}), and then we deduce our stability results concerning covering numbers.

\begin{defn}\label{def:DD}
Let $C$ be a convex body in $\Ren$. Let
\[
\D=\{K\subset\Ren : K=\Bp (K) \}, \mbox{ and}
\]
\[
\DT=\{K\subset\Ren : K=\Bp (X) \mbox{ for some } X\subset\Ren \mbox{ with } X\subseteq \Bp(X)\}.
\]
\end{defn}

\begin{rem}\label{rem:DD}
For any convex body $C$, we have $\D\subseteq\DT$.
Moreover, if $C=-C$ then $\D$ is the family of diametrically maximal sets of diameter one
in the Minkowski space (that is, finite dimensional real Banach space) with unit ball $C$.
Since in the Euclidean space, a convex set is diametrically maximal if, and only if,
it is of constant width, it follows that $\D[B^n]$ is the \emph{family of sets of Euclidean constant width one}.
\end{rem}

\begin{defn}\label{def:cover}
 Let $K\in\Re^n$ be a convex body. The \emph{covering number} (also called the \emph{illumination number}) $i(K)$ of $K$ is the
 minimum number of positive homothetic copies of $K$, with homothety ratio less than one,
 that cover $K$. For a family $\FF$ of convex bodies, we set $i(\FF)=\max\{i(K) : K\in\FF\}$.
 We note that the illumination number is usually defined via the notion of illumination by directions
 (or light sources), which we do not follow here --- the equivalence of those definitions with the one given here is well known, cf. \cite{B07}.
\end{defn}

The covering number is invariant under non-singular affine transformations, thus it is natural to use the Banach-Mazur distance to compare two convex bodies $K$ and $L$:
\[
d(K,L)=\inf\{\lambda>0 : \ K \subset T(L) \subset \lambda(K)\},
\]
where the infimum is taken over all non-singular affine transformations $T$.
Recall that this distance is multiplicative (the triangle inequality holds
with multiplication instead of addition) and
the distance of a convex body from any non-singular affine image
of itself is one.


We phrase, informally, the problem of the stability of the covering number in the following two ways.

\emph{Question 1.}
Fix a convex body $C$. If $K$ is `close' to a set $L\in\D$ (resp., to a set $L\in\DT$),
does it follow that $i(K)\leq i(\D)$ (resp.,$i(K)\leq i(\DT)$)?

\emph{Question 2.}
Fix a convex body $C$. If $D$ is `close' to $C$, does it follow that $i(\D[D])\leq i(\D)$ and
$i(\DT[D])\leq i(\DT[C])$?

Recall that $i(K)=n+1$ for any smooth convex body, while $i([0,1]^n)=2^n$ for the cube.
It illustrates that the covering number, in general, may vary
significantly along arbitrarily small perturbations.
However, Theorems~\ref{thm:easyStability} and \ref{thm:hardStability} provide
positive answers to both questions.

\begin{thm}\label{thm:easyStability}
For every $n\in\Ze^+$ and for every convex body $C\subset\Ren$, there is a $\delta>1$ such that if $d(K,L)<\delta$
for a convex body $K$ in $\Ren$ and $L\in\D$ (resp., $L\in\DT$),
then $i(K)\leq i(\D)$ (resp., $i(K)\leq i(\DT)$).
\end{thm}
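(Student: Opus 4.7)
The plan is to transfer an efficient illumination covering of $L$ to a covering of $K$ through the affine map that realizes the Banach--Mazur distance, and to control the expansion of the homothety ratios by a uniform bound obtained via compactness. Set $m:=i(\D)$ for the first assertion, $m:=i(\DT)$ for the second, and assume $m<\infty$ (otherwise the conclusion is vacuous). For each $L$ in the respective family, $i(L)\leq m$ gives a covering $L\subseteq\bigcup_{j=1}^m(\mu_jL+t_j)$ with all $\mu_j<1$, so
\[
\mu_*(L):=\inf\left\{\max_{1\leq j\leq m}\mu_j\,:\ L\subseteq\bigcup_{j=1}^m(\mu_jL+t_j),\ 0<\mu_j<1\right\}
\]
is strictly less than $1$. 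If $d(K,L)<\delta$, there is a non-singular affine $T$ and some $\lambda<\delta$ with $K\subseteq T(L)\subseteq \lambda K$ (up to translation). Affine maps preserve homothety ratios, so the covering of $L$ becomes $T(L)\subseteq\bigcup_j(\mu_jT(L)+s_j)$, and combining this with $T(L)\subseteq \lambda K$ yields $K\subseteq \bigcup_j(\mu_j\lambda K+t'_j)$, a covering of $K$ by $m$ homothets of ratios $\mu_j\lambda$. Provided $\delta<1/\mu_*(L)$, a suitably chosen covering of $L$ transfers to one with all ratios strictly less than $1$, so $i(K)\leq m$.

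The entire proof therefore reduces to the uniform claim $\mu_*^{\max}:=\sup_{L}\mu_*(L)<1$, and this is the main obstacle. I would resolve it by compactness. Translating each $L\in\D$ so that a distinguished point of $L$ is at the origin, one has $L\subseteq C$, so the normalized family $\D_0$ embeds in the Hausdorff-compact space of closed subsets of $C$. Closure of $\D_0$ under Hausdorff limits follows from Proposition~\ref{prop:limitofsets} on the stability of $\Bp$: if $L_k=\Bp(L_k)$ and $L_k\to L$, then $\Bp(L)=\lim\Bp(L_k)=L$, so $L\in\D_0$. For the $\DT$-version one passes to a subsequence so that the generating sets $X_k\subseteq L_k\subseteq C$ also converge (say to $X$), and Proposition~\ref{prop:limitofsets} gives $L=\Bp(X)\in\DT_0$. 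In either case the normalized family is Hausdorff-compact, and since $\mu_*$ is translation-invariant it suffices to bound $\mu_*$ on it.

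Finally, $\mu_*$ is upper semi-continuous at each full-dimensional $L$: given a covering with ratios $\mu_j\leq\mu_*(L)+\varepsilon<1$ and an inradius $\rho>0$ of $L$, the inclusion $\eta B^n\subseteq(\eta/\rho)L$ combined with the convexity identity $\alpha L+\beta L=(\alpha+\beta)L$ produces a covering of $L+\eta B^n$ by translates of $(\mu_j+\eta/\rho)L$, which in turn covers any $L_k$ within Hausdorff distance $\eta$ of $L$; a further $O(\eta)$ enlargement of the ratios replaces each homothet of $L$ by a homothet of $L_k$ (using that, for $k$ large, the inradius of $L_k$ is still at least $\rho/2$), yielding $\limsup_k\mu_*(L_k)\leq\mu_*(L)+\varepsilon$. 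Upper semi-continuous functions on compact sets attain their supremum, so $\mu_*^{\max}<1$, and any $\delta\in(1,1/\mu_*^{\max})$ works. The delicate subtlety is that Hausdorff limits of full-dimensional members of $\D$ could a priori flatten, so the inradius in the USC argument might collapse; however, the Banach--Mazur hypothesis $d(K,L)<\infty$ forces $L$ to be full-dimensional, and the full-dimensional locus is Hausdorff-open with inradius bounded below on Hausdorff neighborhoods of each of its points, which is all the argument requires.
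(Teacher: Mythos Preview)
Your approach is essentially the paper's: compactness, upper semi-continuity, and closure of $\D$ (resp.\ $\DT$) under limits via Proposition~\ref{prop:limitofsets}. The paper packages the transfer step as Proposition~\ref{prop:semicont} (upper semi-continuity of $i$ on the Banach--Mazur space $\KK^n$) and then argues that the image of $\D$ is closed in $\KK^n$; you unpack this into the explicit ratio $\mu_*$ and work in the Hausdorff metric, normalizing by translating into $C$ rather than via John's ellipsoid. These are expository variations on the same skeleton.

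One loose end remains. When you invoke Proposition~\ref{prop:limitofsets} to prove that $\D_0$ (and $\DT_0$) is Hausdorff-closed, you do not verify its hypothesis that $\inter K\neq\emptyset$ or $\inter\Bp(X)\neq\emptyset$. Your final paragraph notices the flattening issue, but the resolution you give---that the full-dimensional locus is Hausdorff-open---only secures upper semi-continuity of $\mu_*$ \emph{at} each full-dimensional point; it does not exclude a sequence $L_k\in\D_0$ with $\mu_*(L_k)\to 1$ whose Hausdorff limit is flat and hence lies outside $\D_0$, so that no contradiction is reached. The missing line is: from $L_k=\Bp(L_k)$ one has $L_k-L_k\subseteq C$, hence $L-L\subseteq C$ in the limit, so $L\subseteq\Bp(L)$; now Remark~\ref{rem:dbody} gives $\inter\Bp(L)\neq\emptyset$, Proposition~\ref{prop:limitofsets} applies, and $L=\Bp(L)\in\D_0$ is a body. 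The $\DT$ case is the same with $X$ in place of $L$. (The paper's John normalization handles the interior hypothesis differently, by forcing $B^n$ into every term and hence into the limit.)
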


\begin{thm}\label{thm:hardStability}
For every $n\in\Ze^+$ and for every convex body $C\subset\Ren$, there is a $\delta>1$ such that if $d(C,D)<\delta$
for a convex body $D\subset\Ren$, then
\begin{enumerate}
\item $i(\D[D])\leq i(\D[C])$  and
\item $i(\DT[D])\leq i(\DT[C])$.
\end{enumerate}
\end{thm}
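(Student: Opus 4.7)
The strategy is to combine Theorem~\ref{thm:easyStability} with Proposition~\ref{prop:limitofsets} (stability of $\Bp$). By Theorem~\ref{thm:easyStability}, there is a constant $\delta_1>1$ depending on $C$ such that any convex body $K$ with $d(K,L)<\delta_1$ for some $L\in\D$ (respectively $L\in\DT$) already satisfies $i(K)\le i(\D)$ (resp.\ $i(K)\le i(\DT)$). It therefore suffices to exhibit $\delta>1$ such that $d(C,D)<\delta$ forces every $K\in\D[D]$ (resp.\ $K\in\DT[D]$) to lie within Banach--Mazur distance $\delta_1$ of $\D$ (resp.\ $\DT$).

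I would argue this reduction by contradiction. Suppose part~(1) fails; then there exist sequences $D_m$ with $d(C,D_m)\to 1$ and $K_m\in\D[D_m]$ with $d(K_m,\D)\ge\delta_1$. After passing to a subsequence and applying affine normalizations---using that $\D[\cdot]$, $i(\cdot)$, and the Banach--Mazur distance are all affine invariant---we may assume $D_m\to C$ in the Hausdorff metric and, after a translation putting $0\in K_m$, that $K_m\subseteq D_m$, so $\{K_m\}$ is uniformly bounded. Blaschke selection then furnishes a subsequence with $K_m\to K_\infty$ in Hausdorff metric, and applying Proposition~\ref{prop:limitofsets} to the identity $K_m=\Bp[D_m](K_m)$ yields $K_\infty=\Bp(K_\infty)$, so $K_\infty\in\D$.

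The main obstacle is that Hausdorff convergence $K_m\to K_\infty$ implies Banach--Mazur convergence only when $K_\infty$ has non-empty interior. To secure this I would invoke the fact that in the symmetric case $C=-C$, elements of $\D[D_m]$ are sets of constant width one in the $D_m$-norm, and a classical inradius bound for constant-width sets in Minkowski spaces yields a uniform positive lower bound on their $D_m$-inradius; since $D_m\to C$ this gives a uniform Euclidean inradius lower bound on $K_m$ that passes to the limit $K_\infty$. The non-symmetric case reduces to the symmetric one, since $\D$ is nontrivial essentially only when $C$ is close to being symmetric. Once $K_\infty$ is a convex body, $d(K_m,K_\infty)\to 1$ in Banach--Mazur, contradicting $d(K_m,\D)\ge\delta_1$.

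For part~(2) the same compactness argument works: writing each $K_m=\Bp[D_m](X_m)$ with $X_m\subseteq K_m$, one extracts a subsequence along which both $X_m\to X_\infty$ and $K_m\to K_\infty$ converge in Hausdorff metric, whereupon Proposition~\ref{prop:limitofsets} gives $K_\infty=\Bp(X_\infty)$ with $X_\infty\subseteq K_\infty$, so $K_\infty\in\DT$; the remainder, including the uniform lower bound on the inradius of $K_m$ needed to upgrade Hausdorff to Banach--Mazur convergence, follows exactly as in part~(1).
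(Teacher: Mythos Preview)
Your overall strategy matches the paper's proof: reduce via Theorem~\ref{thm:easyStability} to showing that every element of $\D[D]$ (resp.\ $\DT[D]$) lies Banach--Mazur close to $\D$ (resp.\ $\DT$), then argue by contradiction with sequences $D_m\to C$, using Blaschke selection and Proposition~\ref{prop:limitofsets}.

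The weak point is your treatment of the non-empty interior hypothesis. You invoke a uniform inradius bound for constant-width sets when $C=-C$, and then assert that ``the non-symmetric case reduces to the symmetric one, since $\D$ is nontrivial essentially only when $C$ is close to being symmetric.'' This last claim is vague and unjustified; nothing in the paper supports it, and the theorem is stated for arbitrary convex bodies $C$. Even in the symmetric case your inradius bound must be made uniform as the norm $D_m$ varies, which you do not address.

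The paper sidesteps all of this with Remark~\ref{rem:dbody}. Once $X_m\to X_\infty$ in Hausdorff distance, the inclusion $X_m\subseteq\Bp[D_m](X_m)$ passes to the limit: for $x,y\in X_\infty$, approximate by $x_m,y_m\in X_m$ with $x_m\in y_m+D_m$, whence $x\in y+C$; thus $X_\infty\subseteq\Bp(X_\infty)$. Remark~\ref{rem:dbody} then gives $\inter\Bp(X_\infty)\neq\emptyset$, which is exactly the hypothesis Proposition~\ref{prop:limitofsets} needs, and simultaneously ensures that $K_\infty=\Bp(X_\infty)$ is a genuine body so that Hausdorff convergence upgrades to Banach--Mazur convergence. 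This argument is uniform in $C$ and requires no inradius estimates or symmetric/non-symmetric split. Replace your third paragraph with this observation and the proof is complete.
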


The main tool in proving these results is the
following observation that shows that the operation $\Bp$ (and, similarly, $\Bm$) is stable in a certain sense.

\begin{prop}\label{prop:limitofsets}
Let $C_1,C_2,\ldots$ be a sequence of convex bodies in $\Ren$ converging to a convex body $C$ in the metric space of closed convex subsets of $\Ren$ equipped with the Hausdorff metric. Let $X_1,X_2,\ldots$ be a sequence of closed sets in $\Ren$ converging to a set $X$ such that the sequence $\Bp[C_i](X_i)$ also converges (to some set $K$). Assume that $\inter K\neq\emptyset$
or $\inter(\Bp(X))\neq\emptyset$. Then $K=\Bp(X)$.
\end{prop}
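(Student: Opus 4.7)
The plan is to prove the two inclusions $K \subseteq \Bp(X)$ and $\Bp(X) \subseteq K$ separately. The first requires only Hausdorff convergence and no interior hypothesis, whereas the second is where the hypothesis on interiors is essential. I would then observe that the two forms of the hypothesis are equivalent once the first inclusion is established.

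First I would show $K \subseteq \Bp(X)$ by a direct limit argument. Take $y \in K$; by Hausdorff convergence choose $y_i \in \Bp[C_i](X_i)$ with $y_i \to y$. For any fixed $v \in X$, by $X_i \to X$ in Hausdorff metric, pick $v_i \in X_i$ with $v_i \to v$. Then $y_i - v_i \in C_i$ and $y_i - v_i \to y - v$; since $C_i \to C$ in Hausdorff metric forces any limit point of a sequence in $C_i$ to lie in $C$, we get $y - v \in C$. As $v \in X$ was arbitrary, $y \in \Bp(X)$. This inclusion gives $\inter K \subseteq \inter\Bp(X)$, so the two alternative hypotheses are equivalent, and we may assume $\inter\Bp(X) \neq \emptyset$.

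For the harder inclusion, pick $z \in \inter\Bp(X)$, so there exists $r > 0$ with $\overline{B}(z, 2r) \subseteq \Bp(X)$; equivalently, $\overline{B}(z - v, 2r) \subseteq C$ for every $v \in X$. The key step is to show that $z \in \Bp[C_i](X_i)$ for all sufficiently large $i$, which would then give $z = \lim z \in K$. Fix $\epsilon \in (0, r/2)$ and take $N$ so large that $\dha(C_i,C) < \epsilon$ and $\dha(X_i,X) < \epsilon$ for $i \geq N$. For any $v_i \in X_i$, choose $v \in X$ with $|v_i - v| < \epsilon$; then $\overline{B}(z - v_i, r) \subseteq C$. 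A standard separating-hyperplane argument now shows that any point $p$ with $\overline{B}(p,r) \subseteq C$ must lie in $C_i$ when $\dha(C_i,C) < r/2$: if $p \notin C_i$, a unit normal $u$ separates $p$ from $C_i$, and any $q' \in C_i$ that $\epsilon$-approximates $q := p + (r/2)u \in C$ satisfies $\langle u, q' \rangle \geq \langle u, p\rangle + r/2 - \dha(C_i,C) > \langle u, p \rangle$, a contradiction. Applied uniformly to $p = z - v_i$, we conclude $z - v_i \in C_i$ for every $v_i \in X_i$, hence $z \in \Bp[C_i](X_i)$, hence $z \in K$. Since $\Bp(X)$ is convex with nonempty interior, $\Bp(X) = \cl(\inter \Bp(X)) \subseteq \cl K = K$, completing the proof.

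The main subtlety is the uniformity in the last step: the threshold $N$ for inclusion $z - v_i \in C_i$ must depend only on $r$ and not on the particular $v_i$. This uniformity is supplied precisely by the fact that $\overline{B}(z,2r) \subseteq \Bp(X)$ yields a $v$-independent inner radius of $C$ at every point $z - v$, which then survives the perturbation $v \mapsto v_i$ as an inner radius $r$, uniformly over $v_i \in X_i$. Beyond this, the only mild care needed is the usual fact that the Hausdorff limit of closed (convex) sets is closed (convex), so that $K$ is closed and $\Bp(X)$ is closed convex, legitimizing the final passage to closures.
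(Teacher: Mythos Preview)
Your proof is correct and follows essentially the same two-inclusion strategy as the paper. The only notable difference is that you establish the full inclusion $K\subseteq\Bp(X)$ by a clean limit argument, whereas the paper only proves $\inter K\subseteq\Bp(X)$; conversely, for the other direction the paper uses a short contrapositive (if $u\notin K$, pick $q_k\in X_k$ with $u\notin q_k+C_k$, pass to a subsequential limit $q\in X$, and conclude $u\notin q+\inter C$), while you give a more explicit direct argument via a uniform inner-ball and separating-hyperplane estimate. Both routes are equivalent in spirit and effort.
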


\begin{proof}
First, we show that $\inter(\Bp(X))\subseteq K$. Let $u\in\Ren\setminus K$.
Then, for infinitely many $k\in\Ze$, there is a $q_k\in X_k$ such that $u\notin q_k+C$.
By taking a subsequence, we may assume that the $q_k$s converge to a point, say $q$. Clearly, $q\in X$. 
Moreover, we have $u\notin q+\inter C$. Thus, $u\notin\inter(\Bp(X))$.

Next, we show that $\inter K \subseteq \Bp(X)$. Let $u\in\inter K$. Then, there is a $\delta>0$ such that, for all sufficiently large $n\in\Ze$, $u+\delta B^n \subset\Bp[C_n](X_n)$. It follows that,
for all sufficiently large $n\in\Ze$, $u\in\Bp[C_n](X)$. Hence, $u\in\Bp(X)$.
\end{proof}

We leave it as an exercise to show that the condition
$\inter(\ldots)\neq\emptyset$ cannot be removed.

Let $\KK^n$ denote the space of affine equivalence classes of convex bodies in $\Ren$ endowed with the Banach-Mazur distance. We show that
$i:\KK^n\to\Ze^+$ is upper semi-continuous.

\begin{prop}\label{prop:semicont}
Let $K\in\KK^n$. Then there is a $\delta>1$ such that
$i(L)\leq i(K)$ for any $L\in\KK^n$ with $d(K,L)<\delta$.
\end{prop}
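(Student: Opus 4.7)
The plan is to exploit the slack built into any optimal illumination cover of $K$. Write $m = i(K)$ and fix a covering
$$K \subset \bigcup_{j=1}^{m} (\lambda_j K + t_j)$$
with $0 < \lambda_j < 1$ for all $j$. Let $\Lambda = \max_j \lambda_j < 1$. My candidate threshold is $\delta = 1/\Lambda > 1$, and the aim is to show that this choice of $\delta$ works.

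Given any $L$ with $d(K,L) < \delta$, the definition of the Banach-Mazur distance and of the infimum produces a nonsingular affine transformation $T$ and a number $\lambda < \delta$ satisfying $K \subset T(L) \subset \lambda K$. Since the illumination number is affine invariant, $i(L) = i(T(L))$, so it is enough to construct a cover of $T(L)$ by at most $m$ positive homothets of $T(L)$ with ratios strictly less than one.

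The key step is to dilate the fixed covering of $K$ by the factor $\lambda$, obtaining $\lambda K \subset \bigcup_j (\lambda\lambda_j K + \lambda t_j)$, and then chain this with the two inclusions $T(L) \subset \lambda K$ and $K \subset T(L)$ to produce
$$T(L) \subset \lambda K \subset \bigcup_{j=1}^{m} (\lambda\lambda_j K + \lambda t_j) \subset \bigcup_{j=1}^{m} \bigl(\lambda\lambda_j T(L) + \lambda t_j\bigr).$$
The resulting homothety ratios satisfy $\lambda\lambda_j \leq \lambda\Lambda < \delta\Lambda = 1$, so each piece is a positive homothet of $T(L)$ with ratio less than one, giving $i(T(L)) \leq m$, as required.

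I do not foresee any serious obstacle: the whole argument is a short computation using only the definition of $d(\cdot,\cdot)$, the affine invariance of $i$, and the fact that $K \subset T(L) \subset \lambda K$ lets one sandwich a cover of $K$ between a scaled cover of itself and a cover of $T(L)$. The only subtlety worth flagging is the choice $\delta = 1/\Lambda$, which is the sharpest threshold compatible with keeping $\lambda\lambda_j < 1$ uniformly in $j$; any strictly smaller choice of $\delta$ would also suffice, but using $\Lambda$ rather than the individual $\lambda_j$'s is what makes the argument uniform over the finitely many homothets.
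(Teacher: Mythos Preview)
Your proof is correct and follows essentially the same approach as the paper's: sandwich $L$ (up to an affine map) between $K$ and a dilate of $K$, then transfer the fixed cover of $K$ across the sandwich. The only cosmetic differences are that the paper normalises to $L\subseteq K\subseteq \delta L$ and substitutes directly (obtaining ratio $\sqrt{\lambda}$ with $\delta=1/\sqrt{\lambda}$), whereas you first scale the cover by $\lambda$ and then substitute $K\subset T(L)$, which yields the slightly sharper threshold $\delta=1/\Lambda$.
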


\begin{proof}
Let $m=i(K)$. Then $K$ is covered by $v_1+\lambda K,\ldots,v_m+\lambda K$ for some $0<\lambda<1$ and some translation vectors $v_1,\ldots,v_m$.
Let $\delta=1/\sqrt{\lambda}$. We may assume that $L\subseteq K\subseteq \delta L$. Then $L$ is covered by $v_1+\sqrt{\lambda} L,\ldots,v_m+\sqrt{\lambda}L$.
\end{proof}


\begin{rem}\label{rem:dbody}
Let $C\subset\Ren$ be a convex body and $X\subset\Ren$ a set for which $X\subseteq\Bp(X)$. Then $\inter\left( \Bp(X)\right)\neq\emptyset$. To see this, one may assume that $X$ is convex, and then show that any point in the relative interior of $X$ is an interior point of $\Bp(X)$.
It follows that the members of $\D$ and $\DT$ are bodies.
\end{rem}

\begin{proof}[Proof of Theorem~\ref{thm:easyStability}]
By the semi-continuity of $i$ on $\KK^n$ (Proposition~\ref{prop:semicont}) and the compactness of $\KK^n$, it is sufficient to show that
$\DT$ and $\D$ are closed subsets of $\KK^n$.
Let $K_1,K_2,\ldots$ be a convergent sequence of convex bodies in $\DT$. Then, by John's theorem, each one has an affine image $K_i'$ such that
$B^n\subseteq K_i'\subseteq nB^n$. Now, $K_1',K_2',\ldots$ is a sequence of convex bodies in $\DT$ which is convergent
with respect to the Hausdorff distance. By Proposition~\ref{prop:limitofsets}, the limit is also in $\DT$. The statement for $\D$ easily follows.
\end{proof}

\begin{proof}[Proof of Theorem~\ref{thm:hardStability}]
By Theorem~\ref{thm:easyStability}, we need to prove that for any $\varepsilon>1$ there is a $\delta>1$ such that if
$d(C,D)<\delta$, then for every $L\in\D[D]$ there is a
$K\in\D$ with $d(K,L)<\varepsilon$.

Let $D_k$ be a sequence of convex bodies in $\Ren$ such that $d(C,D_k)<1+\frac{1}{k}$ and let $X_k$ be a sequence of sets such that $L_k:=\Bp[C_k](X_k)\in\D[C_k]$. 
Suppose, for contradiction, that for each
$L_k$ the closest member of $\D$ is of distance at least $\mu>1$.
By compactness, we may choose a convergent subsequence of the $X_k$s.
By taking a subsequence again, we may assume that the $L_k$s converge, too.
Now, by Proposition~\ref{prop:limitofsets} (using John's theorem, as in the proof of Theorem~\ref{thm:easyStability}), the limit of these $L_k$s is a member of $\D$, a contradiction.
\end{proof}

\noindent {\bf Acknowledgements.} We are grateful to Antal Jo\'os and Steven Taschuk
for the valuable conversations that
we had with them on various topics covered in these notes.

\bibliographystyle{amsplain2}
\bibliography{biblio}
\end{document}